\def\ep{{\varepsilon}}
\def\R{\mathbb R}
\def\rac{\sqrt{1+B^2}}
\newtheorem{theo}{\textbf{Theorem}}[section]
\newtheorem{lem}[theo]{\textbf{Lemma}}
\newtheorem{prop}[theo]{\textbf{Proposition}}
\newtheorem{rem}[theo]{\textbf{Remark}}
\newtheorem{ass}[theo]{\textbf{Assumption}}
\def\signma{\bigskip \begin{center} {\sc Matthieu Alfaro\par\vspace{3mm}
      Universit\'e Montpellier\par
      IMAG,CC051, Place Eug\`ene Bataillon, 34095 Montpellier Cedex 5, France
      \par\vspace{3mm} e-mail:}
    \tt{matthieu.alfaro@umontpellier.fr} \end{center}}
\def\signhb{\bigskip \begin{center} {\sc Henri Berestycki\par\vspace{3mm}
      EHESS, PSL Research University, CNRS,\par
      Centre d'analyse et mathématique sociales, Paris, 190-198 avenue de France,
75013 - Paris, France 
      \par\vspace{3mm} e-mail:}
    \tt{hb@ehess.fr} \end{center}}
\def\signgr{\bigskip \begin{center} {\sc Ga\"el
      Raoul\par\vspace{3mm}
      CMAP, Ecole Polytechnique, CNRS, Université Paris-Saclay, Route de Saclay, 91128 Palaiseau cedex, France.\par
      \par\vspace{3mm} e-mail:}
    \tt{raoul@cmap.polytechnique.fr} \end{center}}
\title{The effect of climate shift on a species submitted to dispersion, evolution, growth and nonlocal competition}
\begin{document}

\maketitle

\begin{center}
\author{M. Alfaro, H. Berestycki,  G. Raoul}
\end{center}

%\begin{center}
%{\large\bf Matthieu Alfaro \footnote{IMAG, Universit\'e Montpellier, CC051, 34095 Montpellier, France. E-mail:
%matthieu.alfaro@umontpellier.fr}, Henri Berestycki \footnote{EHESS,
%CAMS, 54 Boulevard Raspail, 750006 Paris, France. E-mail:
%hb@ehess.fr} and Ga\"el Raoul \footnote{CEFE, UMR 5175, CNRS, 1919
%Route de Mende, 34293 Montpellier, France.
%E-mail: raoul@cefe.cnrs.fr}}\\
%[2ex]
%
%\end{center}
%
%\vspace{15pt}

\tableofcontents

\vspace{10pt}

\begin{abstract} We consider a population structured by a space
variable and a phenotypical trait, submitted to dispersion,
mutations, growth and nonlocal competition. We introduce the
climate shift due to {\it Global Warming} and discuss the dynamics
of the population by studying the long time behavior of the
solution of the Cauchy problem. We consider three sets of
assumptions on the growth function. In the so-called {\it confined
case} we determine a critical climate change speed for the
extinction or survival of the population, the latter case taking place by \lq\lq strictly following the climate shift''. In the so-called {\it
environmental gradient case}, or {\it unconfined case}, we additionally determine the propagation speed
of the population when it survives: thanks to a combination of migration and evolution, it can here be different from the speed of the climate shift. Finally, we consider {\it mixed scenarios}, that are complex situations, where the
growth function satisfies the conditions of the confined case on the right, and the conditions of the unconfined case on the left.

The main difficulty comes from the nonlocal competition term that prevents
 the use of classical methods based on comparison arguments. This difficulty
  is overcome thanks to estimates on the tails of the solution, and a careful  application of the parabolic Harnack inequality.\\

\noindent{\underline{Key Words:} structured population, nonlocal
reaction-diffusion equation, propagation, parabolic Harnack inequality.}\\

\noindent{\underline{AMS Subject Classifications:} 35Q92, 45K05,
35B40.}
\end{abstract}

\section{Introduction}\label{s:intro}

We consider a density of population $n(t,x,y)$ at time $t\geq 0$,
structured by a spatial variable $x\in \mathbb R$ and a phenotypic
trait $y\in \mathbb R$.  The population  is  submitted to four
essential processes: spatial dispersion, mutations, growth and
competition. It also has to face the climate shift induced by {\it
Global Warming}. The spatial dispersion and the mutations are
modelized by diffusion operators. We assume that the growth rate
of the population, if the competition is neglected, depends
initially ($t=0$) on both the location $x$ and the phenotypic trait
$y$. Then, in order to take into account the climate shift, we
assume that for later times ($t>0$) the conditions are shifted in
space, at a given and forced speed $c$ (without loss of
generality, we will always consider $c\geq 0$). Hence, the growth
rate is given by $r(x-ct,y)$, which is typically negative outside
a strip centered on the line $y-B(x-ct)=0$ (see the {\it
environmental gradient case} below, and notice that we also study other situations). This corresponds to a population living
in an environmental cline: to survive at time $t$ and location $x$,
an individual must have a trait close to the optimal trait
$y_{opt}(t,x)=B(x-ct)$ which is shifted by the climate (one may
think of $x$ being the latitude). Finally, we consider a logistic
regulation of the population density that is local in the spatial
variable  and nonlocal in the trait. In other words,  we consider
that there exists an intra-specific competition (for e.g. food) at
each location, which may depend on the traits of the competitors.
The model under consideration is then (after a rescaling in $t$,
$x$ and $y$)
\begin{equation}\label{eq_n}
 \left\{\begin{array}{lll}
          \partial_t n(t,x,y)-\partial_{xx} n(t,x,y)-\partial_{yy} n(t,x,y)&&\vspace{3pt}\\
      \quad\quad =\displaystyle \left(r(x-ct,y)-\int_{\mathbb
R}K(t,x,y,y')n(t,x,y')\,dy'\right)n(t,x,y) &&\textrm{ for }(t,x,y)\in\R_+\times \R^2,\vspace{5pt}\\
      n(0,x,y)=n_0(x,y) &&\textrm{ for }(x,y)\in \R^2,
         \end{array}
\right.
\end{equation}
with precise assumptions to be stated later.

In this paper our aim is to determine conditions that imply extinction of the population and the
ones that imply its survival, or even its propagation. Typically,
%under Assumption \ref{hyp-confined}, 
we expect the existence of a
critical value $c^*>0$ for the forced speed $c$ of the climate
shift: the population goes extinct (in the sense that it can not
adapt or migrate fast enough to survive the climate shift) when $c\geq c^*$ and
survives, by following the climate shift and/or thanks to an adaptation of the individuals to the changing climate, when $0\leq c<c^*$. 
%Under Assumption \ref{hyp-unconfined} we expect the existence of
%$c^{**}>0$ such that the population goes extinct when $c\geq
%c^{**}$ and invades the environment when $0\leq c<c^{**}$.
To confirm these scenarios, we shall study the long time behavior of
a global nonnegative solution $n(t,x,y)$ of the Cauchy problem
\eqref{eq_n}.

The model \eqref{eq_n} can be seen as a reaction-diffusion equation with a monostable reaction term. Solutions of such equation typically propagate in space at a linear speed, that can often be explicitly determined. In some models, such as the Fisher-KPP equation \cite{Fisher}, \cite{KPP}, it is actually possible to push the analysis beyond the propagation speed: one can for instance describe the convergence of the population to a travelling wave, see \cite{Bramson1,Bramson2}. The analysis of \eqref{eq_n} is however much more involved, in particular because of the nonlocal competition term it contains. We will then focus our analysis on the qualitative properties of the solutions, based on the notion of spreading introduced in \cite{Aronson}.

%It is for instance the case for the celebrated Fisher-KPP equation (see \cite{Fisher,KPP}). 
%
%
%is known to produce travelling-wave solutions. Indeed, the study of such models, which started with the Fisher-KPP equation (see \cite{Fisher,KPP}), gave rise to an important literature. In \cite{Aronson}, Aronson and Weinberger have introduced the notion of \emph{spreading speed}, which describe the propagation of the population. This notion It is this point of view that we will adopt here.

In this paper we investigate three main types of problems giving rise to qualitatively different behaviors.
These correspond to different assumptions about the region where the growth function $r$ is positive. We now define these various cases and state some of the main results we obtain for each one of them.

\subsection{The confined case}

In the {\it confined case}, the growth function can be positive only in a bounded (favorable) region of the $(x,y)$ plane. The precise assumption is as follows.

\begin{ass}[Confined case]\label{hyp-confined}
For all $\delta >0$, there is $R>0$ such that
\begin{equation}\nonumber
r(x,y)\leq -\delta \; \text{ for almost all  $(x,y)$ such that
$|x|+|y|\geq R$}.
\end{equation}
\end{ass}

The main results in this case are given in Proposition \ref{prop:extinction},
Proposition~\ref{prop:extinction2} and in Theorem~\ref{th:survival}. Essentially these state that there is a {\em critical speed} $c^*$ such that when the climate change speed $c$ is such that $c<c^*$, then the population persists by keeping pace with the climate change, whereas when $c>c^*$, the population becomes extinct as time goes to infinity. 
We obtain the critical speed from an explicit (generalized) eigenvalue problem. We explain these notions in  subsection~\ref{subsection:eigenvalue_pb}.

\subsection{The environmental gradient case}

In ecology, an {\em environmental gradient} refers to a gradual change in various factors in space that determine the favored phenotypic traits. Environmental gradients can be related to factors such as altitude, temperature, and other environment characteristics. In our framework this case is defined by the following condition.

\begin{ass}[Environmental gradient case, or unconfined case]\label{hyp-unconfined} We assume that $r(x,y)=\bar
r(y-Bx)$ for some $B>0$ and some function $\bar r \in L^\infty
_{loc}(\R)$ such that, for all $\delta >0$, there is a $R>0$ such
that
\begin{equation}\nonumber
\bar r(z)\leq -\delta \; \text{ for almost all $z$ such that
$|z|\geq R$.}
\end{equation}
\end{ass}

Note that in this case, without climate change, the favorable region, where $r(x,y)$ can be positive, spans an unbounded slab, in contradistinction with the confined case where it is bounded.

The main results in this case are given in
Proposition~\ref{prop:extinction-unconfined} and Theorem~\ref{th:invasion}. As in the confined case, there is a critical speed $c^{**}$ for the climate shift --- obtained from a generalized eigenvalue problem, and even explicitly computable in some simple
situations, see formula \eqref{formula-critical-speed}--- which separates extinction from survival/invasion. Nevertheless, let us emphasize on a main difference with the confined case:  the population does not necessarily keep pace 
with  the climate but  may persist thanks to a combination of migration and evolution, see Remark~\ref{rem:migration-evolution}. To shed light on this phenomenon, in Theorem~\ref{th:invasion} we further  identify the propagation speed of a population in an environmental gradient when $c<c^{**}$, which we believe to be important for further investigations. 

\medskip

Before going further, let us present simple but meaningful examples of such growth functions:
\begin{equation}\label{r-quadratic}
 r_\ep(x,y):=1-A(y-Bx)^2-\ep x^2,
\end{equation}
for some $A>0$, $B>0$, $\ep \geq 0$. At every spatial location
$x\in\mathbb R$, the optimal phenotypic trait (i.e. the phenotypic
trait that provides the highest growth rate) is $y_{opt}=Bx$, and
the constant $B$ represents the linear variation on this optimal
trait in space. The constant $A$ characterizes the quadratic
decrease of the growth rate $r$ away from the optimal phenotypic
trait. Finally, the constant $\ep$ describes how the optimal
growth rate varies in space: if $\ep>0$,  we assume that an individual originating from a given region can adapt to warmer temperatures induced by the climate shift, but will not, nevertheless, be as successful as it
was originally (in the sense that its growth decreases).
  When $\ep>0$, $r_\ep$ given by \eqref{r-quadratic}
satisfies Assumption \ref{hyp-confined} and the population, if it
survives, cannot invade the whole $(x,y)$ plane (confined case).
This situation will be discussed in Section \ref{s:survival}. On the other
hand, for $\ep =0$, $r_0$ given by \eqref{r-quadratic} satisfies
Assumption \ref{hyp-unconfined} and the possibility of propagation
remains open (unconfined case). This situation will be discussed in Section
\ref{s:propagation}.

\subsection{The mixed case}
Finally, we introduce here a more complex situation, that combines 
the two previous cases. We call it the {\it mixed case}.  It is defined by the following condition.

\begin{ass}[Mixed case]\label{hyp-mixed} We have
$$
r(x,y)=\mathbf 1_{\R_-\times \R}(x,y)r_u(x,y)+\mathbf
1_{\R_+\times \R}(x,y)r_c(x,y),
$$
 where $r_c$ satisfies Assumption \ref{hyp-confined} and $r_u$
 satisfies Assumption \ref{hyp-unconfined}.
\end{ass}

Thus, in this case, the growth
function satisfies the assumption of the environmental gradient case
 for $x\leq 0$, and the assumption of the confined case for $x\geq 0$. A typical example is given by
\begin{equation}\label{example-mixed}
 r(x,y)=1-A(y-B{x_-}-B'{x_+})^2-\ep {x_+}^2,
\end{equation}
for some $A>0$, $B>0$, $B'>0$ and $\ep>0$. Note that
we have used the notations $x_-=\min(x,0)$, $x_+=\max(x,0)$.

\medskip

Our main results in this case are given in subsection~\ref{subsection:theomixed}. 
We show that one can extend the critical speeds, $c^*$ and $c^{**}$, obtained in the  previous two cases.
We prove that the population persists if the climate change speed is below either one of these critical speeds whereas it goes extinct when it is above both. This is stated in details  in Theorem~\ref{th:mixed} where we also describe more precisely the large time behavior of the population, depending on the position of $c$ w.r.t. $c^{*}$ and $c^{**}$. Furthermore, a new and interesting phenomenon arises in this case: when $c^*$ and $c^{**}$ are close, then, as $c$ varies, the dynamics of the population can rapidly change from fast expansion to extinction, see Remark~\ref{remint}.

\subsection{Estimating the nonlocal competition term}\label{ss:estimating}

When studying the extinction cases, it follows from the parabolic
comparison principle that we can neglect the nonlocal term
$\int_{\mathbb R}K(t,x,y,y')n(t,x,y')\,dy'$ in \eqref{eq_n}. On
the contrary, careful estimates on this nonlocal term are
necessary to study the survival and propagation phenomena. The
strategy consists in first proving estimates on the tails of the
solutions, which provides a control of the nonlocal term for large
$x$, $y$. Next, on the remaining compact region, a rough uniform
bound on the mass $\int _\R n(t,x,y)\,dy$ enables us to apply an argument based on the 
parabolic Harnack inequality for linear equations with bounded
coefficients, and therefore to control the nonlocal term. This idea is similar to the method developed in \cite{Alf-Cov-Rao1} to study travelling wave solutions of a related problem. In this previous work however, the solution was time independent, and we could use the elliptic Harnack inequality. In the present work, an additional difficulty arises: for parabolic equations, the Harnack inequality involves a necessary time shift, see Remark~\ref{Rk:Harnack}. Nevertheless, we show  in subsection~\ref{subsec:harnack} that if the solution $u$ of a parabolic Harnack inequality is uniformly bounded (which, in our situations, will be proved in Lemma~\ref{lem:tails} and Lemma \ref{lem:tails2}), then for any $\bar t>0$, $\bar x \in \R ^N$, $R>0$ and $\delta>0$, there exists $C>0$ such that the solution $u$  satisfies
\[\max_{x\in B(\bar x,R)}u(\bar t, x)\leq C\min_{x\in B(\bar x,R)}u(\bar t, x)+\delta,
\]
thus getting rid of the time shift. This refinement of the parabolic Harnack inequality is a very efficient tool for our analysis, since used in the proofs of  Lemma \ref{lem:tails} (exponential decay of
tails), Theorem \ref{th:survival} (survival in the confined
case), Theorem \ref{th:invasion} (survival and invasion in the unconfined case), Theorem \ref{th:mixed} (iii) (survival and
invasion in the mixed case). Since we believe that such rather involved technics are also of independent interest for further utilizations, we present them as a separate result in Theorem~\ref{th:harnack-mod} for a  general parabolic equation.
\subsection{Related works and comments}
%\noindent{\bf Related works and comments.} 
In
\cite{Ber-Die-Nag-Zeg}, a model has been introduced to study the
effect of {\it Global Warming} on a species, when evolutionary
phenomena are neglected, that is when all individuals are assumed
to be  identical (see also \cite{PL}, \cite{Ber-Ros2, Ber-Ros3}).
Among more detailed results, it is shown that there exists a
critical speed $c^*$ such that the population survives if and only
if the climate change occurs at a speed slower than $c^*$.
However, it is well documented that species adapt to local
conditions, see e.g. \cite{SPK}, and in particular to the local
temperatures. Two closely related models taking into account this
heterogeneity of the population have been proposed in \cite{Pease}
and \cite{KB} (see e.g. \cite{PBM}, \cite{MR}, \cite{DMCKR} for
recent results). The models of \cite{Pease}, \cite{KB} describe
the evolution of the population size and its mean phenotypic
trait, and can be derived formally from a structured population
model similar to \eqref{eq_n}, provided the population reproduces
sexually (see \cite{MR}). Such simplified models do not exist for
asexual populations, so that one has to consider \eqref{eq_n} in
this latter case. In this framework, let us mention the
construction of travelling waves \cite{Alf-Cov-Rao1} for equation
\eqref{eq_n}, and \cite{ber-jin-sil} for a related but different
model, when there is no climate shift ($c=0$). Notice also that
the model \eqref{eq_n} can be derived as a limit of stochastic
models of finite populations \cite{CM}.

 The well-posedness of a Cauchy problem
very similar to \eqref{eq_n}, but on a bounded domain, has been
studied in \cite[Theorem I.1]{Prevost}, under reasonable
assumptions on the coefficients. We believe a similar argument
could be used here to show the existence of a unique solution
$n_R=n_R(t,x,y)$, for $(x,y)\in[-R,R]^2$. The existence and
uniqueness of solutions to \eqref{eq_n} could then be obtained
through a limit $R\to\infty$, thanks to the estimates on the tails
of the solutions obtained in Lemma~\ref{lem:tails}. We have
however chosen to focus on the qualitative properties of the
solutions in this article.

The main difficulty in the mathematical analysis of \eqref{eq_n}
is to handle the nonlocal competition term. When the competition
term is replaced by a local (in $x$ and $y$) density regulation,
many techniques based on the comparison principle --- such as some
monotone iterative schemes or the sliding method --- can be used
to get, among other things, monotonicity properties of the
solution. Since integro-differential equations with a nonlocal
competition term do not satisfy the comparison principle, it is
unlikely that such techniques apply here. Problem \eqref{eq_n}
shares this difficulty with the nonlocal Fisher-KPP equation
\begin{equation}\label{nonlocal-kpp}
\partial _t n (t,x)-\partial _{xx}n(t,x)=\left(1-\int _{\R} \phi(x-y)n(t,y)\,dy\right)n(t,x),
\end{equation}
which describes a population structured by a spatial variable
only,  and submitted to nonlocal competition modelized by the
kernel $\phi$. As far as equation \eqref{nonlocal-kpp} is
concerned, let us mention the possible destabilization of the
steady state $u\equiv 1$ by some kernels \cite{GVA}, the
construction of travelling waves \cite{Ber-Nad-Per-Ryz},
additional properties of these waves \cite{Fan-Zha},
\cite{Alf-Cov}, and a spreading speed result \cite{HR}. We also
refer to \cite{A-Cov-Rao2}, \cite{Gri-Rao} for the construction of travelling
waves for a bistable nonlocal equation, for an epidemiological  system with mutations respectively. 
\medskip

%\noindent{\bf Goal and organization.} 
\subsection{Mathematical assumptions, and organization of the paper}
%The organization of the present paper is as follows.

Throughout the paper we always assume the following on the coefficients of the nonlocal reaction diffusion equation \eqref{eq_n}: $r\in
L^\infty_{loc}(\mathbb R ^2)$ and there exists $r_{max}>0$ such
that
\begin{equation}\label{Assumption_r}
 r(x,y)\leq r_{max}\; \text{ a.e. in } \R ^2;
\end{equation}
also $K\in L^\infty((0,\infty)\times \R ^3)$ is bounded from above
and from below, in the sense that there are $k^->0$, $k^+>0$ such
that
\begin{equation}\label{Assumption_K}
 k^-\leq K(t,x,y,y')\leq  k^+ \;\text{ a.e. in }  (0,\infty)\times
\R ^3.
\end{equation}

Moreover, we consider initial conditions $n_0(x,y)$ for which there
exists $C_0>0$ and $\mu _0>0$ such that
\begin{equation}\label{initial-data}
0\leq n_0(x,y)  \begin{cases}\leq C_0 e^{-\mu_0 (|x|+|y|)} &\text{
under Assumption \ref{hyp-confined}}\vspace{3pt}
\\
\leq C_0 e^{-\mu_0|y-Bx|} &\text{ under Assumption
\ref{hyp-unconfined}}\\
\text{is compactly supported} &\text{ under Assumption
\ref{hyp-mixed}}.
\end{cases}
\end{equation}
In other words, under Assumptions \ref{hyp-confined} or \ref{hyp-unconfined}, we
allow the initial data to have tails which are \lq\lq consistent''
with the case under consideration. In the mixed case, i.e.
Assumption \ref{hyp-mixed}, for the sake of simplicity, we assume
that the initial data is compactly supported.

\medskip

The organization of this work is as follows. In Section
\ref{s:tails} we provide some linear material (principal
eigenvalue, principal eigenfunction), a preliminary estimate of
the tails of $n(t,x,y)$ together with an efficient Harnack tool which is also
of independent interest. The confined case is studied in Section
\ref{s:survival}: we identify the critical speed $c^*$ and,
depending on $c$, prove extinction or survival. The unconfined
case is studied in Section \ref{s:propagation}: we identify the
critical speed $c^{**}$ and, depending on $c$, prove extinction or
propagation. Finally, Section \ref{s:mixed} is devoted to the
analysis of the mixed case, for which we take advantage of the analysis of the confined and unconfined cases, performed in the two previous sections.

\section{Preliminary results}\label{s:tails}

Let us first introduce a principal eigenvalue problem that will be
crucial in the course of the paper. It will in particular provide
the critical climate shift speeds $c^*$, $c^{**}$, $c^{**}_u$ (see
further) that will allow the survival of the population.

\subsection{A principal eigenvalue problem}\label{subsection:eigenvalue_pb}

The theory of generalized principal eigenvalue developed in
\cite{ber-nir-var} is well adapted to the present problem,
provided $r$ is bounded. Following \cite{ber-nir-var}, we can then
define, for $r\in L^\infty(\Omega)$ and $\Omega\subset \mathbb
R^2$ not necessarily bounded, the generalized principal eigenvalue
\begin{equation}\label{def:lambda}
 \lambda(r,\Omega):=\sup\left\{\lambda:\,\exists \phi\in W^{2,2}_{loc}(\Omega),\,\phi>0,\, \partial_{xx}\phi(x,y)+\partial_{yy}\phi(x,y)+
 \left(r(x,y)+\lambda\right)\phi(x,y)\leq 0\right\}.
\end{equation}
As shown in \cite{ber-nir-var}, if $\Omega$ is bounded and smooth,
$\lambda(r,\Omega)$ coincides with the Dirichlet principal
eigenvalue $H(r,\Omega)$, that is the unique real number such that
there exists $\phi>0$ on $\Omega$ (unique up to multiplication by a scalar),
$$\left\{\begin{array}{ll}
          -\partial_{xx}\phi(x,y)-\partial_{yy}\phi(x,y)-r(x,y)\phi(x,y)
=H(r,\Omega)\phi&\textrm{ a.e. in }\Omega,\\
\phi=0&\textrm{ on }\partial \Omega.
         \end{array}
\right.
$$
Notice that since the operator is self-adjoint, the Dirichlet principal eigenvalue can be obtained through the variational formulation
$$
H(r,\Omega)=\inf_{\phi\in H^1_0(\Omega),\;\phi\neq 0}\frac{\int_\Omega |\nabla \phi|^2-r(x,y)\phi^2}{\int_\Omega \phi^2}.
$$

The following proposition then provides known properties of
$\lambda(r,\Omega)$. We refer the reader to \cite{ber-nir-var},
\cite[Proposition 4.2]{ber-ham-ros}, or to \cite[Proposition
1]{Ber-Ros2} for more details and proofs.

\begin{prop}[Generalized eigenvalues and eigenfunctions]\label{prop:lambda}
Assume that $r\in L^\infty(\mathbb R^2)$. There is $\lambda(r,\Omega)\in\mathbb R$ such that for any subsequence $(\Omega_n)_{n\in\mathbb N}$ of non empty open sets
such that
$$\Omega_n\subset \Omega_{n+1},\quad \cup_{n\in\mathbb N}\Omega_n=\Omega.$$
Then, $\lambda(r,\Omega_n)\searrow \lambda(r,\Omega)$ as
$n\to\infty$. Furthermore, there exists a generalized principal
eigenfunction, that is a positive function $\Gamma \in
W^{2,2}_{loc}(\R ^2)$ such that
\begin{equation*}
-\partial_{xx}\Gamma(x,y)-\partial_{yy}\Gamma(x,y)-r(x,y)\Gamma(x,y)
=\lambda(r,\Omega)\Gamma(x,y)\quad \textrm{a.e. in }\Omega.
\end{equation*}
\end{prop}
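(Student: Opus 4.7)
The plan is to reduce to the case of an exhausting sequence of smooth bounded approximations of $\Omega$, obtain Dirichlet principal eigenpairs on each of them, and then pass to the limit via an interior Harnack-compactness argument to produce the generalized eigenfunction $\Gamma$.

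First I would show that $\lambda(r,\Omega)$ is a finite real number and that $n\mapsto \lambda(r,\Omega_n)$ is monotone nonincreasing. The constant test function $\phi\equiv 1$ together with $r\in L^\infty$ yields the lower bound $\lambda(r,\Omega)\geq -\|r\|_\infty$. For an upper bound, restricting an arbitrary admissible $\phi$ to any smooth bounded $\omega\Subset \Omega$ produces a positive supersolution for $-\Delta-r-\lambda$ on $\omega$, and the characterization of the Dirichlet eigenvalue via supersolutions forces $\lambda\leq H(r,\omega)<+\infty$. Monotonicity is immediate from the very form of the supremum in \eqref{def:lambda}: if $\Omega_n\subset\Omega_{n+1}$, any $\phi$ admissible on $\Omega_{n+1}$ restricts to an admissible $\phi$ on $\Omega_n$, so $\lambda(r,\Omega_n)\geq \lambda(r,\Omega_{n+1})\geq \lambda(r,\Omega)$. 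Hence $\lambda(r,\Omega_n)$ decreases to some limit $\ell\geq \lambda(r,\Omega)$, and I must prove the reverse inequality while simultaneously building~$\Gamma$.

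The construction goes as follows. Up to replacing each $\Omega_n$ by a slightly smaller smooth bounded subdomain (still exhausting $\Omega$), I may assume each $\Omega_n$ is smooth and bounded, so $\lambda(r,\Omega_n)=H(r,\Omega_n)$ admits a positive Dirichlet principal eigenfunction $\phi_n$. I normalize $\phi_n(x_0,y_0)=1$ at a fixed point $(x_0,y_0)\in \Omega_1$. Since $r\in L^\infty$ and the eigenvalues $H(r,\Omega_n)$ stay in a bounded set, the interior Harnack inequality for linear elliptic operators with bounded coefficients, chained along balls compactly contained in $\Omega_n$, provides for every compact $K\subset \Omega$ constants $0<c(K)\leq C(K)$ with $c(K)\leq \phi_n\leq C(K)$ on $K$ for all $n$ large enough. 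Standard $L^p$ interior regularity then gives a uniform bound of $\phi_n$ in $W^{2,p}_{\mathrm{loc}}(\Omega)$ for every finite $p$. A diagonal extraction along an exhaustion of $\Omega$ by compacts produces a subsequence converging in $C^{1,\alpha}_{\mathrm{loc}}(\Omega)$ and weakly in $W^{2,2}_{\mathrm{loc}}$ to a function $\Gamma>0$ satisfying $-\partial_{xx}\Gamma-\partial_{yy}\Gamma-r\Gamma=\ell\,\Gamma$ almost everywhere in $\Omega$.

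Finally, plugging this $\Gamma$ into the definition \eqref{def:lambda} shows that $\ell$ itself is admissible in the supremum, whence $\lambda(r,\Omega)\geq \ell$; combined with the reverse inequality from the first step this gives $\lambda(r,\Omega)=\ell=\lim_n\lambda(r,\Omega_n)$ and delivers the eigenfunction at the same time. I expect the main obstacle to be the uniform Harnack step, since the $\phi_n$ live on different (possibly enlarging) domains, $\Omega$ itself may be unbounded, and one must organize the chaining of Harnack balls so that they eventually sit inside every $\Omega_n$ while connecting an arbitrary compact of $\Omega$ to the normalization point $(x_0,y_0)$. The hypothesis $r\in L^\infty$ is what makes the Harnack constants, and the $W^{2,p}$ regularity used to pass to the limit in the PDE, uniform in $n$.
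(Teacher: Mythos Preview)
The paper does not actually prove this proposition: it is stated as a known result, with the reader referred to \cite{ber-nir-var}, \cite[Proposition~4.2]{ber-ham-ros} and \cite[Proposition~1]{Ber-Ros2} for details and proofs, and the only further comment is that ``the above generalized eigenfunction $\Gamma$ is indeed obtained as a limit of principal eigenfunctions, with Dirichlet boundary conditions, on increasing bounded domains.'' Your proposal is precisely this standard argument from those references --- Dirichlet eigenpairs on a smooth bounded exhaustion, normalization at a fixed interior point, interior Harnack chaining for locally uniform two-sided bounds, $W^{2,p}$ interior estimates, diagonal extraction, and identification of the limit eigenvalue via the definition \eqref{def:lambda} --- so it matches exactly what the paper invokes.
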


Let us also mention that the above generalized eigenfunction $\Gamma$ is indeed obtained as a limit of principal eigenfunctions, with Dirichlet
boundary conditions, on increasing bounded domains.

Since our growth functions are only assumed to be bounded from
above, we extend definition \eqref{def:lambda} in a natural way to $r\in
L^\infty_{loc}(\Omega)$ such that $r\leq r_{max}$ on $\Omega$, for
some $r_{max}>0$. The set
$$\Lambda(r,\Omega):=\left\{\lambda:\,\exists \phi\in W^{2,2}_{loc}(\Omega),\,\phi>0,
 \,\partial_{xx}\phi(x,y)+\partial_{yy}\phi(x,y)+\left(r(x,y)+\lambda\right)\phi(x,y)\leq 0\right\}$$
is not empty, since
$\Lambda(\max(r,-M),\Omega)\subset\Lambda(r,\Omega)$, and is
bounded from above, thanks to the monotony property of
$\Omega\mapsto\Lambda(r,\Omega)$. Finally, going back for example
to the proof of  \cite[Proposition 4.2]{ber-ham-ros}, we notice that
Proposition \ref{prop:lambda} remains valid under the weaker
assumption that $r\in L^\infty_{loc}(\Omega)$ is bounded from
above.

\medskip

It follows from the above discussion that, in the confined case,
we are equipped with the generalized principal eigenvalue $\lambda
_\infty \in \R$, and a generalized eigenfunction  $\Gamma
_\infty(x,y)$ such that $\Gamma_\infty(x)\to 0$ as $|x|\to\infty$ and
\begin{equation}\label{vp-pb}
\begin{cases}
-\partial_{xx}
\Gamma_\infty(x,y)-\partial_{yy}\Gamma_\infty(x,y)-r(x,y)\Gamma_\infty(x,y)=
\lambda_\infty\Gamma_\infty(x,y)  \quad\text{ for all  } (x,y)\in \R^2\\
\Gamma_\infty(x,y) >0\quad \text{ for all } (x,y)\in \R^2,\quad
\Vert \Gamma _\infty \Vert _{L^\infty(\R^2)}=1.
 \end{cases}
 \end{equation}
 Notice  that since the operator is self-adjoint and the potential \lq\lq confining'', $\lambda _\infty$ can also be obtained through some adequate variational formulation.
 
For the unconfined case, for which $r(x,y)=\bar r(y-Bx)$, we are
equipped with the \lq\lq one dimensional'' generalized principal eigenvalue
$\lambda _\infty \in \R$, and a generalized eigenfunction  $\Gamma
_\infty^{1D}(z)$ such that
\begin{equation}\label{vp-pb2}
\begin{cases}
-(1+B^2)\partial_{zz}\Gamma_\infty^{1D}(z)-\bar r(z)\Gamma_\infty
^{1D}(z)=
\lambda_\infty\Gamma_\infty^{1D}(z)  \quad\text{ for all  } z\in \R\\
\Gamma_\infty^{1D}(z) >0\quad \text{ for all } z\in \R,\quad \Vert
\Gamma _\infty^{1D} \Vert _{L^\infty(\R)}=1.
 \end{cases}
 \end{equation}
 Indeed, this corresponds to the confined case in 1D.  In order to be consistent, if we define
 $\Gamma _\infty (x,y):=\Gamma _\infty ^{1D}(y-Bx)$ then
 \eqref{vp-pb} remains valid.

\begin{rem} When the unconfined growth rate is the prototype example \eqref{r-quadratic}
with $\ep=0$, \eqref{vp-pb2} corresponds to the harmonic oscillator, for which the the principal eigenvalue and principal eigenvector are known (see e.g. \cite{Schwartz}):
\begin{equation}\label{lambda-quadratic}
 \lambda_\infty=\sqrt{A(1+B^2)}-1,\quad \Gamma_\infty(x,y)=\exp\left(-\frac 12\sqrt{\frac A{1+B^2}}(y-Bx)^2\right).
\end{equation}

 If the confined growth rate is the prototype example \eqref{r-quadratic}
with $\ep>0$, the principal eigenvalue $\lambda _\infty ^\ep$ can
also be explicitly computed, but the formula is more complicated.
One can however notice that we then have  $\lambda _\infty ^\ep
\to \lambda _\infty=\sqrt{A(1+B^2)}-1$, as $\ep \to 0$.
\end{rem}

\subsection{Preliminary control of the tails}

Our first result states that, in the confined and unconfined cases,
any global nonnegative solution of the Cauchy problem \eqref{eq_n}
has exponentially decaying tails. Notice that, in the mixed case,
Lemma \ref{lem:tails2} will provide an analogous estimate.

\begin{lem}[Exponential decay of tails]\label{lem:tails}
Assume that $r\in L^\infty_{loc}(\R^2)$ and $K\in
L^\infty((0,\infty)\times\R^3)$ satisfy \eqref{Assumption_r} and
\eqref{Assumption_K} respectively. Let Assumption
\ref{hyp-confined} or \ref{hyp-unconfined} hold. Assume that $n_0$
satisfies \eqref{initial-data}. Then, there exist $C>0$ and $\mu
>0$ such that, for any global nonnegative solution $n$ of
\eqref{eq_n},
\begin{equation}\label{tails} 0\leq n(t,x,y)\leq \begin{cases}
C e^{-\mu (|x-ct|+|y|)} &\text{ under Assumption \ref{hyp-confined},}\vspace{3pt}\\
Ce^{-\mu|y-B(x-ct)|} &\text{ under Assumption
\ref{hyp-unconfined}},
\end{cases}
\end{equation}
for all $t\geq 0$, $x\in \R$, $y \in \R$.
\end{lem}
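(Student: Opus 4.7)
My plan rests on three observations. First, the nonlocal term $\int_\R Kn\,dy'$ in \eqref{eq_n} is nonnegative, so $n$ is automatically a subsolution of the linear parabolic equation $\partial_t m-\Delta m=r(x-ct,y)m$. Second, integrating \eqref{eq_n} in $y$ and using \eqref{Assumption_r}--\eqref{Assumption_K} yields a Fisher--KPP-type inequality for the partial mass $N(t,x):=\int_\R n(t,x,y)\,dy$, from which one can extract a uniform $L^\infty$ bound first on $N$ and then on $n$ itself. Third, in the unfavorable region where $r\leq-\delta$, an explicit exponentially decaying function is a supersolution of the linearized moving-frame equation. The proof then combines a Harnack-based uniform pointwise bound on $n$ in the favorable region with an exponential supersolution comparison outside it.

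Concretely, I first integrate \eqref{eq_n} in $y$ to obtain
\[
\partial_t N-\partial_{xx}N\leq r_{\max}N-k^-N^2,
\]
using $r\leq r_{\max}$ and the lower bound $K\geq k^-$ (which forces $\iint Kn(y)n(y')\,dy\,dy'\geq k^-N^2$). Since \eqref{initial-data} implies $\sup_xN_0(x)<\infty$, the parabolic comparison principle with the constant Fisher--KPP supersolution gives the uniform bound $N(t,x)\leq M$, where $M$ depends only on $n_0$, $r_{\max}$, $k^-$. Consequently $\int_\R Kn\,dy'\leq k^+M$, so $n$ solves a linear parabolic equation with coefficients bounded in $L^\infty$ by $r_{\max}+k^+M$. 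Applying the parabolic Harnack inequality (in the refined time-shift-free version of Theorem~\ref{th:harnack-mod}) in a fixed ball in the moving frame $(\xi,y)=(x-ct,y)$ and combining the resulting lower bound with the mass estimate (by integrating in $y$), I deduce a uniform pointwise bound $n(t,x,y)\leq C_1$ for $t\geq t_0>0$; the short-time interval is handled by the trivial estimate $n\leq e^{r_{\max}t}\sup n_0$.

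For the tail estimate itself, I work in the moving frame with $\tilde n(t,\xi,y):=n(t,\xi+ct,y)$ and set
\[
\phi(\xi,y):=\begin{cases}\sqrt{1+\xi^2}+\sqrt{1+y^2}&\text{(confined case),}\\ \sqrt{1+(y-B\xi)^2}&\text{(unconfined case),}\end{cases}
\]
smooth surrogates for $|\xi|+|y|$ and $|y-B\xi|$ respectively. I look for a time-independent supersolution $\bar n(\xi,y):=Ae^{-\mu\phi(\xi,y)}$ of the linear moving-frame equation $\partial_t\tilde n-c\partial_\xi\tilde n-\Delta\tilde n=r(\xi,y)\tilde n$, of which $\tilde n$ is a subsolution. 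A direct computation reduces the supersolution condition to
\[
r(\xi,y)\leq c\mu\,\phi_\xi+\mu\Delta\phi-\mu^2|\nabla\phi|^2,
\]
whose right-hand side is uniformly $\geq -c\mu-2\mu^2$ in the confined case (since $\phi_\xi\geq-1$, $\Delta\phi\geq0$, $|\nabla\phi|^2\leq2$) and $\geq -cB\mu-(1+B^2)\mu^2$ in the unconfined case. Choosing $\mu\in(0,\mu_0]$ small enough in terms of $c,B,\delta$, the supersolution inequality therefore holds on $\{\phi\geq R\}$, where $R$ is chosen so that this region is contained in $\{r\leq-\delta\}$ (possible by Assumption~\ref{hyp-confined} or~\ref{hyp-unconfined}). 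Finally I take $A$ large enough that $\bar n\geq n_0$ at $t=0$ (by \eqref{initial-data}, using $\mu\leq\mu_0$) and $\bar n\geq C_1$ on $\{\phi=R\}$ (using the previous step). The parabolic comparison principle on the unbounded region $\{\phi\geq R\}$---applicable since $\tilde n-\bar n$ is a bounded subsolution, via a standard Phragm\'en--Lindel\"of truncation---yields $\tilde n\leq\bar n$ there, and combined with $\tilde n\leq C_1\leq Ae^{\mu R}e^{-\mu\phi}$ on $\{\phi\leq R\}$, this gives \eqref{tails}.

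The main obstacle is the uniform pointwise bound above: combining the \emph{instantaneous} mass estimate $N(t,x)\leq M$ with the classical parabolic Harnack inequality is awkward because the latter involves a time shift between the ball where one bounds $n$ from above and the one where it is controlled from below. The refined Harnack of Theorem~\ref{th:harnack-mod}, which removes this time shift at the price of an arbitrarily small additive error, is precisely what is needed here. A secondary, mostly routine issue is the lack of smoothness of the natural distance functions $|\xi|+|y|$ and $|y-B\xi|$, handled by the $\sqrt{1+\cdot^2}$ smoothing; one must just verify that the pointwise lower bounds on $\phi_\xi,\Delta\phi,|\nabla\phi|^2$ quoted above are preserved under this replacement.
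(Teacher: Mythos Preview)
Your overall strategy---mass bound, Harnack to get a pointwise bound in the favourable region, exponential supersolution in the unfavourable region, comparison---is exactly the paper's approach, and the supersolution part is correctly carried out (the smoothed distance $\phi$ is a harmless cosmetic variant of the paper's non-smooth $|y-B(x-ct)|$ or $|x-ct|+|y|$).

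There is, however, a genuine circularity in your derivation of the uniform pointwise bound $n\le C_1$. Theorem~\ref{th:harnack-mod} takes as \emph{hypothesis} that $\|u\|_{L^\infty}\le U$, and its constant $C$ depends on $U$ (through the choice of $\alpha$ in the proof). You therefore cannot invoke it to establish that very bound. The only a priori global $L^\infty$ control available on $n$ is the crude $n\le e^{r_{\max}t}\|n_0\|_\infty$, which is not uniform in $t$; feeding it into Theorem~\ref{th:harnack-mod} yields a $t$-dependent constant and no uniform bound. (A related imprecision: your claim that ``$n$ solves a linear parabolic equation with coefficients bounded in $L^\infty$'' is only correct on the favourable strip, since $r$ is unbounded below under Assumptions~\ref{hyp-confined}--\ref{hyp-unconfined}; but this is minor, as the Harnack step is local anyway.)

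The fix is precisely the classical time-shifted Harnack inequality you dismissed as ``awkward'', and it is what the paper actually uses. For any $T>0$ and any $(\xi_0,y_0)$ with $\phi(\xi_0,y_0)\le R$, classical Harnack on a ball of radius $R$ inside the slightly larger favourable region gives
\[
n(T,\xi_0+cT,y_0)\le C_H\min_{\text{ball}} n(T+\tau,\cdot,\cdot)\le \frac{C_H}{2R}\int_\R n(T+\tau,\xi_0+cT,y')\,dy'=\frac{C_H}{2R}\,N(T+\tau,\xi_0+cT)\le \frac{C_H}{2R}\,N_\infty,
\]
uniformly in $T$, because the mass bound $N\le N_\infty$ holds at \emph{every} time. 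The time shift is simply irrelevant here; it only becomes an obstacle later in the paper when one needs to compare $n$ with itself at the \emph{same} time, which is why Theorem~\ref{th:harnack-mod} was developed (and why it, in turn, relies on Lemma~\ref{lem:tails} for its hypothesis). Once you make this one-line replacement, the rest of your argument goes through and coincides with the paper's proof.
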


\begin{proof}  Let us first work under Assumption \ref{hyp-unconfined}. If we define the mass $N(t,x):=\int _\R
n(t,x,y)\,dy$, an integration of \eqref{eq_n} along the variable
$y$ provides the inequality
$$\partial_t N-\partial_{xx} N\leq \left(r_{max} - k^- N\right)N.$$
Since $N(0,x)=\int _\R n_0(x,y)\,dy\leq \frac{2C_0}{\mu_0}$, it
then follows from the maximum principle that the mass is uniformly
bounded:
\begin {equation}\label{bornel1}
 N(t,x)\leq N_\infty:=\max\left(\frac{2C_0}{\mu_0},\frac{r_{max}}{k^-}\right).
\end {equation}
Since $N(t,x)=\int _\R n(t,x,y)\,dy$ is bounded in $\{(t,x):\,
t\geq 0, x\in \R\}$, it follows from Assumption
\ref{hyp-unconfined} and \eqref{Assumption_K} that there is $M>0$ such that
$$
\left|r(x-ct,y)-\int_{\R}K(t,x,y,y') n(t,x,y')\,dy'\right|\leq M,
$$
in $\Omega _{R+1}:=\{(t,x,y):\,(x-ct,y)\in
S_{R+1}\}=\{(t,x,y):\,|y-B(x-ct)|<R+1\}$. Hence $n(t,x,y)$ solves
a linear reaction diffusion equation with bounded coefficients on
$\Omega_{R+1}$. As a result, we can apply the parabolic Harnack
inequality (see \cite[page 391]{Eva} for instance). To do so let
us first choose $\ep:=(Bc)^{-1}$ so that, for any $T>0$,
$[T,T+\ep]\times {\{(x,y):\,|y-B(x-cT)|\leq R\}} \subset
\Omega_{R+1}$. Then, by the Harnack inequality, there is $C>0$
such that, for all $T>0$,
$$
\max_{(x,y), |y-B(x-cT)|\leq R}n(T,x,y)\leq C\min_{(x,y),
|y-B(x-cT)|\leq R}n(T+\ep,x,y).
$$
It then follows that
\begin{equation}\label{borneinf}
 \max_{(x,y), |y-B(x-cT)|\leq R}n(T,x,y)\leq \frac C{2R} \int _\R n(T+\ep,x,y)\,dy=\frac C{2R} N(T+\ep,x)\leq \frac C{2R}
 N_\infty.
\end{equation}
Hence, the population $n(t,x,y)$ is bounded by $\frac C{2R}
 N_\infty$ in $\Omega
_R:=\{(t,x,y):\,|y-B(x-ct)|\leq R\}$.

Next, to handle the remaining region ${\Omega
_R}^c=\{(t,x,y):\,|y-B(x-ct)|>R\}$, let us define
$$
\varphi(t,x,y):=\kappa e^{-\mu(|y-B(x-ct)|-R)},
$$
which, in ${\Omega _R}^c$, satisfies
\begin{eqnarray*}
\partial _t \varphi-\partial_{xx} \varphi -\partial_{yy}\varphi
-r(x-ct,y)\varphi&=&(\pm \mu Bc- \mu ^2 B^2 - \mu ^2-r(x-ct,y))
\varphi \\
&\geq& (\pm \mu Bc- \mu ^2 B^2 -\mu ^2+\delta)\varphi
\end{eqnarray*}
by Assumption \ref{hyp-unconfined}. Choosing $\mu >0$ small enough
makes $\varphi$ a super-solution on ${\Omega _R}^c$, whereas $n$
is a sub-solution. If we choose $\kappa=\max(C_0,\frac C{2R}
N_\infty)$ and $\mu\in (0,\mu _0)$, then we enforce $n(t,x,y)\leq
\varphi(t,x,y)$ on  $\{0\}\times\mathbb R^2$ (see
\eqref{initial-data}) and on the parabolic lateral boundary of
${\Omega_R}^c$ (see \eqref{borneinf}). It follows from the
parabolic maximum principle that $n(t,x,y)\leq \varphi(t,x,y)$ in
${\Omega _R}^c$, which concludes the proof of the lemma under
Assumption \ref{hyp-unconfined}.

 When Assumption \ref{hyp-confined}
holds, arguments are very similar. It suffices to select a $\delta
>0$ and an associated $R>0$ such that Assumption \ref{hyp-confined} holds, then to take $\Omega _{R+1}:=\{(t,x,y):\,(x-ct,y)\in
T_{R+1}\}=\{(t,x,y):\,|x-ct|+|y|< R+1\}$ and, finally, to use
$\varphi(t,x,y):=\kappa e^{-\mu(|x-ct|+|y|-2R)}$ on the remaining
region. Details are omitted.
\end{proof}

\subsection{Preliminary Harnack-type estimate}
\label{subsec:harnack}

We present here our refinement of the parabolic Harnack inequality. We already discussed in subsection \ref{ss:estimating} the relevance of Theorem \ref{th:harnack-mod}, which is also of independent interest.

Let $\Omega\subset\mathbb R^N$ an open set with $N\geq 1$. We consider a solution $u(t,x)$, $t>0$, $x\in \Omega$, of a linear parabolic equation, namely
\begin{equation}\label{eq:harnack}
\partial_t u(t,x)-\sum_{i,j=1}^N a_{i,j}(t,x)\partial_{x_i,x_j} u(t,x)-\sum_{i=1}^Nb_i(t,x)\partial_{x_i}u(t,x)= f(t,x)u(t,x),
\end{equation}
where the coefficients are bounded, and $(a_{i,j})_{i,j=1,\dots,N}$ is uniformly elliptic. Let us first recall the parabolic Harnack inequality, as proved by Moser \cite{Moser}.

\begin{theo}[Parabolic Harnack inequality, \cite{Moser}]\label{thm:Harnack}
Let us assume that all the coefficients $(a_{i,j})_{i,j=1,\dots,N}$, $(b_i)_{i=1,\dots,N}$, $f$ belong to  $L^\infty_{loc}\left((0,\infty)\times \Omega\right)$, where $\Omega$ is an open set of $\mathbb R^N$, and that $(a_{i,j})_{i,j=1,\dots,N}$ is uniformly positive definite on $\Omega$. Let $\tau>0$ and $0<R<R'$.

There exists $C_H>0$ such that for any $(\bar t,\bar x)\in (2\tau,\infty)\times \mathbb R^N$ such that $B_{\mathbb R^N}(\bar x, R')\subset\Omega$, and for any nonnegative (weak) solution 
$u\in H^1 \left((0,\infty)\times \Omega\right)$ of \eqref{eq:harnack} on $(\bar t-2\tau,\bar t)\times \Omega$, 
\begin{equation}\label{eq:theoHarnack}
\max_{x\in B(\bar x,R)}u(\bar t-\tau, x)\leq C_H \min_{x\in B(\bar x,R)}u(\bar t, x).
\end{equation}
\end{theo}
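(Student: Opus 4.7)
The plan is to follow Moser's classical two-step strategy \cite{Moser}: first obtain a local $L^\infty$ bound for nonnegative subsolutions of \eqref{eq:harnack} via an $L^p$--$L^\infty$ iteration, then obtain a \emph{weak Harnack inequality} for nonnegative supersolutions via a logarithmic substitution combined with a parabolic John--Nirenberg lemma. A positive solution $u$ of \eqref{eq:harnack} is simultaneously a nonnegative sub- and supersolution (the bounded lower order terms $b_i\partial_{x_i}u+fu$ being absorbed via the substitution $\tilde u=e^{-Mt}u$ with $M=\|f\|_{L^\infty}$, together with Young's inequality on the drift), so applying the two one-sided estimates on appropriately offset past and future parabolic cylinders yields \eqref{eq:theoHarnack}.

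For the $L^\infty$ estimate I would introduce the nested parabolic cylinders $Q(r)=(\bar t-r^2,\bar t)\times B(\bar x,r)$, multiply the equation by $\eta^2 u^{p-1}$ with $\eta$ a smooth space-time cutoff, and integrate by parts. Uniform ellipticity of $(a_{i,j})$ together with Young's inequality then produces a Caccioppoli-type bound controlling $\int_{Q(r')}|\nabla(\eta u^{p/2})|^2$ by $\int_{Q(r)}u^p$. Combining this with the parabolic Sobolev embedding gains a factor $\chi=1+2/N>1$ on the integrability exponent; iterating along a geometric sequence of shrinking radii and summing the resulting estimates yields
\[
\sup_{Q(R)} u \le C\,\|u\|_{L^2(Q(R'))},
\]
applicable to $u$ (as a subsolution, giving an upper bound on a past sub-cylinder) and, via the transformation described below, also to an appropriate power of $u^{-1}$ (giving a lower bound on a future sub-cylinder).

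The remaining and technically hardest step is to link these two $L^2$ norms, i.e.\ to show that $\|u\|_{L^{p_0}(Q_-)}\,\|u^{-1}\|_{L^{p_0}(Q^+)}\le C$ for some $p_0>0$ and suitable past/future cylinders $Q_-, Q^+$. The idea is to consider $v=-\log u$ (first for $u+\epsilon$, then pass to the limit $\epsilon\to 0$), which satisfies a parabolic inequality whose nonlinear structure produces a coercive quadratic gradient term $a_{i,j}\partial_i v\,\partial_j v \ge \lambda|\nabla v|^2$. Testing against a cutoff squared and integrating yields a uniform $L^2$ bound on $\nabla v$ and, via a spatial Poincar\'e inequality, controls the parabolic BMO seminorm of $v$ on appropriate cylinders. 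A parabolic John--Nirenberg lemma then produces $p_0>0$ and $C$ such that $\int_{Q_-} e^{p_0 v}\cdot\int_{Q^+} e^{-p_0 v}\le C$, which closes the argument. The main obstacle is precisely this BMO / John--Nirenberg step, and it is also the origin of the time shift $\tau$ in \eqref{eq:theoHarnack}: the argument fundamentally requires that the supremum cylinder lie strictly in the past of the infimum cylinder, a structural asymmetry of the heat operator absent in the elliptic setting, and exactly the limitation that motivates the refinement in Theorem~\ref{th:harnack-mod}.
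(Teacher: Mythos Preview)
Your sketch follows Moser's classical iteration scheme and is a reasonable outline of how \eqref{eq:theoHarnack} is established. Note, however, that the paper does not give its own proof of Theorem~\ref{thm:Harnack}: it is stated as a quotation of Moser's result \cite{Moser} and used as a black box, so there is no proof in the paper to compare your proposal against. Your outline is consistent with the cited source.
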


\begin{rem}\label{Rk:Harnack} Let us emphasize that the time shift $\tau >0$ is necessary for \eqref{eq:theoHarnack} to hold. To see this, consider for $N=1$ the family $u(t,x)=\frac 1{\sqrt{4\pi t}}e^{-\frac{(x+ x_0)^2}{4t}}$ ($x_0\in \R$)  of solutions to the Heat equation. Then, for $\bar t=1$ and $\bar x=0$, we have 
\[\frac{\max_{x\in B(0,R)}u(1, x)}{\min_{x\in B(0,R)}u(1, x)}\geq \frac{u(1,0)}{u(1,R)}=e^{\frac{(R+ x_0)^2-{ x_0}^2}{4}}=e^{\frac{2R x_0+R^{2}}{4}},\]
which is not bounded from above as $x_0\to \infty$. Hence, estimate \eqref{eq:theoHarnack} cannot hold with $\tau=0$. 
\end{rem}

Nevertheless, provided the solution $u$ is uniformly bounded, we can derive from Theorem~\ref{thm:Harnack} the following refinement, where no time shift is required.

\begin{theo}[A refinement of Harnack inequality]\label{th:harnack-mod}
Let us assume that all the coefficients $(a_{i,j})_{i,j=1,\dots,N}$, $(b_i)_{i=1,\dots,N}$, $f$ belong to  $L^\infty_{loc}\left((0,\infty)\times \mathbb R^N\right)$, and that $(a_{i,j})_{i,j=1,\dots,N}$ is uniformly positive definite on $\mathbb R^N$. Let also $\omega\subset \mathbb R^N$, and assume that for any $R'>0$, there exists  $K>0$ such that, for all $1\leq i,j\leq N$,
\begin{equation}
\label{def-K}
a_{i,j}(t,x)\leq K, \; b_i(t,x)\leq K, \; f(t,x)\leq K\; \text{ a.e. on } (0,\infty)\times \omega+B_{\mathbb R^N}(0,R').
\end{equation}
 Let  $R>0$, $\delta >0$, $U>0$,   $\bar t>0$ and $\rho>0$.

There exists $C>0$ such that for  any $\bar x\in\mathbb R^N$ satisfying $\textrm{d}_{\mathbb R^N}(\bar x,\omega)\leq\rho$, and any nonnegative (weak) solution 
$u\in H^1 \left((0,\infty)\times \mathbb R^N\right)$ of \eqref{eq:harnack} on $(0,\infty)\times \mathbb R^N$, and such that $\|u\|_{L^\infty(\mathbb R^N)}\leq U$, we have
\begin{equation}\label{eq:theoHarnack2}
\max_{x\in B(\bar x,R)}u(\bar t, x)\leq C\min_{x\in B(\bar x,R)}u(\bar t, x)+\delta.
\end{equation}
\end{theo}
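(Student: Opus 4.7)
My plan is to bridge the unavoidable time shift in Theorem~\ref{thm:Harnack} by combining it with interior parabolic H\"older regularity, which becomes quantitative thanks to the uniform bound $\|u\|_\infty\leq U$. The key observation is the following: Theorem~\ref{thm:Harnack} already yields $\max_{B(\bar x,R)}u(\bar t-\tau,\cdot)\leq C_H \min_{B(\bar x,R)}u(\bar t,\cdot)$ for some shift $\tau>0$, so it suffices to guarantee that $u$ varies by at most $\delta$ on $B(\bar x,R)$ between the times $\bar t-\tau$ and $\bar t$; the left-hand side of the Harnack estimate can then be replaced by $\max_{B(\bar x,R)}u(\bar t,\cdot)$, up to the additive $\delta$ appearing in \eqref{eq:theoHarnack2}.

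To implement this, I would first fix $R':=R+\rho+1$ and invoke \eqref{def-K} to secure a uniform bound $K$ on the coefficients over $(0,\infty)\times(\omega+B_{\mathbb R^N}(0,R'))$. Rewriting \eqref{eq:harnack} as $\partial_t u-\sum_{i,j} a_{i,j}\partial_{x_i,x_j}u-\sum_i b_i\partial_{x_i} u=g$ with $g:=fu$ satisfying $\|g\|_\infty\leq KU$, classical interior parabolic regularity (of Krylov--Safonov or Nash--Moser type) produces an exponent $\alpha\in(0,1)$ and a constant $C_\alpha>0$, depending only on $K$, the ellipticity, $U$, $\bar t$, $R$ and $\rho$, such that
\[
|u(t_1,x)-u(t_2,x)|\leq C_\alpha\,|t_1-t_2|^{\alpha/2}\qquad\text{for } t_1,t_2\in[\bar t/2,3\bar t/2],\; x\in\omega+B_{\mathbb R^N}(0,\rho+R).
\]
I then choose $\tau\in(0,\bar t/3)$ small enough that $C_\alpha\,\tau^{\alpha/2}\leq\delta$, apply Theorem~\ref{thm:Harnack} on $(\bar t-2\tau,\bar t)\times\mathbb R^N$ with this $\tau$ to produce the Harnack constant $C_H$, and combine the two estimates:
\[
\max_{B(\bar x,R)}u(\bar t,\cdot)\leq \max_{B(\bar x,R)}u(\bar t-\tau,\cdot)+\delta \leq C_H \min_{B(\bar x,R)}u(\bar t,\cdot)+\delta,
\]
which yields \eqref{eq:theoHarnack2} with $C:=C_H$.

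The main obstacle is to make sure that both $C_\alpha$ and $C_H$ are genuinely uniform in $\bar x$, rather than merely finite at each $\bar x$ with an unbounded dependence on it. This uniformity rests on hypothesis \eqref{def-K}: since every admissible $\bar x$ lies within $\rho$ of $\omega$, each parabolic cylinder $[\bar t/2,3\bar t/2]\times B(\bar x,2R)$ needed for the interior H\"older and Harnack estimates sits inside the fixed slab $(0,\infty)\times(\omega+B_{\mathbb R^N}(0,R'))$, on which the coefficients are controlled by $K$. Consequently the Moser-type constants depend only on $K$, the ellipticity, $\bar t$, $R$, $\rho$, $U$ and $\delta$, and not on the particular location $\bar x$. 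Consistently with Remark~\ref{Rk:Harnack}, the additive $\delta$ cannot be dispensed with: it is precisely the price paid to absorb the time oscillations of $u$, whose amplitude is controlled by $U$ through $C_\alpha$.
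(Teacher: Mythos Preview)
Your argument is correct and furnishes a genuine alternative to the paper's proof. Both proofs share the same skeleton: combine the time-shifted Harnack inequality \eqref{eq:theoHarnack} with a control of the variation of $u$ between $\bar t-\tau$ and $\bar t$, the latter being quantitative precisely because $\|u\|_\infty\le U$. The difference lies in how that time variation is controlled. You invoke Krylov--Safonov interior H\"older regularity to obtain $|u(\bar t,x)-u(\bar t-\tau,x)|\le C_\alpha\tau^{\alpha/2}$ and then shrink $\tau$ so that $C_\alpha\tau^{\alpha/2}\le\delta$; the Harnack constant $C_H$ is then fixed by this small $\tau$. The paper instead keeps $\tau=1$ fixed and builds, via the maximum principle, an explicit polynomial-in-$x$, linear-in-$t$ supersolution $\phi$ on $(1,2)\times B(\bar x,\alpha R)$; comparison yields $\max_{B(\bar x,R)}u(2,\cdot)\le e^{K}\max_{B(\bar x,\alpha R)}u(1,\cdot)+\delta$ once $\alpha$ is taken large, and the standard Harnack inequality on $B(\bar x,\alpha R)$ closes the loop. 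In short, you absorb the error by sending $\tau\to0$, while the paper absorbs it by enlarging the spatial scale $\alpha R\to\infty$. The paper's route is more elementary---it uses only the comparison principle and avoids any appeal to regularity theory---whereas yours is conceptually transparent and would generalise more readily to settings where such regularity is already on the shelf. A minor bookkeeping remark: your choice $R'=R+\rho+1$ should really be $R'=\rho+cR$ with $c>1$ large enough to accommodate the enlarged cylinders needed by both the H\"older and Harnack estimates, but this is cosmetic and does not affect the validity of your argument.
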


\begin{proof} Let us assume without loss of generality that $\bar t=2$ and $\bar x=0$. We introduce
\begin{equation*}%\label{def:phi}
\phi(t,x):=e^{K (t-1)}\left[\max_{|x|\leq \alpha
R}u\left(1,x\right)+\frac{2\|u\|_\infty}{(\alpha
R)^2}
%\left(\sum_{i=1}^Na_{i,i}+\alpha R\sum_{i=1}^Nb_i|\right)
NK(1+\alpha R)
(t-1)+\frac{\|u\|_\infty}{(\alpha
R)^2}|x|^2\right],
\end{equation*}
%where
%$$
%\delta:=-\frac{4\|u\|_\infty}{(\alpha R)^2}+\max_{|x|\leq \alpha
%R}u\left(1,x\right),
%$$
where $\alpha>1$ is to be determined later. We aim at applying the parabolic comparison principle on the domain $(1,2)\times B(0,\alpha R)$. We have
$\phi\left(1,x\right)\geq
u\left(1,x\right)$ for $|x|\leq \alpha R$, and $\phi(t,x)\geq \Vert u \Vert _{L^\infty(\Omega)}\geq u(t,x)$ for $1\leq t \leq 2$, $|x|=\alpha R$. A simple computation shows
$$
\partial_t\phi -\sum_{i,j=1}^Na_{i,j}\partial_{x_i,x_j}\phi-\sum_{i=1}^Nb_i\partial_{x_i} \phi-K\phi=\frac{2\Vert u \Vert _\infty}{(\alpha R)^{2}}e^{K(t-1)}\left[NK(1+\alpha R)-\sum _{i=1}^N a_{i,i}(t,x)-\sum _{i=1}^N b_i (t,x)x_i\right]
$$
which, in view of \eqref{def-K}, is nonnegative
on $(1,2)\times B(0,\alpha R)$. Since $\partial _t u -\sum_{i,j=1}^Na_{i,j}\partial_{x_i,x_j} u
-\sum_{i=1}^Nb_i\partial_{x_i}u-K u\leq 0$, the comparison principle yields
$u(2,\cdot)\leq \phi(2,\cdot)$ on $B(0,\alpha R)$. In particular we
have
\begin{eqnarray}
\max_{|x|\leq R}u(2,x)&\leq &\max_{|x|\leq R}\phi(2,x)\nonumber\\
&\leq& e^{K }\left(\max_{|x|\leq \alpha
R}u\left(1,x\right)+\frac{2\|u\|_\infty}{(\alpha R)^2}NK(1+\alpha R)
%\left(\sum_{i=1}^Na_{ii}+\alpha R\sum_{i=1}^N|b_i|\right)
+\frac{\|u\|_\infty}{\alpha^2}\right)\nonumber\\
&\leq&e^{K}\max_{|x|\leq \alpha R}u\left(1,x\right)+\frac{e^K U}{\alpha ^2}\left(\frac{2NK(1+\alpha R)}{R^2}+1\right)\nonumber\\
&\leq&e^{K}\max_{|x|\leq \alpha R}u\left(1,x\right)+\delta,
%\frac{1}{\alpha^2}\left(1+\frac 2{R^2}\sum_{i=1}^Na_{ii}+\alpha R\sum_{i=1}^N|b_i|\right)e^{\|f\|_\infty}\|u\|_\infty.
\label{bidule1harnack}
\end{eqnarray}
provided we select $\alpha>1$ large enough. Thanks to \eqref{def-K}, we can then apply Theorem~\ref{thm:Harnack} with $\tau:=1$ and $\Omega:=\omega+B_{\mathbb R^N}(0,2\alpha R)$, to get that there exists a constant $C_H=C_H(R,\alpha)>0$ such that
\begin{equation}
\max_{|x|\leq \alpha R}u\left(1,x\right)\leq C_H\min_{|x|\leq \alpha R}u\left(2,x\right)\leq C_H\min_{|x|\leq  R}u\left(2,x\right).\label{bidule2harnack}
\end{equation}
Theorem \ref{th:harnack-mod} follows from \eqref{bidule1harnack} and \eqref{bidule2harnack}.
\end{proof}

\section{The confined case}\label{s:survival}

In this section, we consider the confined case, namely Assumption
\ref{hyp-confined}, for which the growth rate $r(x,y)$ is positive for a bounded set of points $(x,y)$ only. We discuss the extinction or the survival of the
population, defining a critical speed $c^*$ for the climate shift
by
\begin{equation}\label{critical-speed}
c^*:=\begin{cases}2\sqrt{-\lambda_\infty} & \text{ if }
\lambda_\infty< 0\vspace{3pt}\\
-\infty & \text{ if } \lambda_\infty \geq0,
\end{cases}
\qquad \text{the critical speed in the confined case,}
\end{equation}
where $\lambda _\infty$ is the principal eigenvalue defined by
\eqref{vp-pb}. In the whole section, we are then equipped with
$\lambda _\infty$ and $\Gamma _\infty(x,y)$ satisfying
\eqref{vp-pb}.

We introduce  two changes of variable that will be very convenient
in the sequel. Precisely, we define
\begin{equation}\label{change-var}
\tilde n(t,x,y):=n(t,x+ct,y), \; u(t,x,y):=e^{\frac {cx}2}\tilde
n(t,x,y)
\end{equation}
so that the equation \eqref{eq_n} is recast as
\begin{align}
& \partial_t \tilde n(t,x,y)-c\partial_x \tilde n(t,x,y)-\partial_{xx}\tilde n(t,x,y)-\partial_{yy}\tilde n(t,x,y)\nonumber\\
&\quad=\left(r(x,y)-\int_{\mathbb R}K(t,x+ct,y,y')\tilde
n(t,x,y')\,dy'\right)\tilde n(t,x,y),\label{eq-tilde-n}
\end{align}
or as
\begin{align}
& \partial_t u(t,x,y)-\partial_{xx} u(t,x,y)-\partial_{yy} u(t,x,y)\nonumber\\
&\quad = \left(r(x,y)-\frac{c^2}4-\int_{\mathbb
R}K(t,x+ct,y,y')e^{-\frac{cx}2}u(t,x,y')\,dy'\right)u(t,x,y).\label{eq_u}
\end{align}

\subsection{Extinction}

In this subsection, we show extinction of the population for rapid
climate shifts $c>c^*$. Since extinction comes from the linear
part of the equation \eqref{eq_n}, the nonlocal term will not be a
problem here. If follows that if $c>c^*$, the extinction for any
reasonable initial population (see Proposition
\ref{prop:extinction2}) can be proven thanks to an argument
similar to the one in \cite[Proposition 1.4]{Ber-Ros2}.
Nevertheless, this argument does not provide any information on
the speed of extinction. If  we further assume sufficient decay of
the tails of the initial data (see Proposition
\ref{prop:extinction}), we can show that the extinction is
exponentially fast. We start with this last situation.

\begin{prop}[Extinction with initial control of the tails]\label{prop:extinction}
Assume that $r\in L^\infty_{loc}(\R^2)$ and $K\in
L^\infty((0,\infty)\times\R^3)$ satisfy \eqref{Assumption_r} and
\eqref{Assumption_K} respectively. Let Assumption
\ref{hyp-confined} hold. Assume that $n_0$ satisfies
\eqref{initial-data}. If $c> c^*$ and the initial population
satisfies
\begin{equation}\label{queues}
M:=\sup _{(x,y)\in \R^2}
\frac{e^{\frac{cx}2}n_0(x,y)}{\Gamma_\infty(x,y)} <\infty,
\end{equation}
then any global nonnegative solution $n(t,x,y)$ of \eqref{eq_n}
satisfies
\begin{equation}\label{controle1}\sup _{(x,y)\in \R^2}
\frac{e^{\frac{cx}2}n(t,x+ct,y)}{\Gamma_\infty(x,y)}= \mathcal
O\left(e^{\left(-\lambda_\infty-\frac {c^2}4\right)t}\right)\to
0,\, \text{ as } t\to \infty, \end{equation} and, for some $\gamma
_0
>0$,
\begin{equation}\label{controle2}
\sup_{x\in\mathbb R} \,\int _\R n(t,x,y)\,dy= \mathcal
O(e^{-\gamma _0 t}) \to 0,\, \text{ as } t\to\infty.
\end{equation}

\end{prop}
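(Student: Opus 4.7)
The plan is to linearize and compare with the principal eigenfunction $\Gamma_\infty$ in the moving frame. I first pass to \eqref{eq_u} via the change of variables \eqref{change-var}: since $u\geq 0$ and $K\geq 0$, the nonlocal term acts as a nonpositive source, and therefore
\[\partial_t u - \partial_{xx}u - \partial_{yy}u \leq \bigl(r(x,y) - \tfrac{c^2}{4}\bigr) u.\]

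I then take as a candidate supersolution
\[v(t,x,y):= M\, e^{-(\lambda_\infty + c^2/4)t}\,\Gamma_\infty(x,y),\]
with $M$ as in \eqref{queues}. A direct computation using the eigenvalue identity \eqref{vp-pb} gives $\partial_t v - \partial_{xx}v - \partial_{yy}v = (r - c^2/4)v$, and the hypothesis $c>c^*=2\sqrt{-\lambda_\infty}$ makes the rate $\lambda_\infty + c^2/4$ strictly positive (when $\lambda_\infty\geq 0$, the condition $c>c^*=-\infty$ is automatic and the argument is easier). By the very definition of $M$, $v(0,\cdot)\geq u(0,\cdot)$. The parabolic comparison principle on $\R^2$ then gives $u\leq v$ globally, which is exactly \eqref{controle1} after reverting the change of variables.

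To deduce \eqref{controle2}, I write $\xi=x-ct$ and use \eqref{controle1} to bound
\[\int_{\R} n(t,x,y)\,dy \leq M\,e^{-(\lambda_\infty + c^2/4)t}\,e^{-c\xi/2}\int_{\R}\Gamma_\infty(\xi,y)\,dy.\]
The function $G(\xi):=\int_{\R}\Gamma_\infty(\xi,y)\,dy$ is uniformly bounded in $\xi$, thanks to exponential decay of $\Gamma_\infty$ in $y$ (uniformly in $x$), which follows from Assumption~\ref{hyp-confined} and a standard Agmon-type comparison applied to the eigenvalue equation; however $e^{-c\xi/2}$ blows up as $\xi\to -\infty$. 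I handle this by splitting $\R = \{|\xi|\leq A\}\cup\{|\xi|>A\}$: on the first region the right-hand side above is $\leq M\|G\|_\infty e^{cA/2}e^{-(\lambda_\infty + c^2/4)t}$, while on the second, the unweighted spatial tail estimate of Lemma~\ref{lem:tails} yields $\int_{\R} n(t,x,y)\,dy\leq (2C/\mu)e^{-\mu A}$. Taking $A=\eta t$ with $\eta>0$ small enough that $\lambda_\infty + c^2/4 - c\eta/2>0$ balances both bounds and gives \eqref{controle2} with $\gamma_0:=\min\{\mu\eta,\,\lambda_\infty + c^2/4 - c\eta/2\}>0$.

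The main technical obstacle lies in the first step: $u$ may fail to be globally bounded, since the weight $e^{cx/2}$ can amplify the tails of $n$ as $x\to+\infty$, so the parabolic comparison principle on $\R^2$ is not immediate. This is handled by a Phragm\'en--Lindel\"of argument, using that Lemma~\ref{lem:tails} controls $n$ by $Ce^{-\mu(|x-ct|+|y|)}$ (hence $u$ has at most exponential growth in $x$) and that the zeroth-order coefficient $r-c^2/4$ is bounded above. The limited regularity of $\Gamma_\infty$ (only $W^{2,2}_{loc}$) forces one to work with weak solutions, which is routine.
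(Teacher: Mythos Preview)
Your proof is correct and follows essentially the same route as the paper: the same supersolution $M e^{-(\lambda_\infty+c^2/4)t}\Gamma_\infty$ for $u$, the same appeal to exponential decay of $\Gamma_\infty$ to bound $\int_\R\Gamma_\infty(\xi,y)\,dy$ uniformly (the paper proves this explicitly via elliptic comparison, yielding $\Gamma_\infty(x,y)\leq Ce^{-\sqrt{\delta/2}(|x|+|y|)}$), and the same splitting-in-$\xi$ argument with a time-dependent threshold to combine \eqref{controle1} with the tail estimate of Lemma~\ref{lem:tails}. Your remark on the Phragm\'en--Lindel\"of issue for the comparison step is a point the paper passes over silently; it is indeed needed, and your justification via the exponential growth bound on $u$ coming from Lemma~\ref{lem:tails} is the right one.
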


\begin{proof}
We consider
$$\phi(t,x,y):=M e^{\left(-\lambda_\infty-\frac {c^2}4\right)t}\Gamma_\infty(x,y),$$
which satisfies
$$
\partial_t \phi(t,x,y)-\partial_{xx}\phi(t,x,y)-\partial_{yy} \phi(t,x,y)
=\left(r(x,y)-\frac{c^2}4\right)\phi(t,x,y).
$$
In view of \eqref{eq_u}, $u(t,x,y)=e^{\frac{cx}2}n(t,x+ct,y)$ is a
sub-solution of the above equation. Since the definition of $M$
implies $u(0,x,y)\leq \phi(0,x,y)$, it  follows from the parabolic
maximum principle that $u(t,x,y)\leq \phi(t,x,y)$ for any time
$t\geq0$, which proves \eqref{controle1}.

To prove \eqref{controle2}, we will need
\begin{equation}\label{postponed}
\rho:=\sup _{x\in \R} \, \int _{\R} \Gamma _\infty(x,y)\,dy
<\infty,
\end{equation}
whose proof is postponed. If $c=0$, \eqref{controle2} follows from
\eqref{controle1} and \eqref{postponed}. If $c>0$, combining
\eqref{controle1} and the control of the tails \eqref{tails} we
obtain, for $\alpha
>0$ to be selected,
\begin{eqnarray*}
\sup_{x\in\mathbb R}\,\int_\R n(t,x,y)\,dy&=&
 \sup_{x\in\mathbb R}\,\int_\R n(t,x+ct,y)\,dy\\
 &\leq& \sup_{x\leq -\alpha t}\,\int_\R n(t,x+ct,y)\,dy+\sup_{x\geq -\alpha t}\int_\R n(t,x+ct,y)\,dy\\
&\leq& \sup_{x\leq -\alpha t}\,\int_\R C
e^{-\mu(|x|+|y|)}\,dy+\sup_{x\geq -\alpha t}\int_\R
e^{-\frac{cx}2}Me^{\left(-\lambda_\infty-\frac {c^2}4\right)t}
\Gamma _\infty (x,y)\,dy\\
&\leq&\frac{2C}\mu e^{-\mu\alpha t}+M\rho
e^{\left(-\lambda_\infty-\frac {c^2}4+\frac{c\alpha}2\right)t},
\end{eqnarray*}
which proves \eqref{controle2} by selecting $\alpha >0$ small
enough so that $-\lambda_\infty-\frac {c^2}4+\frac{c\alpha}2<0$.

To conclude, let us now prove \eqref{postponed}. Select $\delta
>0$ such that $\delta \geq 2 \lambda _\infty$. For this $\delta
>0$, select $R>0$ as in Assumption \ref{hyp-confined} so that --- in
view of equation \eqref{vp-pb}--- the principal eigenfunction
satisfies $-\partial _{xx}\Gamma _\infty-
\partial _{yy} \Gamma _\infty \leq -\frac \delta 2 \Gamma
_\infty$ in $\{(x,y):\,|x|\geq R,\,|y|\geq R \}$ and $\Vert \Gamma
_\infty \Vert _\infty \leq 1$. It therefore follows from the
elliptic comparison principle that
\begin{equation}\label{etoile}
 \Gamma _\infty (x,y)\leq e^{-\sqrt{\frac \delta
2}\left(|y|-R\right)}e^{-\sqrt{\frac \delta
2}\left(|x|-R\right)}\; \text{ in } \{(x,y):\,|x|\geq R\,|y|\geq R
\}.
\end{equation}
Therefore we have, for all $(x,y)\in \R ^2$,
\begin{equation}\label{bornegammainf}
\Gamma _\infty (x,y) \leq C e^{-\sqrt{\frac \delta 2}(|x|+|y|)},
\end{equation}
which implies \eqref{postponed}.\end{proof}

\begin{rem} Observe that, in the unconfined case, the estimate \eqref{controle1} remains valid
but the control of the tails \eqref{tails} under Assumption
\ref{hyp-unconfined} does not imply \eqref{controle2}. Roughly
speaking, in the unconfined case, even if $c>c^*$ there is a
possibility that the population survives, but migrates towards
large $x$ at a speed $\omega\in (0,c)$ different from the climate
change speed $c$. Therefore we need a different criterion that
will be discussed in Section \ref{s:propagation}.
\end{rem}

\begin{prop}[Extinction for general initial data]\label{prop:extinction2}
Assume that $r\in L^\infty_{loc}(\R^2)$ and $K\in
L^\infty((0,\infty)\times\R^3)$ satisfy \eqref{Assumption_r} and
\eqref{Assumption_K} respectively. Let Assumption
\ref{hyp-confined} hold. Assume that $n_0\in L^\infty(\R ^2)$ and
that $\sup _{x\in \R}\int _\R n_0(x,y)\,dy<\infty$. If $c> c^*$
then any global nonnegative solution $n(t,x,y)$ of \eqref{eq_n}
satisfies
\begin{equation}\label{extinction-henri}
\lim _{t\to \infty} n(t,x,y)=0,
\end{equation}
uniformly with respect to $(x,y)\in \R^2$.
\end{prop}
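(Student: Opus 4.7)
The plan is to exploit that $n$ is a subsolution of the linear equation $\partial_t v-\Delta v\leq r(x-ct,y)v$ (the competition being nonnegative), working in the moving frame $\tilde n(t,x,y):=n(t,x+ct,y)$, which satisfies $\partial_t\tilde n - c\partial_x\tilde n - \Delta\tilde n \leq r(x,y)\tilde n$.

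First I would establish a uniform $L^\infty$-bound. Integrating \eqref{eq_n} in $y$ and comparing with a logistic ODE as in the proof of Lemma \ref{lem:tails}, the mass $N(t,x):=\int_\R n(t,x,y)\,dy$ stays uniformly bounded by $N_\infty:=\max(\sup_x N(0,x),r_{max}/k^-)$. Combining this $L^1_y$-bound with the short-time $L^\infty$-bound $\|n(t,\cdot)\|_\infty\leq e^{r_{max}}\|n_0\|_\infty$ on $t\in[0,1]$ and iteratively applying the refined Harnack inequality (Theorem \ref{th:harnack-mod}) yields $\tilde n(t,x,y)\leq U$ for $t\geq 1$ and some uniform $U$. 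Second, fixing $\delta>0$ and the associated $R>0$ in Assumption \ref{hyp-confined}, I would adapt the super-solution argument of the proof of Lemma \ref{lem:tails} to start at $t=1$: the function $\varphi(t,x,y):=Ue^{-\delta(t-1)/2}+Ue^{-\mu(|x|+|y|-R)}$ (with $\mu>0$ small) is a super-solution of the linearized problem for $\tilde n$ on $\{|x|+|y|>R\}$ that dominates $\tilde n$ on the parabolic boundary; parabolic comparison then gives $\tilde n\leq\varphi$ there. Consequently, for any $\eta>0$, there exist $R''=R''(\eta)$ and $T=T(\eta)$ such that $\tilde n(t,\cdot)\leq\eta$ on $\{|x|+|y|\geq R''\}$ for all $t\geq T$.

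Third, I would handle the interior via an adjoint eigenfunction argument. By Proposition \ref{prop:lambda}, $\lambda_{R'}\searrow\lambda_\infty$ as $R'\to\infty$, where $\lambda_{R'}$ denotes the Dirichlet principal eigenvalue of $-\Delta-r$ on $B_{R'}:=\{|x|+|y|<R'\}$, with positive eigenfunction $\Psi_{R'}$. Since $c>c^*$, I can choose $R'>R$ large enough that $\mu_{R'}:=c^2/4+\lambda_{R'}>0$. A direct computation shows that $\Phi_{R'}^*(x,y):=e^{cx/2}\Psi_{R'}(x,y)$ is the Dirichlet principal eigenfunction of the adjoint operator $c\partial_x-\Delta-r$ on $B_{R'}$, with the same eigenvalue $\mu_{R'}$. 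Multiplying the subsolution inequality by $\Phi_{R'}^*\geq 0$ and integrating on $B_{R'}$ (where the Dirichlet condition $\Phi_{R'}^*|_{\partial B_{R'}}=0$ cancels both the drift and the $\nabla\tilde n$-boundary contributions), I arrive at
$$\frac{d}{dt}K^*(t)+\mu_{R'}K^*(t)\leq C_{R'}\,\epsilon_t,\qquad K^*(t):=\int_{B_{R'}}\tilde n(t,\cdot)\,\Phi_{R'}^*,$$
with $\epsilon_t:=\sup_{|x|+|y|=R'}\tilde n(t,\cdot)$. Gronwall's lemma then gives $\limsup_{t\to\infty}K^*(t)\leq C_{R'}\mu_{R'}^{-1}\limsup_t\epsilon_t$, and a further application of the refined Harnack inequality (Theorem \ref{th:harnack-mod}) upgrades the resulting $L^1$-bound on $\tilde n$ to the $L^\infty$-bound $\limsup_{t\to\infty}\sup_{B_{R'-2}}\tilde n(t,\cdot)\leq C' U e^{-\mu(R'-R)}$, which tends to $0$ as $R'\to\infty$. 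Combining with the exterior bound of step two, for any $\eta>0$ I get $\limsup_{t\to\infty}\sup_{(x,y)}\tilde n(t,x,y)\leq\eta$, so $n(t,x,y)\to 0$ uniformly.

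The hard part is the absence of exponential decay on $n_0$, which prevents the direct application of Proposition \ref{prop:extinction}. The refined Harnack inequality is the key new tool, both for the a priori uniform $L^\infty$-bound and for upgrading interior $L^1$-decay to $L^\infty$-decay; the adjoint eigenfunction test in the moving frame translates $c>c^*$ into the Gronwall contraction rate $\mu_{R'}>0$.
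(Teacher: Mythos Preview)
Your approach differs substantially from the paper's. The paper does not use the adjoint eigenfunction/Gronwall machinery at all: instead it truncates $r$ to a bounded function $r_{cut}$, chooses the initial data $w_0=M+Ae^{-cx/2}\Gamma_{cut}$ to be a stationary \emph{supersolution} of the linear comparison problem $\partial_t w-c\partial_x w-\Delta w=r_{cut}w$, so that $t\mapsto w(t,\cdot)$ is pointwise decreasing; the limit $\bar w$ solves the steady equation and is shown to be $\equiv 0$ by pairing with $\Gamma_{cut}$ via Green's identity on balls and letting the radius go to infinity. Uniform convergence is then obtained by a compactness/contradiction argument with spatial shifts. The paper's route buys clean constants (everything lives on equations with bounded coefficients once $r$ is truncated), while yours is more dynamical and closer in spirit to the nonlinear arguments elsewhere in the paper.

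That said, your proposal has a genuine gap in the last step. You assert that the interior $L^\infty$ bound $\limsup_{t\to\infty}\sup_{B_{R'-2}}\tilde n\leq C'Ue^{-\mu(R'-R)}$ tends to $0$ as $R'\to\infty$, but the constant $C'$ depends on $R'$ through two uncontrolled channels: (a) the Harnack constant needed to pass from the weighted integral $K^*$ to $\sup_{B_{R'-2}}\tilde n$ depends on $\|r\|_{L^\infty(B_{R'-1})}$, which blows up (recall $r\to-\infty$ at infinity in the confined case); and (b) the boundary constant $C_{R'}=\int_{\partial B_{R'}}|\partial_\nu\Phi^*_{R'}|$ involves the factor $e^{cx/2}$ on $\partial B_{R'}$, of size $e^{cR'/2}$. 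Neither is tracked, so the conclusion $C'e^{-\mu(R'-R)}\to 0$ is unjustified as written. A repair is possible: keep the Harnack upgrade on a \emph{fixed} ball $B_{R_0}$ (so that the Harnack constant and $\min_{B_{R_0}}\Psi_{R'}\to\min_{B_{R_0}}\Gamma_\infty>0$ stabilize), and control $C_{R'}$ by exploiting that $\Gamma_\infty$ decays faster than any exponential (cf.\ the proof of \eqref{postponed}); alternatively, replace $r$ by $r_{cut}$ as the paper does before running your Gronwall argument, which makes all constants uniform. Also, a minor point: in Step~1 you cannot invoke Theorem~\ref{th:harnack-mod} to obtain the a priori bound $U$, since that theorem already \emph{assumes} $\|u\|_\infty\leq U$; the standard Harnack inequality (Theorem~\ref{thm:Harnack}) combined with the mass bound, exactly as in the proof of Lemma~\ref{lem:tails}, is what gives the bound on the compact set.
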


\begin{proof} It is equivalent and more convenient to prove \eqref{extinction-henri} for
$\tilde n(t,x,y)=n(t,x+ct,y)$ which solves \eqref{eq-tilde-n}.
Since $r(x,y)\to -\infty$ as $|x|+|y|\to \infty$, we first define, for some $R>0$, the cut-off function
\begin{equation*}\label{cut-off} r_{cut}(x,y):= \begin{cases}
r(x,y) & \text{ if } (x,y)\in B_{R}\\
\sup _{(x,y)\notin B_R} r(x,y) & \text{ if } (x,y) \notin B_{R},
\end{cases}
\end{equation*}
which is larger than $r(x,y)$ and has the advantage of being bounded. Let
$\left(\lambda_{cut},\Gamma_{cut}\right) \in \R \times
C^\infty(\R)$ solve the
generalized principal eigenvalue problem
\begin{equation}\label{vp-pb-cut}
\begin{cases}
-\partial_{xx}
\Gamma_{cut}(x,y)-\partial_{yy}\Gamma_{cut}(x,y)-r_{cut}(x,y)\Gamma_{cut}(x,y)=
\lambda_{cut}\Gamma_{cut}(x,y)  \quad\text{ for all  } (x,y)\in \R^2\\
\Gamma_{cut}(x,y) >0\quad \text{ for all } (x,y)\in \R^2,\quad
\Vert \Gamma _{cut} \Vert _\infty=1.
 \end{cases}
 \end{equation}
We claim that
\begin{equation}\label{claim-ne}
\lambda _{cut} \nearrow \lambda
_\infty%>-\frac{c^2}4
, \text{ as } R\to \infty.
\end{equation}
Since arguments are rather  classical (see \cite[Proposition
4.2]{ber-ham-ros} for instance), we only sketch the proof. Since
$r_{cut} \geq r$ we have $\lambda _{cut}\leq \lambda _{\infty}$.
Also, $\lambda _{cut}$ is increasing with respect to $R$. Hence
$\lambda  _{cut} \nearrow \tilde \lambda\leq \lambda_{\infty}$, as
$R\to \infty$. Assume, by way of contradiction, that $\tilde
\lambda <\lambda _{\infty}$. Since $r_{cut}\to r$ locally
uniformly, it follows from the Harnack inequality, elliptic
interior estimates and a diagonal extraction that we can construct
a function $\gamma >0$ such that $-\partial _{xx} \gamma -\partial
_{yy} \gamma =(r+\tilde \lambda)\gamma $, and $\gamma$ is then a
subsolution of the equation satisfied by $\Gamma _{\infty}$ (see
the first line of \eqref{vp-pb}). Outside of a large ball, the
zero order term of the equation, namely  $r+\lambda _\infty$, is
negative so the elliptic comparison principle applies outside of a
large ball. This enables us to define
$$
\ep_0:=\sup\{\ep>0:\, \ep\gamma (x,y)\leq \Gamma _\infty (x,y), \forall (x,y) \in \R ^2\}.
$$
Hence $\psi:=\Gamma _\infty -\ep _0\gamma$ has a zero minimum at some point $(x_0,y_0)$ and therefore
$0\leq (\partial _{xx}\psi+\partial _{yy}\psi+r\psi)(x_0,y_0)\leq (\tilde \lambda -\lambda _\infty)\Gamma _\infty(x_0,y_0)<0$.
This contradiction proves the claim \eqref{claim-ne}. As a result, we can choose $R>0$ large enough so that $r\leq -1$ on the
complement of $B_{R/2}$, and such that $-\frac {c^2} 4<\lambda
_{cut}$ (we recall that $c>c^*$ implies $-\frac {c^2}4<\lambda_\infty$).

By the parabolic comparison principle,  we have
\begin{equation}\label{controle-par-w}
0\leq \tilde n(t,x,y) \leq w(t,x,y),
\end{equation}
where $w(t,x,y)$ is the solution of the following linear problem --- obtained by dropping 
the nonlocal term and replacing $r(x,y)$ 
by $r_{cut}(x,y)$ in \eqref{eq-tilde-n}---
\begin{equation}\label{eq-w}
 \left\{\begin{array}{lll}
          \partial_t w-c\partial_x  w-\partial_{xx}  w-\partial_{yy} w=
          r_{cut}(x,y)w\vspace{3pt}\\
      w(0,x,y)=w_0(x,y):=M+Ae^{-\frac c 2 x}\Gamma _{cut}(x,y),
              \end{array}
\right.
\end{equation}
where $M:=\Vert n_0 \Vert _{L^\infty}$. Recalling that  $\frac{c^2}{4}+\lambda_{cut}>0$ and that $r_{cut}\leq 0$ on the complement of
$B_{R/2}$, we can choose $A>0$ large enough so that
$$
(\partial _t-c\partial _x -\partial _{xx}-\partial
_{yy}-r_{cut}(x,y))w_0(x,y)=A\left(\frac{c^2}4+\lambda
_{cut}\right)e^{-\frac c 2 x}\Gamma _{cut} (x,y)-r_{cut}(x,y)M
>0,$$
for all $(x,y)\in\mathbb R^2$. In other words, the initial data in
\eqref{eq-w} is a super solution of the parabolic equation in
\eqref{eq-w}, which implies that $t\mapsto w(t,x,y)$ is
decreasing for any $(x,y)\in\mathbb R^2$. As a result there is a function $\bar w(x,y)$ such
that
$$
w(t,x,y)\to \bar w(x,y)\quad\text{ as } t\to \infty, \qquad 0\leq
\bar w(x,y)\leq M+Ae^{-\frac c 2 x}\Gamma _{cut}(x,y).
$$

Let us prove that the above pointwise convergence actually holds
locally uniformly w.r.t. $(x,y)$, and that $\bar w$ solves
\begin{equation}\label{eq-pour-w-barre}
-c\partial _x \bar w -\partial _{xx}\bar w -\partial _{yy}\bar w
-r_{cut}(x,y)\bar w=0.
\end{equation}
Let $R'>0$ be given. Let $(t_n)$  be an arbitrary sequence such
that $t_n\to \infty$, and define $w_n(t,x,y):=w(t+t_n,x,y)$. $w_n$ then solves
$$
\partial _t w_n-c\partial _x w_n-\partial _{xx}w_n-\partial
_{yy}w_n-r_{cut}(x,y)w_n=0,
$$
that is a linear equation whose coefficients are bounded on
$(0,\infty)\times \R ^2$ uniformly w.r.t. $n$. By the interior
parabolic estimates \cite[Section VII]{Lie}, for a fixed $p>2$,
there is a constant $C_{R'}>0$ such that
$$
\Vert w_n \Vert _{W^{1,2}_{p}((1,2)\times B_{R'})}\leq C_{R'} \Vert
w_n\Vert_{L^p((0,3)\times B_{2{R'}})}\leq C_{R'} (3|B_{2{R'}}|)^{1/p}\Vert
w\Vert _{L^\infty((0,\infty)\times \R ^2)}<\infty.
$$
Since $p>2$, there is $0<\alpha<1$ such that the injection
$W^{1,2}_{p}((1,2)\times B_{R'}) \hookrightarrow
C^{\frac{1+\alpha}2,1+\alpha}\left([1,2]\times
\overline{B_{R'}}\right)$ is compact. Therefore there is a
subsequence $w_{\varphi(n)}$ which converges in
$C^{\frac{1+\alpha}2,1+\alpha}\left([1,2]\times
\overline{B_{R'}}\right)$, and converges weakly in
$W^{1,2}_{p}((1,2)\times B_{R'})$. The limit of $w_{\varphi(n)}$ has
to be $\bar w$, which is independent on the sequence
$t_n\to\infty$ and the extraction $\varphi$. Therefore
$w(t,\cdot,\cdot)\to \bar w(\cdot,\cdot)$,  in both
$C^{\frac{1+\alpha}2,1+\alpha}$ and $W^{1,2}_{p}$ locally
uniformly. Hence, the convergence $w(t,x,y)\to\bar w(x,y)$, as
$t\to \infty$, is locally uniform w.r.t. $(x,y)$. On the other
hand, the weak convergence in $W^{1,2}_{p}$ allows to pass to the
limit in equation \eqref{eq-w}, so that $\bar w$ actually solves
\eqref{eq-pour-w-barre}.

We claim that $\bar w\equiv 0$. Indeed define $\psi(t,x):=e^{\frac
c 2 x}\bar w(x,y)$ which solves
\begin{equation}\label{eq-pour-psi}
-\partial _{xx}\psi -\partial _{yy}\psi
-r_{cut}(x,y)\psi=-\frac{c^2}4\psi.
\end{equation}
Multiplying equation \eqref{vp-pb-cut} by $\psi$, equation
\eqref{eq-pour-psi} by $\Gamma _{cut}$ and integrating the
difference over the ball $B_{R'}$, we get
$$
\int _{B_{R'}} (\Gamma _{cut} \Delta \psi -\psi \Delta \Gamma _{cut}
)=\left(\frac{c^2}4+\lambda _{cut}\right)\int _{B_{R'}}\psi \Gamma
_{cut}.
$$
Applying the Stokes theorem implies
\begin{equation}\label{stokes}
\int _{\partial B_{R'}} \left(\Gamma _{cut} \frac{\partial
\psi}{\partial \nu} -\psi \frac{\partial \Gamma _{cut}}{\partial
\nu}\right)=\left(\frac{c^2}4+\lambda _{cut}\right)\int _{B_{R'}}\psi
\Gamma _{cut}.
\end{equation}
We want to let ${R'}\to \infty$ in the left hand side member. It is
easily seen that $\Gamma _{cut}$ also satisfy estimate
\eqref{bornegammainf} so that  $\Vert \Gamma _{cut}\Vert
_{L^\infty(\partial B_{R'})}\to 0$, as ${R'}\to \infty$. Since
$$
-\Delta\Gamma _{cut}=f(x,y):=(r_{cut}(x,y)+\lambda
_{cut})\Gamma _{cut}(x,y),
$$
the interior elliptic estimates \cite[Theorem 9.11]{Gil-Tru}
provide, for a fixed $p>2$,  some $C>0$ such that, for all
$P_0=(x_0,y_0)\in\R ^2$,
\begin{eqnarray*}
\Vert \Gamma_{cut}\Vert_{W^2_p(B(P_0,1))}&\leq& C\left(\Vert
\Gamma _{cut}\Vert _{L^p(B(P_0,2))}+\Vert f \Vert
_{L^p(B(P_0,2))}\right)\\
&\leq & C|B_2|^{1/p}\left(1+(\Vert r_{cut}\Vert_{L^\infty(\R
^2)}+|\lambda _{cut}|)\right)\Vert \Gamma _{cut}\Vert
_{L^\infty(\R ^2)}<\infty,
\end{eqnarray*}
where we have used the analogous of \eqref{bornegammainf} for
$\Gamma _{cut}$. Since $p>2$, there is $0<\alpha<1$ such that the
injection $W^{2}_{p}(B(P_0,1)) \hookrightarrow
C^{1+\alpha}(\overline{B(P_0,1)})$ is compact and therefore $\Vert
\nabla \Gamma _{cut}\Vert _{L^\infty(\overline{B(P_0,1)})}\leq C$,
for some $C$ independent of $P_0 \in \R^2$. As a result $\nabla
\Gamma _{cut} \in L^\infty(\R^2)$. Let us now deal with $\psi$ and
$\nabla \psi$ by using rather similar arguments. First, in view of
\eqref{eq-pour-psi}, we have $-\Delta\psi\leq- \psi$ outside
$B_{R'}$. It therefore follows from the elliptic comparison principle
that $\psi(x,y) \leq M e^{-(|x|+|y|)}$ outside $B_{R'}$, where
$M:=\Vert \psi \Vert _{L^\infty(\partial B_{R'})}\left(\min _{(x,y)\in
\partial B_{R'}}e^{-(|x|+|y|)}\right)^{-1}$. As a result
$\psi$ also satisfies an estimate analogous to
\eqref{bornegammainf}, precisely $\psi(t,x)\leq C e^{-(|x|+|y|)}$.
In particular $\Vert \psi \Vert _{L^\infty(\partial B_{R'})} \to 0$,
and we can reproduce the above argument to deduce that $\nabla
\psi \in L ^\infty(\R^2)$. Hence, from the estimates on $\Gamma_
{cut}$, $\nabla \Gamma _{cut}$, $\psi$, $\nabla \psi$, the left
hand side of \eqref{stokes} tends to zero as ${R'}\to \infty$.
Since $\frac{c^2}4+\lambda _{cut}>0$, this implies $\psi\equiv \bar w\equiv 0$.

As a result we have $w(t,x,y)\to 0$ as $t\to \infty$, locally
uniformly w.r.t. $(x,y)\in \R^2$. We claim that this convergence
is actually uniform w.r.t. $(x,y)\in \R^2$. Indeed, assume by
contradiction that there is $\ep >0$, $t_n \to \infty$,
$|x_n|+|y_n|\to \infty$, such that $w(t_n,x_n,y_n)\geq \ep$.
Define $w_n(t,x,y):=w(t,x+x_n,y+y_n)$ which solves
\begin{equation}\label{eq-shift-espace}
\partial _t w_n-c\partial _x w_n-\partial _{xx}w_n-\partial
_{yy}w_n=r_{cut}(x+x_n,y+y_n)w_n,
\end{equation}
that is a linear equation whose coefficients are bounded on
$(0,\infty)\times \R ^2$ uniformly w.r.t. $n$, since $r_{cut}\in
L^\infty(\R^2)$. Using the interior parabolic estimates and
arguing as above, we see that (modulo extraction) $w_n(t,x,y)$
converge to some $\theta(t,x,y)$ strongly in
$C^{\frac{1+\alpha}2,1+\alpha}_{loc}((0,\infty)\times \R ^2)$, and
weakly in $W^{1,2}_{p, loc}((0,\infty)\times \R ^2)$. Hence,
letting $n\to \infty$ into \eqref{eq-shift-espace}, we have
\begin{equation*}
\partial _t \theta -c\partial _x \theta -\partial _{xx} \theta -\partial
_{yy}\theta \leq -\theta,
\end{equation*}
so that $\theta (t,x,y)\leq C e^{-t}$ by the comparison principle.
In particular
$$
0=\lim _{t\to \infty}\theta (t,0,0)=\lim _{t\to \infty}\lim _n
w(t,x_n,y_n)\geq \liminf _n w(t_n,w_n,y_n)\geq \ep, $$ that is a
contradiction. Hence, $w(t,x,y)\to 0$ as $t\to \infty$, uniformly
w.r.t. $(x,y)\in \R^2$. In view of \eqref{controle-par-w}, the
same holds true for $\tilde n(t,x,y)$.
\end{proof}

\subsection{Persistence}

For slow climate shifts $0\leq c< c^*$, our result of persistence of
the population reads as follows.

\begin{theo}[Survival]\label{th:survival}
Assume that $r\in L^\infty_{loc}(\R^2)$ and $K\in
L^\infty((0,\infty)\times\R^3)$ satisfy \eqref{Assumption_r} and
\eqref{Assumption_K} respectively. Let Assumption
\ref{hyp-confined} hold. Assume that $n_0\not\equiv 0$ satisfies
\eqref{initial-data}. If $0\leq c < c^*$,  then, for any
nonnegative solution $n$ of \eqref{eq_n}, there exists a
nonnegative function $h$ such that
\begin{equation}\label{reste-qqch}
\int _{\R ^2} h (x,y)\,dx\,dy>0,\quad \int _\R h(0,y)\,dy>0,
\end{equation}
and
$$
\tilde n (t,x,y)=n(t,x+ct,y)\geq h(x,y)\,\text{ for all } t\geq
1,\, x\in \R,\, y \in \R.
$$
\end{theo}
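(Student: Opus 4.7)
I would argue by contradiction, combining the refined Harnack inequality (Theorem~\ref{th:harnack-mod}) with the linear instability of the zero state encoded by $c<c^*$, i.e.\ $a := -(\lambda_\infty + c^2/4) > 0$. I work throughout with $\tilde n(t,x,y) := n(t,x+ct,y)$, which solves \eqref{eq-tilde-n}, and with the nonlocal coefficient
\[\mathcal K(t,x,y) := \int_\R K(t,x+ct,y,y')\,\tilde n(t,x,y')\,dy'.\]
Integrating \eqref{eq-tilde-n} in $y$ gives the mass bound $\tilde N(t,x) := \int \tilde n(t,x,y)\,dy \leq N_\infty$ of \eqref{bornel1}, and feeding this into the Harnack scheme of Lemma~\ref{lem:tails} produces a uniform bound $\tilde n \leq C_\infty$. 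In particular $\mathcal K \in L^\infty$ and $\tilde n$ solves a linear parabolic equation with bounded coefficients, so the strong maximum principle yields $\tilde n(t,\cdot,\cdot) > 0$ for all $t>0$. To produce $h$ satisfying \eqref{reste-qqch} it will suffice to establish the asymptotic persistence $\liminf_{t\to\infty} \tilde n(t, x_*, y_*) \geq \eta_* > 0$ for some $(x_*, y_*)\in\R^2$, since the conclusion on $[1,T]$ follows from parabolic regularity and positivity, and the condition $\int h(0,y)\,dy > 0$ can then be arranged by choosing the base point.

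The key tool is a Lyapunov-type quantity based on the principal eigenfunction. Set $\Psi(x,y) := e^{cx/2}\Gamma_\infty(x,y)$. A direct computation using \eqref{vp-pb} gives the pointwise identity $-c\,\partial_x \Psi + \Delta \Psi + r\,\Psi = a\,\Psi$. The decay \eqref{bornegammainf} of $\Gamma_\infty$, at rate $\sqrt{-\lambda_\infty} > c/2$, combined with the tails of $\tilde n$ from Lemma~\ref{lem:tails}, makes $I(t) := \int_{\R^2} \tilde n\,\Psi\,dx\,dy$ finite with $I(0) > 0$. Integrating \eqref{eq-tilde-n} against $\Psi$ and using integration by parts yields the ODE
\[I'(t) = a\, I(t) - \int_{\R^2} \tilde n(t,x,y)\, \mathcal K(t,x,y)\, \Psi(x,y)\, dx\, dy.\]

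Suppose now, for contradiction, that $\liminf_{t\to\infty} \tilde n(t, x, y) = 0$ for every $(x, y)$, and pick $(x_*,y_*)$ and $t_n \to \infty$ with $\tilde n(t_n, x_*, y_*) \to 0$. Applying Theorem~\ref{th:harnack-mod} on $B((x_*, y_*), R)$ and letting $\delta \to 0$ gives $\sup_{B((x_*,y_*),R)} \tilde n(t_n,\cdot,\cdot) \to 0$, and choosing $R$ large enough to absorb the exponential tails of Lemma~\ref{lem:tails} upgrades this to $\|\tilde n(t_n,\cdot,\cdot)\|_\infty \to 0$. Parabolic regularity together with uniqueness for the limit linear Cauchy problem then propagate the smallness to any time window: $\sup_{s \in [0,T]} \|\tilde n(t_n+s,\cdot,\cdot)\|_\infty \to 0$, and hence $\sup_{s\in[0,T]} \|\mathcal K(t_n+s,\cdot,\cdot)\|_\infty \to 0$, for every $T>0$.

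The hardest step is to then derive a contradiction with $I(0) > 0$. The Lyapunov ODE alone only gives $I(t_n+T) \geq I(t_n)\,e^{(a-o(1))T}$, which does \emph{not} rule out $I(t_n), I(t_n+T) \to 0$ simultaneously, so one cannot close by a pure rate argument. The strategy I would pursue is to establish the quantitative uniform lower bound $\liminf_{t\to\infty} I(t) > 0$ by a dichotomy on the size of $\|\mathcal K(t,\cdot,\cdot)\|_\infty$: in the regime where it is small compared to $a$, the Lyapunov ODE forces $I$ to grow exponentially; conversely, in the regime where it is not small, the pointwise bound $\mathcal K \leq k^+ \tilde N$ together with Theorem~\ref{th:harnack-mod} forces $\tilde n$ to be uniformly bounded away from zero on a region of positive $\Psi$-measure, giving a direct positive lower bound on $I$. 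Making these alternatives quantitative and combining them along time should produce $\liminf_t I(t) > 0$, contradicting $I(t_n)\to 0$. This closing step is the central technical difficulty, and it is precisely here that the refined Harnack Theorem~\ref{th:harnack-mod} plays its decisive role.
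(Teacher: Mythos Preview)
Your Lyapunov approach via $I(t)=\int_{\R^2}\tilde n\,\Psi$ is genuinely different from the paper's argument, and it can be made to work, but two points deserve comment.

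First, the dichotomy you leave open does close, and more simply than you suggest. From the tail estimate \eqref{tails} one has $\tilde N(t,x)\le \tfrac{2C}{\mu}e^{-\mu|x|}$, so whenever $\|\mathcal K(t,\cdot)\|_\infty\ge a/2$ there exist $|x_0|\le R_1$ and $|y_0|\le R_2$ (with $R_1,R_2$ fixed) at which $\tilde n(t,x_0,y_0)\ge\eta_0>0$; Theorem~\ref{th:harnack-mod} with $\delta=\eta_0/2$ then forces $\min_{[-R_1,R_1]\times[-R_2,R_2]}\tilde n(t,\cdot)\ge\eta_0/(2C_H)$, hence $I(t)\ge I_0>0$. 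Thus the dichotomy reads: either $I(t)\ge I_0$, or $\|\mathcal K(t,\cdot)\|_\infty<a/2$ and therefore $I'(t)\ge(a/2)I(t)$. In particular $I(t)<I_0$ implies $I'(t)>0$, so $I$ can never cross below the level $\min(I(\bar t),I_0)$ for $t\ge\bar t$. This gives $\liminf_t I(t)>0$ directly, and the entire contradiction scaffolding you set up (the sequence $t_n$, the propagation of smallness on time windows, etc.) is unnecessary.

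Second, for comparison, the paper proceeds by a pointwise touching argument rather than an integral one. It replaces the generalized eigenfunction $\Gamma_\infty$ by the Dirichlet eigenfunction $\Gamma_R$ on a large rectangle $\Omega_R$ (so that $-\tfrac{c^2}{4}-\lambda_R>0$), sets $h(x,y)=\nu e^{-cx/2}\Gamma_R(x,y)$, and shows that $u=e^{cx/2}\tilde n$ can never touch $\nu\Gamma_R$ from above for $\nu$ small: at a first contact point the equation forces $\ep\le\int K\tilde n$, and one estimates the right side by the tail bound outside a larger rectangle $\Omega_M$ and by the classical parabolic Harnack inequality (with its time shift, compensated via the crude growth bound $\|n(t_0,\cdot)\|_\infty\le e^{r_{\max}/2}\|n(t_0-\tfrac12,\cdot)\|_\infty$) inside $\Omega_M$. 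This produces $h$ explicitly and avoids integration by parts against the unbounded weight $e^{cx/2}\Gamma_\infty$. Your route is more global and leans harder on Theorem~\ref{th:harnack-mod}; the paper's is more local, closer in spirit to a sliding/subsolution argument, and yields the barrier $h$ in one stroke.
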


\begin{rem}
Before proving the theorem, observe that the above result cannot
hold for a speed $\tilde c$ different from $c$. Indeed it follows
from the control of the tails \eqref{tails} that
$$
n(t,\tilde c \,t,y)\leq Ce^{-\mu|c-\tilde c|t}e^{-\mu|y|},
$$
 and then, $\int
_\R Ce^{-\mu|c-\tilde c|t}e^{-\mu|y|}\,dy \to 0$, as $t\to
\infty$. This indicates that, as stated in the introduction, the
species needs to follow the climate shift to survive.
\end{rem}

\begin{proof}  For $R>0$ define the rectangle
$$
\Omega _R:=\left\{(x,y):\,|x|<\frac R {B+1}, |y|< R\right\},
$$
and denote by $(\lambda_R,\Gamma_R)\in\mathbb R\times
C^\infty\left(\overline{\Omega _R}\right)$ the solution of the
 principal eigenvalue problem
\begin{equation}\label{vp-pb-rectangle}
\begin{cases}
-\partial_{xx}
\Gamma_R(x,y)-\partial_{yy}\Gamma_R(x,y)-r(x,y)\Gamma_R(x,y)=
\lambda_R\Gamma_R(x,y)  \quad\text{ for all  } (x,y)\in \Omega _R\vspace{3pt}\\
\Gamma _R(x,y)=0 \quad\text{ for all } (x,y) \in \partial \Omega _
R\vspace{3pt}\\
\Gamma_R(x,y) >0\quad \text{ for all } (x,y)\in \Omega _R,\quad
\Vert \Gamma _R \Vert _\infty=1.
 \end{cases}
 \end{equation}
Since $\lambda_R$ converges to $\lambda_\infty$ as $R\to\infty$,
we can select $R>0$ large enough so that (observe that $0\leq
c<c^*$ reads as $\lambda _\infty<-\frac{c^2}4$)
\begin{equation}\label{inegalite}
\ep:=-\frac{c^2}4-\lambda _R >0.
\end{equation}

Next, the control of the tails \eqref{tails} in Lemma
\ref{lem:tails} implies, that for any $t\geq 0$ and any $x$ such
that $|x|\leq \frac M{B+1}$,
\begin{eqnarray}
\int_{|y'|\geq M}K(t,x+ct,y,y')\tilde n (t,x,y')\,dy'
&=& \int_{|y'|\geq M}K(t,x+ct,y,y')n(t,x+ct,y')\,dy'\nonumber\\
&\leq & Ck^+ \int _{|y'|\geq M} e^{-\mu(|x|+|y'|)}\,dy'\nonumber\\
&\leq & C k^+ \int _{|y'|\geq M}e^{-\mu
|y'|}\,dy'\nonumber\\
&=& \frac{2C k^+}\mu e^{-\mu M}< \frac \ep 2,\label{controle-loin}
\end{eqnarray}
provide we select $M>R$ large enough.

Recall that $\tilde n$ solves \eqref{eq-tilde-n}. Since $r\in
L^\infty(\Omega _{M+1})$ and since \eqref{bornel1} shows that the
nonlocal term is uniformly bounded, we can apply the parabolic
Harnack inequality: there exists $C_H>0$ such that, for all $t\geq
1$,
\begin{equation}\label{harnack}
\max_{(x,y)\in\overline{\Omega _M} }\tilde n\left(t-\frac 1
2,x,y\right)\leq C_H \min_{(x,y)\in\overline{\Omega _M} }\tilde n
(t,x,y).
\end{equation}
Notice that $\Omega _R \subset \Omega _M$ so that, taking $0<\nu
<{C_H}^{-1}e^{-\frac{cR}{2(B+1)}}\Vert \tilde n(\frac 1
2,\cdot,\cdot)\Vert _{L^\infty(\Omega _R)}$, we get $\nu \Gamma
_R(x,y) < e^{\frac{cx}2}\tilde n (1,x,y)=u(1,x,y)$ for all
$(x,y)\in \overline{\Omega _R}$. Now assume that there is a time
$t_0 >1$, which we assume to be the smallest one, such that
\begin{equation}\label{contact}
\nu \Gamma _R(x_0,y_0)=e^{\frac{cx_0}2}\tilde
n(t_0,x_0,y_0)=u(t_0,x_0,y_0) \quad \text{ for some } (x_0,y_0)\in
\Omega _R,
\end{equation}
and derive a contradiction, which shall conclude the proof. Thanks to the definition of $t_0$,
$\tilde n(t,x,y)-e^{-\frac{cx}2}\nu \Gamma _R(x,y)$ restricted to
$(1,t_0]\times\Omega _R$ has a zero minimum value at
$(t_0,x_0,y_0)$. The maximum principle then yields
\begin{equation}\label{eq:contact_condition}
(\partial _t-c\partial _x -\partial _{xx}-\partial
_{yy})\left[\tilde n(t,x,y)-e^{-\frac{cx}2}\nu \Gamma
_R(x,y)\right](t_0,x_0,y_0)\leq 0.
\end{equation}
Combining equation \eqref{eq-tilde-n} for $\tilde n$, equation
\eqref{vp-pb-rectangle} for the principal eigenfunction $\Gamma
_R$ and the contact condition \eqref{eq:contact_condition}, we arrive at
$$
\tilde n(t_0,x_0,y_0)\left(-\int _\R K(t_0,x_0+ct_0,y_0,y')\tilde
n(t_0,x_0,y')\,dy'-\frac{c^2}4-\lambda _R\right)\leq 0,
$$
which, in view of \eqref{inegalite} and \eqref{controle-loin}
implies
\begin{equation}\label{to-be-contradicted}
\ep <\int _{|y'|\leq M} K(t_0,x_0+ct_0,y_0,y')\tilde
n(t_0,x_0,y')\,dy'\leq k^+ 2M \Vert \tilde n(t_0,\cdot,\cdot)\Vert
_{L^\infty(\R^2)}.
\end{equation}

Now, observe that
$$
\phi(t):=e^{r_{max} t }\|n(t_0- 1/ 2,\cdot,\cdot)\|_{L^\infty(\R
^2)}
$$
solves
$\partial_t\phi-\partial_{xx}\phi-\partial_{yy}\phi-r_{max}\phi=0$
on $[t_0-1/2,t_0]\times \mathbb R^2$, whereas $n(t,x,y)$ satisfies
$\partial_t n-\partial_{xx}n-\partial_{yy}n-r_{max}n\leq 0$. Since
$\phi(0)\geq n(t_0- 1/2,x,y)$ the parabolic comparison principle
implies
$$
n(t_0,x,y)\leq \phi(1/2)=e^{\frac 12 r_{max}}\|n(t_0- 1/
2,\cdot,\cdot)\|_{L^\infty(\R ^2)}.
$$
Noticing that $\|n(t,\cdot,\cdot)\|_{L^\infty(\R ^2)}=\|\tilde
n(t,\cdot,\cdot)\|_{L^\infty(\R ^2)}$, the above inequality
combined with \eqref{to-be-contradicted} implies
$$
\ep<k^+ 2M e^{\frac 12 r_{max}} \Vert \tilde
n(t_0-1/2,\cdot,\cdot)\Vert _{L^\infty(\R^2)},
$$
and therefore
\begin{eqnarray*}
\ep &<& k^+ 2M e^{\frac 12 r_{max}}
\max\left(\max_{(x,y)\in \overline{\Omega _M}}\tilde n(t_0-1/2,x,y),\sup_{(x,y)\notin \Omega _M}\tilde n(t_0-1/2,x,y)\right)\nonumber\\
&\leq&  k^+ 2M e^{\frac 12 r_{max}} \max\left(C_H\min_{(x,y)\in
\overline{\Omega _M}}\tilde n(t_0,x,y),\sup_{|x|\geq \frac M{B+1}
\text{or} |y|\geq M}Ce^{-\mu(|x|+|y|)}\right)\nonumber
\end{eqnarray*}
where we have used the Harnack estimate \eqref{harnack} and the
control of the tails \eqref{tails}. Using \eqref{contact} we end
up with
$$
\ep< k^+ 2M e^{\frac 12 r_{max}} \max\left(C_H e^{\frac
{cR}{2(B+1)}}\nu,Ce^{-\mu\frac M{B+1}}\right)
$$
which is a contradiction, provided we select $M>0$ large enough, and then $\nu>0$ small enough.
\end{proof}

\section{The environmental gradient case}\label{s:propagation}

In this section, we consider a growth function $r(x,y)$ satisfying
Assumption \ref{hyp-unconfined}, i.e. the unconfined case. Since
$r(x,y)=\bar r(y-Bx)$, the equation \eqref{eq_n} under
consideration is then written as
\begin{equation}\label{eq_n-invasion}
 \left\{\begin{array}{lll}
          \partial_t n(t,x,y)-\partial_{xx} n(t,x,y)-\partial_{yy} n(t,x,y)\vspace{3pt}\\
      \quad\quad =\displaystyle \left(\bar r(y-B(x-ct))-\int_{\mathbb
R}K(t,x,y,y')n(t,x,y')\,dy'\right)n(t,x,y)\vspace{5pt}\\
      n(0,x,y)=n_0(x,y).
         \end{array}
\right.
\end{equation}

We define
\begin{equation}\label{critical-speed-unconfined}
c^{**}:=\begin{cases}2\sqrt{-\lambda_\infty\frac{1+B^2}{B^2}} &
\text{ if }
\lambda_\infty< 0\vspace{3pt}\\
-\infty & \text{ if } \lambda_\infty \geq0
\end{cases}
\qquad \text{the critical speed in the unconfined case,}
\end{equation}
where $\lambda _\infty$ is the principal eigenvalue defined by
\eqref{vp-pb2}. In the whole section, we are then equipped with
$\lambda _\infty$, $\Gamma _\infty ^{1D}(z)$ satisfying \eqref{vp-pb2}, and $\lambda _\infty$, $\Gamma _\infty(x,y):=\Gamma _\infty ^{1D}(z)(y-Bx)$ satisfying
\eqref{vp-pb}.

\begin{rem} Recall that in the case where $r(x,y)=\bar r(y-Bx)=1-A(y-Bx)^2$,
 $A>0$, we have $\lambda_\infty=\sqrt{A(B^2+1)}-1$, and $\Gamma_\infty(x,y)=\exp\left(-\sqrt{\frac A{B^2+1}}\frac{(y-Bx)^2}2\right)$.
 Then $A(B^2+1)\geq 1$ implies $c^{**}=-\infty$, while  $A(B^2+1)<1$ implies
\begin{equation}\label{formula-critical-speed}
 c^{**}=2\sqrt{\left(1-\sqrt{A(B^2+1)}\right)\frac{1+B^2}{B^2}}.
\end{equation}
\end{rem}

\subsection{Invasion}\label{ss:invasion-unconfined}

For slow climate shifts $0\leq c< c^{**}$, we prove that the
population survives, and indeed propagate. We define
$$
\omega_x^-=-\sqrt{-\frac{4\lambda _\infty}{1+B^2}
-\frac{B^2}{(1+B^2)^2}c^2}+\frac{B^2}{1+B^2}c\,,
$$
$$
\omega_x^+=\sqrt{-\frac{4\lambda _\infty}{1+B^2}
-\frac{B^2}{(1+B^2)^2}c^2}+\frac{B^2}{1+B^2}c\,,
$$
which are the propagation speeds in space $x$ towards $-\infty$,
$+\infty$ respectively, in a sense to be made precise in the following theorem:

\begin{theo}[Survival and invasion]\label{th:invasion}

Assume that $r\in L^\infty_{loc}(\R^2)$ and $K\in
L^\infty((0,\infty)\times\R^3)$ satisfy \eqref{Assumption_r} and
\eqref{Assumption_K} respectively. Let Assumption
\ref{hyp-unconfined} hold. Assume that $n_0\not\equiv 0$ satisfies
\eqref{initial-data}. If $0\leq c < c^{**}$,  then there is a
function $\psi:\R\to\R$ with $\psi(+\infty)=0$ such that any
nonnegative solution $n$ of \eqref{eq_n-invasion} satisfies
\begin{equation}\label{par-dessus1}
\forall (t,x)\in [1,\infty)\times \R,\quad
\max\left(\|n(t,x,\cdot)\|_\infty,\int _\R n(t,x,y)\,dy\right)\leq
\psi(x-\omega_x^+ t),
\end{equation}
\begin{equation}\label{par-dessus2}
\forall (t,x)\in [1,\infty)\times \R,\quad
\max\left(\|n(t,x,\cdot)\|_\infty,\int _\R n(t,x,y)\,dy\right)\leq
\psi(-(x-\omega_x^- t)).
\end{equation}
Moreover, for any $0<\delta<\frac{\omega_x^+-\omega_x^-}2$, there
exists $\beta>0$ such that any nonnegative solution $n$ of
\eqref{eq_n-invasion} satisfies
\begin{equation}\label{minoration-n}
\forall t\in [1,\infty),\quad \int _\R n(t,\omega_x t,y)\,dy\geq
\beta.
\end{equation}
for any $\omega_x\in [\omega_x^-+\delta,\omega_x^+-\delta]$.
\end{theo}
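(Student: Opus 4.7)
Since $\int K\,n \geq 0$, the parabolic comparison principle allows us to drop the nonlocal term and bound $n$ from above by super-solutions of the linear equation $\partial_t u - \Delta u = \bar r(y - B(x - ct))\, u$. I would seek super-solutions of the form
\[
\phi^+(t,x,y) := C e^{-\mu(x - \omega_x^+ t)} e^{\alpha(y - B(x-ct))} \Gamma_\infty^{1D}(y - B(x-ct))
\]
with $\mu > 0$. Substitution and use of \eqref{vp-pb2} shows that the $(\Gamma_\infty^{1D})'$-terms vanish exactly when $\alpha = B(c - 2\mu)/(2(1+B^2))$, after which the super-solution condition reduces to the scalar inequality
\[
\omega_x^+ \;\geq\; \mu - \frac{B^2(c-2\mu)^2}{4\mu(1+B^2)} - \frac{\lambda_\infty}{\mu} =: f(\mu).
\]
Under $c < c^{**}$ one verifies that $f$ admits a positive minimiser $\mu^\star$, and a direct computation identifies $f(\mu^\star)$ with the explicit value of $\omega_x^+$ given in the statement. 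Fixing $\mu = \mu^\star$, choosing $C$ large enough thanks to \eqref{initial-data} and the exponential decay of $\Gamma_\infty^{1D}$ (as in \eqref{bornegammainf}), the comparison principle yields $n \leq \phi^+$ pointwise. Since $e^{\alpha z}\Gamma_\infty^{1D}(z)$ is bounded and integrable in $z$, this produces \eqref{par-dessus1} with $\psi(\xi) \sim e^{-\mu^\star \xi}$. The bound \eqref{par-dessus2} is obtained by the symmetric ansatz with $e^{\mu(x - \omega_x^- t)}$.

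\textbf{Part 2, step 1: a negative eigenvalue in a moving frame.} Fix $\omega_x \in [\omega_x^- + \delta, \omega_x^+ - \delta]$ and define $m(t,x,y) := n(t,\, x + \omega_x t,\, y + B(\omega_x - c) t)$; the double shift is chosen so that the growth rate becomes time-autonomous, and $m$ satisfies
\[
\partial_t m - \omega_x \partial_x m - B(\omega_x - c)\partial_y m - \Delta m = \Bigl(\bar r(y - Bx) - \int_\R K\, m(t,x,y')\,dy'\Bigr)m.
\]
Let $L_\omega$ denote the linear drift-diffusion operator on the left. The ansatz $\phi(x,y) = e^{-\mu x + \alpha y}\Gamma_\infty^{1D}(y - Bx)$, with $\alpha$ chosen to cancel the $(\Gamma_\infty^{1D})'$-contribution, produces an explicit family of eigenvalues $\lambda_\omega(\mu)$; an algebraic computation parallel to Part 1 shows $\min_\mu \lambda_\omega(\mu) < 0$ precisely for $\omega_x \in (\omega_x^-, \omega_x^+)$. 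By Proposition \ref{prop:lambda}, the corresponding Dirichlet principal eigenvalue $\lambda_R^{(\omega)}$ on a large square $\Omega_R$ inherits this negativity for $R$ large enough, with positive principal eigenfunction $\Gamma_R^{(\omega)}$.

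\textbf{Part 2, step 2: contact argument and main obstacle.} Choose $\nu > 0$ small enough that $\nu\Gamma_R^{(\omega)} \leq m(1,\cdot,\cdot)$ on $\Omega_R$, which is ensured by the a priori positivity of $m$ at $t = 1$ (obtained via the parabolic Harnack inequality exactly as in the proof of Theorem \ref{th:survival}). Assume, for contradiction, that $t_0 > 1$ is the first contact time at some point $(x_0, y_0) \in \Omega_R$. Writing the equation for $m - \nu\Gamma_R^{(\omega)}$ and applying the maximum principle at $(t_0, x_0, y_0)$ yields
\[
\int_\R K(t_0, \cdots)\, m(t_0, x_0, y')\,dy' \;\geq\; -\lambda_R^{(\omega)} > 0.
\]
I would then split this integral at $|y' - Bx_0| = M$: the tail $|y' - Bx_0| > M$ is controlled by Lemma \ref{lem:tails} (uniform exponential decay of $m$ in the gradient frame) and is arbitrarily small for $M$ large; the bulk $|y' - Bx_0|\leq M$ is controlled by the refined parabolic Harnack inequality Theorem \ref{th:harnack-mod}, applied to the uniformly bounded solution $m$, which forces $m(t_0, x_0, y')$ to be of order $\nu\Gamma_R^{(\omega)}(x_0, y_0) = \mathcal{O}(\nu)$ up to an arbitrarily small additive error. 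Both contributions can thus be made strictly smaller than $-\lambda_R^{(\omega)}$, a contradiction. Hence $m \geq \nu\Gamma_R^{(\omega)}$ on $[1,\infty) \times \Omega_R$, and integrating in $y$ (the $y$-shift leaves $\int dy$ invariant) gives \eqref{minoration-n} with $\beta := \nu\int \Gamma_R^{(\omega)}(0,y)\,dy > 0$; uniformity over $\omega_x \in [\omega_x^-+\delta,\omega_x^+-\delta]$ follows from continuous dependence of $(\lambda_R^{(\omega)}, \Gamma_R^{(\omega)})$ on $\omega_x$. The main obstacle is exactly this final contradiction: the simultaneous control of the tail and bulk contributions to the nonlocal integral is precisely where the tail estimate of Lemma \ref{lem:tails} and the refined Harnack inequality of Theorem \ref{th:harnack-mod} enter together and crucially.
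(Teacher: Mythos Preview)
Your proposal is correct and follows essentially the same strategy as the paper: linear super-solutions built from exponentially weighted principal eigenfunctions for the upper bounds, and a first-contact argument with a compactly supported Dirichlet sub-solution, combined with Lemma~\ref{lem:tails} and Theorem~\ref{th:harnack-mod} to control the nonlocal term, for the lower bound. The only substantive difference is presentational: the paper rotates to coordinates $(X,Y)$ aligned with the gradient $y=Bx$, which makes the construction separate cleanly into a free direction and a confined one (so the Dirichlet eigenfunction factors as $\cos(\pi X/(2R))\,\tilde\Gamma_R(Y)$ and the uniformity in $\omega_X$ is read off from the explicit gap $(\omega_X^2-{\omega_X^+}^2)/4$), whereas you stay in $(x,y)$ coordinates with a moving frame and a drift operator $L_\omega$; after the standard exponential conjugation removing the drift, your Dirichlet problem reduces to the paper's and your $\lambda_R^{(\omega)}\to\lambda_\infty+\tfrac{\omega_x^2}{4}+\tfrac{B^2(\omega_x-c)^2}{4}$, which is indeed negative exactly on $(\omega_x^-,\omega_x^+)$. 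Your appeal to continuous dependence for uniformity in $\omega_x$ is valid on the compact interval but less explicit than the paper's quantitative gap; otherwise the two arguments coincide.
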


\begin{rem}\label{supportprop}
As it can be seen in the proof, the population will follow the
optimal trait. Also the propagation speeds of the population along
the phenotypic trait $y$ towards $-\infty$, $+\infty$ are respectively
$$
\omega_y^-=B\omega _x^--Bc,\quad \omega_y^+=B\omega _x^+-Bc.
$$
\end{rem}

\begin{rem}\label{rem:migration-evolution}
In our unconfined model \eqref{eq_n-invasion}, the conditions are shifted by the climate  at a speed $c\geq 0$ towards large $x\in\mathbb R$. From Theorem \ref{th:invasion}, the population is able to follow the climate shift only if 
\[\omega_x^-\leq c\leq \omega_x^+.\]
One can check that the first inequality is always satisfied, while the second one is only satisfied if $c\leq 2\sqrt{-\lambda_\infty}<c^{**}$. Hence, if $c\in(2\sqrt{-\lambda_\infty},c^{**})$, the population survives despite its inability to follow the climate change: it only survives because it is also able to evolve to become adapted to the changing climate. Finally, one can notice that the threshold speed $2\sqrt{-\lambda_\infty}$ is similar to the definition of the critical speed $c^*$ in the confined case (see \eqref{critical-speed}), which makes sense, since in the confined case, the survival is only possible if the population succeeds to strictly  follow the climate change.
\end{rem}

\begin{proof}
Rather than working in the $(x,y)$ variables, let us write
$n(t,x,y)=v(t,X,Y)$ where $X$ (resp. $Y$) represents the direction
of (resp. the direction orthogonal to) the optimal trait $y=Bx$,
that is
\begin{equation}\label{cgtvar}
 X=\frac {x+By}{\rac},\quad Y=\frac{-Bx+y}{\rac}.
\end{equation}
In these new variables, equation \eqref{eq_n-invasion} is  recast
as
\begin{align}
& \partial_t v-\partial_{XX} v-\partial_{YY} v\nonumber\\
&\quad =\left(\bar r\left(\rac \,Y+Bct\right)-\int_{\mathbb R}v\left(t,\frac{
\frac{X-BY}\rac
+By'}\rac,\frac{-B\frac{X-BY}\rac+y'}\rac\right)\,dy'\right)v.\label{eq-v}
\end{align}
Observe that for ease of writing we have taken $K\equiv 1$, which
is harmless since $0<k^-\leq K \leq k^+$. Note also that defining
$\Gamma(Y):=\Gamma _\infty ^{1D}(\rac\,Y)$, we have
\begin{equation}\label{eigenfunction}
-\partial_{YY}\Gamma -\bar r \left(\rac\,
Y\right)\Gamma=\lambda_\infty \Gamma.
\end{equation}

\noindent {\bf The controls from above \eqref{par-dessus1} and
\eqref{par-dessus2}.} To prove \eqref{par-dessus1}, we are seeking
for a solution of
\begin{equation}\label{sol-lin}
\partial _t \psi-\partial _{XX} \psi -\partial_{YY} \psi-\bar
r\left(\rac \,Y+Bct\right) \psi=0,
\end{equation}
in the form
\begin{equation}\label{sursol}
 \psi(t,X,Y):=e^{-\lambda(X-\omega _X^+ t)}e^{-\nu(Y-\omega _Y^+
t)}\Gamma \left(Y-\omega _Y^+ t\right),
\end{equation}
with $\lambda >0$, $\nu >0$, $\omega_X^+>0$, $\omega _Y^+>0$. If
we choose $\lambda:=\omega _X^+ /2$ and $\nu:=\omega _Y^+/2$, then
\eqref{sol-lin} turns out to be equivalent to
\begin{equation}\label{eqGamma1}
\left(\frac{{\omega _X^+}^2}4+\frac{{\omega
_Y^+}^2}4\right)\Gamma \left(Y-\omega _Y^+
t\right)-\Gamma
_{YY}\left(Y-\omega _Y^+ t\right)-\bar r\left(\rac\, Y+Bct\right)\Gamma \left(Y-\omega _Y^+ t\right)=0.
\end{equation}
The combination of \eqref{eigenfunction} and \eqref{eqGamma1}
shows that $\psi$ is a solution of \eqref{sol-lin} if we select
\begin{equation}\label{speeds-invasion}
\omega_X ^+:=\sqrt{-4\lambda _\infty
-\frac{B^2}{1+B^2}c^2}=\sqrt{\frac{B^2}{1+B^2}\left({c^{**}}^2-c^2\right)},\quad
\omega _Y ^+:=\frac {-B}{\sqrt{1+B^2}}\,c.
\end{equation}

Now, since $v(0,\cdot,\cdot)$ is compactly supported we can choose
$M>0$ large enough so that $M\,\psi(0,X,Y)\geq v(0,X,Y)$ for all $(X,Y)\in\mathbb R^2$. In view
of \eqref{eq-v}, $\partial _t v-\partial _{XX} v -\partial_{YY}
v-\bar r\left(\rac \,Y+Bct\right) v\leq 0$, so that the parabolic
comparison principle yields
\begin{equation}\label{control-above}
v(t,X,Y)\leq M\psi(t,X,Y)\leq M  e^{-\frac{\omega _X^+}2(X-\omega
_X ^+t)}e^{-\frac{\omega _Y ^+}2(Y-\omega _Y^+ t)}.
\end{equation}
Noticing that
\begin{equation}\label{expressions}
\omega_x^+=\frac{\omega_X^+-B\omega_Y^+}{\rac},\quad
\omega_y^+=\frac{B\omega_X^++\omega_Y^+}{\rac},
\end{equation}
and going back to the original variables, we arrive at
\begin{equation}\label{control-above2}
n(t,x,y)\leq  M  e^{-\frac{\omega _x^ +}2(x-\omega _x
^+t)}e^{-\frac{\omega _y ^+}2(y-\omega _y^+ t)}=Me^{-\frac
12\sqrt{1+B^2}\omega _X^+(x-\omega _x^+t)}e^{-\frac 12\omega _y
^+(y-B(x-ct))},
\end{equation}
where we have used the relation $\omega_y^+=B\omega _x^+-Bc$
together with expressions \eqref{expressions} (notice also that
${\omega_x^+}^2+{\omega _y ^+}^2={\omega _X^+}^2+{\omega
_Y^+}^2$). Combining the control above  with the control of the
tails \eqref{tails}, one obtains \eqref{par-dessus1}.

Finally, by using
$$
 \psi(t,X,Y):=e^{\frac{\omega _X ^+}2(X+\omega _X^+ t)}e^{-\frac{\omega _Y ^+}2(Y-\omega _Y^+
t)}\Gamma \left(Y-\omega _Y^+ t\right),
$$
rather than \eqref{sursol} and using similar arguments, we prove
\eqref{par-dessus2}, remarking that
\begin{equation}\label{expressions-gauche}
\omega_x^-=\frac{-\omega_X^+-B\omega_Y^+}{\rac},\quad
\omega_y^-=\frac{-B\omega_X^++\omega_Y^+}{\rac}.
\end{equation}

\noindent {\bf The control from below \eqref{minoration-n}.} The
first step is to estimate the nonlocal term in \eqref{eq-v}. Let
$(t_0,X_0,Y_0)\in[1,\infty)\times \R^2$ be given, and the
corresponding $(x_0,y_0)$ obtained through the change of variable
\eqref{cgtvar}. We select $M>2$ such that $|y_0-B(x_0-ct_0)|\leq
M$. The control of the tails \eqref{tails} then implies
\begin{align}
& \int_{\mathbb R}v\left(t_0,\frac{ \frac{X_0-BY_0}\rac
+By'}\rac,\frac{-B\frac{X_0-BY_0}\rac+y'}\rac\right)\,dy'=\int_ \R n(t_0,x_0,y')\,dy'\nonumber\\
&\quad \leq
2M\max_{y\in[-M,M]}n\left(t_0,x_0,B(x_0-ct_0)+y\right)
+\int_{[-M,M]^c}Ce^{-\mu |y|}\,dy.\label{bidule}
\end{align}

In order to estimate the first term of the above expression, let us recall that the uniform boundedness of the solutions is known since Lemma \ref{lem:tails}. This allows to use the refinement of Harnack inequality, namely 
 Theorem~\ref{th:harnack-mod} with $\omega=\mathbb R\times \{0\}$ and $\delta=\frac{C}{2M\mu}e^{-\mu M}>0$ to $\tilde v(t,X,Y)=v\left(t,X,Y-\frac{Bc}{\sqrt{1+B^2}}t\right)$. $\tilde v$ indeed satisfies
 \begin{align*}
& \partial_t \tilde v+\frac{Bc}{\sqrt{1+B^2}}\partial_Y\tilde v-\partial_{XX} \tilde v-\partial_{YY} \tilde v\nonumber\\
&\quad =\left(\bar r\left(\rac \,Y\right)-\int_{\mathbb R} v\left(t,\frac{
\frac{X-BY}\rac
+By'}\rac,\frac{-B\frac{X-BY}\rac+y'}\rac-\frac{Bc}{\sqrt{1+B^2}}t\right)\,dy'\right)\tilde v,
\end{align*}
and there exists thus a constant $\tilde C_M>0$, depending on $M$, such that
\begin{align}
&\max_{(x,y)\in[-M, M]^2}n\left(t_0,x_0+x,B(x_0-ct_0)+y\right)\nonumber\\
&\qquad\leq \tilde C_M\min_{(x,y)\in[- M, M]^2}n\left(t_0,x_0+x,B(x_0-ct_0)+y\right)+\delta\nonumber\\
&\qquad\leq \tilde C_M n(t_0,x_0,y_0)+\delta,\label{bidule2}
\end{align}
which we plug into \eqref{bidule} to get
\begin{equation}\label{est-nonlocal}
 \int_{\mathbb R}v\left(t_0,\frac{
\frac{X_0-BY_0}\rac
+By'}\rac,\frac{-B\frac{X_0-BY_0}\rac+y'}\rac\right)\,dy'\leq
C_M v(t_0,X_0,Y_0))+\frac {3C} \mu
e^{-\mu M}.
\end{equation}

\medskip

Next, for $R>0$, let us consider $\tilde \lambda _R$, $\tilde \Gamma _R(Y)$ solving the \lq\lq one dimensional'' principal eigenvalue problem
\begin{equation}\label{pb-vp-Y}
\begin{cases}
-\partial_{YY}\tilde \Gamma_R-\bar
r\left(\rac\, Y\right)\tilde \Gamma_R=\tilde \lambda_R\tilde \Gamma_R  \quad\text{ in  }(-R,R)\vspace{3pt}\\
\tilde \Gamma _R=0 \quad\text{ on }  \partial ((-R,R))\vspace{3pt}\\
\tilde \Gamma_R >0\quad \text{ on }  (-R,R),\quad \tilde \Gamma _R(0)=1.
 \end{cases}
 \end{equation}
 We have $\tilde \lambda_R \to
\lambda_\infty$ as $ R\to\infty$. Defining
$$
\Gamma _R(X,Y):=\cos\left(\frac x R \frac \pi 2\right)\tilde \Gamma _R(Y),
$$
we get
\begin{equation}\label{pb-vp-XY}
\begin{cases}
-\partial_{XX} \Gamma_R-\partial_{YY}\Gamma_R-\bar
r\left(\rac\, Y\right)\Gamma_R=\lambda_R\Gamma_R  \quad\text{ in  }(-R,R)^2\vspace{3pt}\\
\Gamma _R=0 \quad\text{ on }  \partial ((-R,R)^2)\vspace{3pt}\\
\Gamma_R >0\quad \text{ on }  (-R,R)^2,\quad \Gamma _R(0,0)=1,
 \end{cases}
\end{equation}
where $\lambda _R=\tilde \lambda  _R+\frac{\pi ^2}{4R^2}\to \lambda _\infty$ as $R\to\infty$.

We then define, for  some $\omega _X$ to be specified later and
$\beta>0$ to be selected later,
\begin{equation}\label{def:psi}
 \psi_\beta(t,X,Y):=\beta e^{-\frac{\omega_X}2(X-\omega_Xt)}e^{-\frac{\omega_Y^+}2(Y-\omega_Y^+ t)}\Gamma_R(X-\omega_Xt,Y-\omega_Y^+ t),
\end{equation}
which satisfies
\begin{equation}\label{truc}
 \partial_t\psi_\beta-\partial_{XX}\psi_\beta-\partial_{YY}\psi_\beta-\bar r\left(\rac Y+Bct\right)\psi_\beta=\left(\frac{\omega_X^2}4+\frac{{\omega_Y^+}^2}4+\lambda_R\right)\psi_\beta.
\end{equation}
Since $\psi_\beta(t,\cdot,\cdot)$ is compactly supported and
$v(1,\cdot,\cdot)>0$, we can assume that $\beta>0$ is small enough so that
\begin{equation}\label{psi-v}
 \psi_\beta(1,\cdot,\cdot)<v(1,\cdot,\cdot).
\end{equation}
Assume by contradiction that the set $\{t\geq 1:\,\exists
(X,Y),\,v(t,X,Y)=\psi_\beta(t,X,Y)\}$ is nonempty, and define
$$
t_0:=\min\{t\geq 1:\,\exists
(X,Y),\,v(t,X,Y)=\psi_\beta(t,X,Y)\}\in (1,\infty).
$$
Hence, $\psi_\beta -v$ has a zero maximum value at some point
$(t_0,X_0,Y_0)$ which satisfies $t_0>1$ and $(X_0+\omega_X t_0,Y_0+\omega_Y^+t_0)\in(-R,R)^2$, since
$v(t_0,\cdot,\cdot)>0$. This implies that
$$
\left[\partial_t(\psi_\beta-v)-\partial_{XX}(\psi_\beta-v)-\partial_{YY}(\psi_\beta-v)-\bar
r\left(\rac\,
Y_0+Bct_0\right)(\psi_\beta-v)\right](t_0,X_0,Y_0)\geq 0.
$$
In view of the equation \eqref{eq-v} for $v$ and the equation
\eqref{truc} for $\psi_\beta$, we infer that
$$
\left[\frac{\omega_X^2}4+\frac{{\omega_Y^+}^2}4+
\lambda_R+\int_{\mathbb R}v\left(t_0,\frac{ \frac{X_0-BY_0}\rac
+By'}\rac,\frac{B\frac{X_0-BY_0}\rac+y'}\rac\right)\,dy'\right]
\psi_\beta(t_0,X_0,Y_0)\geq 0.
$$
Hence, using $\psi_\beta(t_0,X_0,Y_0)>0$ and estimate
\eqref{est-nonlocal}, we end up with
\begin{eqnarray}
0&\leq& \frac{\omega_X^2}4+\frac{{\omega_Y^+}^2}4+\lambda_R+C_M \psi _\beta(t_0,X_0,Y_0)+\frac {3C} \mu e^{-\mu M}\nonumber\\
&\leq&
\lambda_R-\lambda_\infty+\frac{\omega_X^2-{\omega_X^+}^2}4+C_M
\psi _\beta(t_0,X_0,Y_0)+\frac {3C}
\mu e^{-\mu M},\label{ineqlowerbound}
\end{eqnarray}
thanks to \eqref{speeds-invasion} and \eqref{def:psi}.

Now, let $\delta\in\left(0,\omega_X^+\right)$ be given. For all
$\omega _X$ --- appearing in \eqref{def:psi}--- such that
$|\omega_X|\leq \omega_X^+-\delta$, we have
\begin{equation}\label{coercivite}
 \frac{\omega_X^2-{\omega_X^+}^2}4\leq -\delta\frac{2\omega_X^+-\delta}4<0.
\end{equation}
Since $\lambda_R \to
\lambda_\infty$, we can successively select $R>0$, $M>2$ large enough and $\beta >0$ small enough so that
$$
|\lambda_R-\lambda_\infty|+\frac {3C} \mu e^{-\mu
M}+C_M \psi
_\beta(t_0,X_0,Y_0)\leq \frac{2\omega_X^+-\delta}8\delta.
$$
This estimate and \eqref{coercivite} show that
\eqref{ineqlowerbound} then leads to a contradiction.

As a result, we have shown that, for any
$\delta\in\left(0,\omega_X^+\right)=\left(0,\rac\frac{\omega
_x^+-\omega _x^-}2\right)$, there are $R>0$ and $\beta>0$ such that, for
any $|\omega_X|\leq \omega_X^+-\delta$,
$$
v(t,X,Y)\geq \beta
e^{-\frac{\omega_X}2(X-\omega_Xt)}e^{-\frac{\omega_Y^+}2(Y-\omega_Y^+
t)}\Gamma_R(X-\omega_Xt,Y-\omega_Y^+ t),
$$
for all $t\geq 1$, $X\in \R$, $Y\in \R$. Defining
$$
\omega_x:=\frac{\omega_X-B\omega_Y^+}{\rac},\quad
\omega_y:=\frac{B\omega_X+\omega_Y^+}{\rac}=B\omega _x-Bc,
$$
that is the analogous of expressions  \eqref{expressions}, we can
derive the analogous of \eqref{control-above2}, that is
$$
n(t,x,y)\geq \beta e^{-\frac{\omega _x}2(x-\omega
_xt)}e^{-\frac{\omega _y}2(y-\omega _yt)}\Gamma
_R\left(\frac{x+By}{\sqrt{1+B^2}}-\omega
_Xt,\frac{-Bx+y}{\sqrt{1+B^2}}-\omega _Y ^+ t\right).
$$
This in turn implies
$$
n(t,\omega _x t,y+\omega _y t)\geq \beta\, e^{-\frac{\omega
_y}2y}\Gamma _R\left(\frac B{\rac}\, y, \frac 1{\rac}\, y\right),
$$
which holds for any $\omega_x\in
\left[\frac{-(\omega_X^+-\delta)-B\omega_Y^+}{\rac},\frac{(\omega_X^+-\delta)-B\omega_Y^+}{\rac}\right]=
\left[\omega_x^-+\frac\delta\rac,\omega_x^+-\frac\delta\rac\right]$.
This estimate is then enough to prove
\eqref{minoration-n}.\end{proof}

\subsection{Extinction}

We state our result of extinction of the population for rapid
climate shifts $c> c^{**}$.
\begin{prop}[Extinction]\label{prop:extinction-unconfined}
Assume that $r\in L^\infty_{loc}(\R^2)$ and $K\in
L^\infty((0,\infty)\times\R^3)$ satisfy \eqref{Assumption_r} and
\eqref{Assumption_K} respectively. Let Assumption
\ref{hyp-unconfined} hold. Assume that $n_0$ is compactly
supported. If $c> c^{**}$,  then any nonnegative solution $n$ of
\eqref{eq_n-invasion} satisfies, for some $\gamma _0>0$,
\begin{equation}\label{machin}
\sup _{x\in \R}\,\int _\R n(t,x,y)\,dy=\mathcal O\left(e^{-\gamma
_0 t}\right)\to 0,\, \text{ as } t\to \infty.
\end{equation}
\end{prop}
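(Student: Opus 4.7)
The plan is to mimic the strategy of Proposition~\ref{prop:extinction} (extinction in the confined case), replacing the supersolution $e^{-cx/2}\Gamma_\infty$ used there by one adapted to the environmental gradient structure $r(x,y)=\bar r(y-Bx)$. Since $n\geq 0$ and $K\geq 0$, the parabolic comparison principle lets me drop the nonlocal competition and treat $n$ as a subsolution of the linear equation $\partial_t n-\Delta n\leq \bar r(y-B(x-ct))\,n$. Passing to the climate frame via $\tilde n(t,x,y):=n(t,x+ct,y)$, this becomes
\[
\partial_t \tilde n-c\,\partial_x \tilde n-\Delta \tilde n\leq \bar r(y-Bx)\,\tilde n.
\]

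The main step is to construct a supersolution of the form
\[
\psi(t,x,y):=K\,e^{-\gamma_0 t}\,e^{\alpha(y-Bx)}\,\Gamma_\infty^{1D}(y-Bx),
\]
for suitable constants $K,\alpha,\gamma_0>0$. Setting $z=y-Bx$ and using the eigenvalue equation~\eqref{vp-pb2}, a direct computation yields
\[
(\partial_t-c\,\partial_x-\Delta)\psi=\bigl[-\gamma_0+(1+B^2)\alpha^2+\bar r(z)+\lambda_\infty\bigr]\psi+\bigl[Bc-2(1+B^2)\alpha\bigr]K e^{-\gamma_0 t}\bigl(e^{\alpha z}\Gamma_\infty^{1D}(z)\bigr)'.
\]
The derivative factor in the second term has no sign, so the choice $\alpha:=\tfrac{Bc}{2(1+B^2)}$ is forced to make it vanish. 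The residual supersolution condition then reduces to
\[
\gamma_0\leq \frac{B^2 c^2}{4(1+B^2)}+\lambda_\infty,
\]
and the hypothesis $c>c^{**}$ is exactly the statement that this bound is positive; I fix some $\gamma_0>0$ satisfying it.

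Since $n_0$ has compact support and $(x,y)\mapsto e^{\alpha(y-Bx)}\Gamma_\infty^{1D}(y-Bx)$ is bounded below by a positive constant on any compact set, the constant $K$ can be chosen large enough to enforce $\psi(0,\cdot,\cdot)\geq n_0$ on $\mathbb R^2$. Both $\tilde n$ (bounded by Lemma~\ref{lem:tails}) and $\psi$ being bounded at each time, the parabolic comparison principle gives $\tilde n\leq \psi$ for all $t\geq 0$. Integrating in $y$ and substituting $z=y-Bx$ removes the dependence on $x$:
\[
\int_{\mathbb R} n(t,x,y)\,dy=\int_{\mathbb R}\tilde n(t,x-ct,y)\,dy\leq K\,I\,e^{-\gamma_0 t},\qquad I:=\int_{\mathbb R} e^{\alpha z}\Gamma_\infty^{1D}(z)\,dz,
\]
uniformly in $x\in\mathbb R$, which is \eqref{machin}.

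The main obstacle to check is the finiteness of the integral $I$, i.e.\ that $\Gamma_\infty^{1D}$ decays faster than $e^{\alpha|z|}$. This relies on the flexibility in Assumption~\ref{hyp-unconfined}: the confinement constant $\delta$ can be taken arbitrarily large. An ODE comparison argument on~\eqref{vp-pb2}, analogous to the one producing~\eqref{bornegammainf} in Proposition~\ref{prop:extinction}, then forces $\Gamma_\infty^{1D}(z)\lesssim e^{-\sqrt{(\delta-\lambda_\infty)/(1+B^2)}\,|z|}$ for $|z|$ large, with arbitrary exponential rate; choosing $\delta$ large enough makes this dominate $e^{\alpha|z|}$, so $I<\infty$.
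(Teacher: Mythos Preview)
Your proof is correct and rests on the same core idea as the paper's: construct a supersolution of the linear problem of the form (decaying exponential in $t$)$\times$(exponential weight in $z$)$\times\Gamma_\infty^{1D}(z)$, with the weight $\alpha=\frac{Bc}{2(1+B^2)}$ forced so as to kill the first-order term. Your computation is equivalent to the paper's, which reaches the same object via the rotated variables $(X,Y)$ of \eqref{cgtvar}: their $\varphi(t,X,Y)=e^{-\gamma t}e^{-\frac{\omega_Y^+}{2}(Y-\omega_Y^+t)}\Gamma(Y-\omega_Y^+t)$ becomes precisely $Ke^{-\gamma_0 t}e^{\alpha(y-B(x-ct))}\Gamma_\infty^{1D}(y-B(x-ct))$ in the original frame, which is your $\psi$ written for $\tilde n$.

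The one genuine difference is in the integration step. The paper, after obtaining the analogue of $n\leq C\bar n$, drops the factor $\Gamma$ (using $\Gamma\leq 1$), which leaves only a one-sided exponential bound in $y-B(x-ct)$; it must then split the $y$-integral and invoke the tail estimate \eqref{tails} of Lemma~\ref{lem:tails} to control the remaining half. You instead keep the $\Gamma_\infty^{1D}$ factor and use that, by Assumption~\ref{hyp-unconfined}, $\Gamma_\infty^{1D}(z)\lesssim e^{-\mu|z|}$ for \emph{any} $\mu>0$ (the one-dimensional analogue of \eqref{bornegammainf}), so that $I=\int_\R e^{\alpha z}\Gamma_\infty^{1D}(z)\,dz<\infty$ and the integral bound follows in one line, without appealing to Lemma~\ref{lem:tails}. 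This is a clean simplification; conversely, the paper's route has the small advantage of not needing to establish the super-exponential decay of $\Gamma_\infty^{1D}$ explicitly.
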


\begin{proof}
Just as in the first part of the proof of Theorem
\ref{th:invasion}, we introduce the change of variables
\eqref{cgtvar} and $v$ satisfying \eqref{eq-v}. We are seeking for
a solution $\varphi$ of \eqref{sol-lin} in the form
\begin{equation*}\label{sursol2}
 \varphi(t,X,Y):=e^{-\gamma t}e^{-\nu(Y-\omega _Y^+
t)}\Gamma \left(Y-\omega _Y^+ t\right),
\end{equation*}
with $\nu >0$, $\omega _Y^+>0$. If we choose $\nu:=\omega _Y^+/2$,
then $\varphi$ is a solution of \eqref{sol-lin} if and only if
\begin{equation}\label{eqGamma3}
\left(\frac{{\omega _Y^+}^2}4-\gamma\right)\Gamma
\left(Y-\omega _Y^+ t\right)-\Gamma
_{YY}\left(Y-\omega _Y^+ t\right)-\bar r\left(\rac\, Y+Bct\right)\Gamma \left(Y-\omega
_Y^+ t\right)=0.
\end{equation}
The combination of \eqref{eigenfunction} and \eqref{eqGamma3}
shows that $\varphi$ is a solution of \eqref{sol-lin} if we select
$\gamma={\omega_Y^+}^2+\lambda_\infty>0$. Since $v(0,\cdot,\cdot)$
is compactly supported we can choose $M>0$ large enough so that
$M\varphi(0,X,Y)\geq v(0,X,Y)$. In view of \eqref{eq-v}, $\partial
_t v-\partial _{XX} v -\partial_{YY} v-\bar r\left(\rac
\,Y+Bct\right) v\leq 0$ so that the parabolic comparison principle
yields
\begin{equation}\label{control-above3}
v(t,X,Y)\leq M\varphi(t,X,Y)\leq M e^{-\gamma t}e^{-\mu(Y-\omega
_Y^+ t)}\Gamma \left(Y-\omega _Y^+ t\right)\leq M e^{-\gamma
t}e^{-\mu(Y-\omega _Y^+ t)}.
\end{equation}
Then, if w.l.o.g. $c\geq 0$, we get, for $\alpha>0$ to be chosen
later,
\begin{eqnarray}
 \sup_{x\in\mathbb R}\int_{\R}n(t,x,y)\,dy&=& \sup_{x\in\mathbb R}\left(\int_{y\geq B(x-ct)-\alpha t}n(t,x,y)\,dy+\int_{y\leq B(x-ct)-\alpha t}n(t,x,y)\,dy\right).\label{est0}
\end{eqnarray}
We can estimate the first term of \eqref{est0} as follows
\begin{eqnarray*}
\int_{y\geq B(x-ct)-\alpha t}n(t,x,y)\,dy &\leq & \int_{y\geq B(x-ct)-\alpha t}v\left(t,\frac{x+By}{\rac},\frac{-Bx+y}{\rac}\right)\,dy\\
 &\leq& M\int_{y\geq B(x-ct)-\alpha t}e^{-\gamma t}e^{-\mu\left(\frac{-Bx+y}{\rac}-\omega _Y^+
t\right)}\,dy\\
&\leq& M \int_{y\geq B(x-ct)-\alpha t}e^{-\gamma t}e^{-\frac\mu{\rac}\left(y-B(x-ct)\right)}\,dy\\
&\leq& M \int_{\R_+}e^{-\gamma t}e^{-\frac\mu{\rac}(\tilde
y-\alpha t)}\,d\tilde y\leq M\frac\rac\mu e^{\frac{-\gamma}2t},
\end{eqnarray*}
if we choose $\alpha=\frac{\gamma\rac}{2\mu}$. Using the control
of the tails \eqref{tails}, we can estimate the second term of
\eqref{est0} by
\begin{eqnarray*}
\int_{y\leq B(x-ct)-\alpha t}n(t,x,y)\,dy&\leq&\int_{y\leq B(x-ct)-\alpha t}Ce^{-\mu|y-B(x-ct)|}\,dy\\
&\leq&\int_{\R_+}Ce^{-\mu\left(\tilde y+\alpha t\right)}\,d\tilde
y\leq \frac{C}\mu e^{-\mu\alpha t}.
\end{eqnarray*}
Then, \eqref{est0} becomes
$$
\sup_{x\in\mathbb R}\int_{\R}n(t,x,y)\,dy\leq
Ce^{-\min\left(\frac{\gamma}2,\mu\alpha\right)t},
$$
which proves the proposition.
\end{proof}

\section{Mixed scenarios}\label{s:mixed}

In this section, we consider a growth function $r(x,y)$ satisfying
Assumption \ref{hyp-mixed}, that is
\begin{equation}\label{r=rurc}
r(x,y)=\mathbf 1_{\R_-\times \R}(x,y)r_u(x,y)+\mathbf
1_{\R_+\times \R}(x,y)r_c(x,y),
\end{equation}
 where $r_c(x,y)$ satisfies Assumption \ref{hyp-confined} and $r_u(x,y)=\bar r_u(y-Bx)$
 satisfies Assumption \ref{hyp-unconfined}.  It follows from subsection \ref{subsection:eigenvalue_pb} --- see \eqref{vp-pb} and \eqref{vp-pb2}--- that we can define the
  principal eigenvalues $\lambda _\infty$, $\lambda _{u,\infty}$, and some principal eigenfunctions $\Gamma _\infty(x,y)$,
   $\Gamma _{u,\infty}(x,y)=\Gamma _{u,\infty}^{1D}(y-Bx)$ associated to $r$, $r_u$ respectively. In the sequel, for $\theta>0$, we shall
 use the following modified growth
functions,
\begin{equation}\label{def:rA}
r^\theta (x,y):=\max(r(x,y),-\theta),\quad
r_u^\theta(x,y):=\max(r_u(x,y),-\theta).
\end{equation}

We  also define the principal eigenvalues $\lambda^\theta_\infty$,
$\lambda^\theta_{u,\infty}$ and some principal eigenfunctions
$\Gamma^\theta_\infty(x,y)$, $\Gamma^\theta_{u,\infty}(x,y)=\Gamma
^{\theta,1D}_{u,\infty}(y-Bx)$ associated to $r^\theta$,
$r_u^\theta$ respectively. Using the analogous of \eqref{vp-pb2}
satisfied by $\Gamma _{u,\infty}^{1D}(z)$ and $\Gamma
_{u,\infty}^{\theta,1D}(z)$ and using the same arguments as those
used to prove \eqref{claim-ne}, we see that
\begin{equation}\label{cv-lambda}
\lambda ^\theta _{u,\infty} \nearrow \lambda
_{u,\infty}, \quad\text{ as }\theta \to \infty.
\end{equation}

\subsection{More preliminary results on the tails}

Let us first provide some estimates on the tails of the four
principal eigenfunctions defined above.

\begin{lem}[Tails of eigenfunctions from above]\label{lem:tail_Gamma_m} Assume that
$r\in L^\infty_{loc}(\R^2)$ satisfies \eqref{Assumption_r} and
Assumption \ref{hyp-mixed}. Then, for any $\mu>0$, there is
$C_\mu>0$ such that,  for all $(x,y)\in\R^2$,
\begin{equation}\label{1}
 \Gamma
_{u,\infty}(x,y)\leq C_\mu e^{-\mu |y-Bx|}, \quad
\Gamma_\infty(x,y)\leq C_\mu
e^{-\mu\max\left(|y-Bx|,x\right)}.\end{equation}
 On
the other hand, for any $\theta>2\max(\lambda _{u,\infty},\lambda
_\infty,0)$, there is $C_\theta>0$ such that, for all $(x,y)\in\R^2$,
\begin{equation}\label{2}
\Gamma_{u,\infty}^\theta(x,y)\leq C_\theta e^{-\sqrt{\frac{\theta}{2(1+B^2)}}
|y-Bx|}, \quad \Gamma_\infty^\theta(x,y)\leq C_\theta e^{-\sqrt{\frac{\theta}{2(1+B^2)}}
 \max(|y-Bx|,x)}.
\end{equation}
\end{lem}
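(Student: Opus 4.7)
The plan is to mimic the derivation of the estimate \eqref{bornegammainf} (via \eqref{etoile}): in each region where the zero-order coefficient of the eigenvalue equation is strongly negative, I will compare the eigenfunction with an explicit exponential super-solution, using the elliptic comparison principle for operators with positive zero-order term.

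I would first treat $\Gamma_{u,\infty}$, which, since $\Gamma_{u,\infty}(x,y)=\Gamma^{1D}_{u,\infty}(y-Bx)$, reduces to the one-dimensional problem \eqref{vp-pb2}. Given $\mu>0$, pick $\delta$ large enough so that $\delta-\lambda_{u,\infty}\geq(1+B^2)\mu^2$, and then let $R>0$ be the associated threshold from Assumption~\ref{hyp-unconfined}. On $\{z\geq R\}$, the function $\phi(z)=e^{\mu R}e^{-\mu z}$ dominates $\Gamma^{1D}_{u,\infty}\leq 1$ at $z=R$, tends to $0$ at $+\infty$, and satisfies $-(1+B^2)\phi''+(-\bar r_u-\lambda_{u,\infty})\phi\geq(-(1+B^2)\mu^2+\delta-\lambda_{u,\infty})\phi\geq 0$. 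Comparison yields $\Gamma^{1D}_{u,\infty}\leq\phi$ on $\{z\geq R\}$; a symmetric argument on $\{z\leq -R\}$, combined with the trivial bound on $[-R,R]$, gives the first estimate of \eqref{1}.

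For $\Gamma_\infty$ in the mixed case, I would derive two bounds and combine them. The first, $\Gamma_\infty(x,y)\leq C_\mu e^{-\mu|y-Bx|}$, rests on the observation that under Assumption~\ref{hyp-mixed} one has $r(x,y)\leq -\delta$ throughout $\{|y-Bx|\geq R'\}$ for $R'$ large enough: on $\{x\leq 0\}$ this follows directly from Assumption~\ref{hyp-unconfined}, while on $\{x\geq 0\}$ it follows from the elementary inequality $|x|+|y|\geq|y-Bx|/(1+B)$ combined with Assumption~\ref{hyp-confined} applied to $r_c$. The super-solution $\phi_1(x,y)=Ce^{-\mu|y-Bx|}$, for which $-\Delta\phi_1=-(1+B^2)\mu^2\phi_1$, then closes the argument exactly as above. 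The second bound $\Gamma_\infty(x,y)\leq C'_\mu e^{-\mu x}$ on $\{x\geq 0\}$ uses that on $\{x\geq R\}$ one has $|x|+|y|\geq R$, hence $r_c\leq -\delta$, combined with the super-solution $\phi_2(x,y)=C'e^{-\mu x}$. Taking the minimum of the two yields $\Gamma_\infty\leq C_\mu e^{-\mu\max(|y-Bx|,x)}$ globally, since $\max(|y-Bx|,x)=|y-Bx|$ whenever $x\leq 0$.

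For the truncated versions the argument is identical, with two simplifications: $r^\theta\equiv -\theta$ outside the compact set $\{r\geq -\theta\}$, and the hypothesis $\theta>2\max(\lambda_{u,\infty},\lambda_\infty,0)$ together with the monotonicity $\lambda^\theta\leq\lambda$ (which follows from $r^\theta\geq r$) yields $\theta-\lambda^\theta\geq\theta/2$. The specific choice $\mu:=\sqrt{\theta/(2(1+B^2))}$ is then the one that simultaneously makes both super-solutions admissible: $(1+B^2)\mu^2=\theta/2\leq\theta-\lambda^\theta$ handles $\phi_1$, and $\mu^2\leq\theta/2\leq\theta-\lambda^\theta$ (since $1+B^2\geq 1$) handles $\phi_2$. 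Estimates \eqref{2} follow. The only delicate step is the application of the elliptic comparison principle on the unbounded regions $\{|z|\geq R\}$, $\{x\geq R\}$: bounded sub-solutions of equations with positive zero-order coefficient, dominated on the boundary by super-solutions that decay at infinity, are dominated in the interior. This is of Phragm\'en--Lindel\"of type and can be carried out via a small-barrier argument analogous to the one implicit in the derivation of \eqref{etoile}.
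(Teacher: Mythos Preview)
Your approach is essentially identical to the paper's: exponential super-solutions plus elliptic comparison on the regions where the zero-order coefficient $-r-\lambda$ (resp.\ $-r^\theta-\lambda^\theta$) is bounded below by a large positive constant, exactly as in the derivation of \eqref{bornegammainf}. You supply considerably more detail than the paper does, including the explicit two-bound decomposition for $\Gamma_\infty$ and the verification that $\mu=\sqrt{\theta/(2(1+B^2))}$ works simultaneously for both barriers.

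One small slip: you write that $r^\theta\equiv -\theta$ ``outside the compact set $\{r\geq -\theta\}$''. In the mixed case this set is \emph{not} compact --- on $\{x\leq 0\}$ it contains the unbounded strip $\{|y-Bx|\leq R_\theta^u\}$. What you actually need (and what your argument uses) is only that $r^\theta=-\theta$ on $\{|y-Bx|\geq R'\}$ and on $\{x\geq R'\}$ for $R'$ large enough; both follow from the decay properties of $r_u$ and $r_c$ exactly as you argued for \eqref{1}. With that correction the proof is fine.
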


\begin{proof} Notice that a similar estimate for the confined
case was obtained in \eqref{bornegammainf}: the proof consisted in combining the fact that $r_c(x,y)\to -\infty$ as $|x|+|y|\to \infty$ with the elliptic comparison principle. Using
$r_u(x,y)\to -\infty$ as $|y-Bx|\to \infty$, and $r(x,y) \to
-\infty$ as $\max(|y-Bx|,x)\to\infty$, we obtain \eqref{1} in a
similar manner.

As far as \eqref{2} in concerned, let us only notice that
$r_u^\theta (x,y)\geq r_u(x,y)$, $r^\theta(x,y)\geq r(x,y)$, so
that $\lambda _{u,\infty}^\theta \leq \lambda _{u,\infty}$,
$\lambda _\infty ^\theta \leq \lambda _\infty$ and therefore
$\lambda _{u,\infty}^\theta -\theta \leq -\frac \theta 2$,
$\lambda _{\infty}^\theta -\theta \leq -\frac \theta 2$. These
inequalities are valid \lq\lq far away'' (in a appropriate sense
with respect to the considered case) and enable us to
reproduce again the argument in \eqref{bornegammainf}. Details are omitted.
\end{proof}

Next, we estimate from below the tails of the principal
eigenfunction $\Gamma _{u,\infty}^\theta(x,y)$.

\begin{lem}[Tails of the eigenfunction from below]\label{lem:modified_tails}
Assume that $r\in L^\infty_{loc}(\R^2)$ satisfies
\eqref{Assumption_r} and Assumption \ref{hyp-mixed}. Then there is $\theta_1>0$ such that, for any $\theta>\theta _1$, there is $C_\theta>0$ and such that, for all $(x,y)\in\R^2$,
\begin{equation}\label{lowerbound}
\Gamma_{u,\infty}^\theta(x,y)\geq C _\theta e^{-2\sqrt{\frac{2\theta}{1+B^2}} |y-Bx|}.
\end{equation}
\end{lem}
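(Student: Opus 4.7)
Since $r_u(x,y) = \bar r_u(y-Bx)$, the analogue of \eqref{vp-pb2} gives $\Gamma_{u,\infty}^\theta(x,y) = \Gamma_{u,\infty}^{\theta,1D}(y-Bx)$, so it suffices to prove the one-dimensional lower bound $\gamma(z) \geq C_\theta e^{-2\alpha|z|}$ with $\alpha := \sqrt{2\theta/(1+B^2)}$, for the positive, $L^\infty$-normalized function $\gamma := \Gamma_{u,\infty}^{\theta,1D}$ solving $L\gamma = 0$, where
\begin{equation*}
L\phi := -(1+B^2)\phi'' + V(z)\phi, \qquad V(z) := -\bar r_u^\theta(z) - \lambda_{u,\infty}^\theta.
\end{equation*}
The strategy is a tail-by-tail exponential subsolution argument, patched with a continuity estimate on a compact interval.

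The first task is to fix the constants \emph{before} letting $\theta$ vary. By monotonicity $r_u \leq r_u^\theta \leq r_{max}$ and the reverse monotonicity of the generalized principal eigenvalue, one has $-r_{max} \leq \lambda_{u,\infty}^\theta \leq \lambda_{u,\infty}$, so $|\lambda_{u,\infty}^\theta|$ is bounded uniformly in $\theta$. I fix any $\delta > \max(\lambda_{u,\infty},0)$, let $R>0$ be the associated constant from Assumption \ref{hyp-unconfined} so that $\bar r_u(z) \leq -\delta$ on $\{|z|\geq R\}$, and take $\theta_1$ large enough that for every $\theta > \theta_1$ one has both $\theta \geq \delta$ and $\theta \geq |\lambda_{u,\infty}^\theta|$. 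For such $\theta$ and $|z|\geq R$, $\bar r_u^\theta(z) = \max(\bar r_u(z),-\theta) \in [-\theta,-\delta]$, and consequently
\begin{equation*}
0 < c_0 := \delta - \lambda_{u,\infty} \; \leq \; V(z) \; \leq \; \theta + |\lambda_{u,\infty}^\theta| \; \leq \; 2\theta.
\end{equation*}

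Now set $\underline\gamma(z) := e^{-2\alpha z}$ on $[R,\infty)$. Since $(1+B^2)(2\alpha)^2 = 8\theta$, a direct computation yields $L\underline\gamma = (V(z) - 8\theta)\underline\gamma \leq (2\theta - 8\theta)\underline\gamma \leq 0$, so $\underline\gamma$ is a subsolution. Define $w := C\underline\gamma - \gamma$ with $C := \gamma(R)\,e^{2\alpha R}>0$, so $w(R)=0$, $Lw \leq 0$, and $w(z)\to 0$ as $z\to+\infty$ by Lemma \ref{lem:tail_Gamma_m}. If $\sup_{[R,\infty)} w$ were positive, continuity together with $w(R)=0$ and $w(+\infty)=0$ would force it to be attained at some interior $z^\star$, yielding
\begin{equation*}
0 \; \geq \; L w(z^\star) \; \geq \; V(z^\star)\,w(z^\star) \; \geq \; c_0\, w(z^\star) \; > \; 0,
\end{equation*}
since $w'(z^\star)=0$ and $w''(z^\star)\leq 0$ at an interior maximum, a contradiction. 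Hence $\gamma(z) \geq C e^{-2\alpha z}$ on $[R,\infty)$, and the symmetric construction with $\underline\gamma(z)=e^{2\alpha z}$ on $(-\infty,-R]$ yields the analogous bound with some $C'>0$ on the left tail. On the compact interval $[-R,R]$, positivity and continuity of $\gamma$ give a lower bound by some $c>0$, and since $e^{-2\alpha|z|}\leq 1$ there, $\gamma(z) \geq c\,e^{-2\alpha|z|}$. Taking $C_\theta := \min(C,C',c)$ finishes the proof. The main delicate point is fixing $\delta$ independently of $\theta$: this is what keeps the sign $V \geq c_0>0$ on $\{|z|\geq R\}$ robust as $\theta$ varies, so that the maximum principle on the two unbounded tails goes through without any additional work at infinity beyond the decay already provided by Lemma \ref{lem:tail_Gamma_m}.
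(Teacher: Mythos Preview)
Your proof is correct and follows essentially the same approach as the paper: construct an exponential subsolution on each tail and apply the comparison (maximum) principle, patching with positivity on a compact interval. The only cosmetic differences are that you carry out the argument in the one-dimensional reduction and fix $R$ independently of $\theta$ via a single $\delta$, whereas the paper works in two variables and chooses $R=R(\theta)$ so that $\bar r_u^\theta\equiv -\theta$ on the tail; both choices yield the same exponential rate and the same conclusion.
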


\begin{proof} Since $r_u$ satisfies Assumption \ref{hyp-unconfined}, there
exists $R>0$ such that $r_u(x,y)=\bar
r_u(y-Bx)\leq-\theta=r_u^\theta(x,y)$ as soon as $|y-Bx|\geq R$.
Moreover, $\Gamma _{u,\infty}^\theta$ only depending on $y-Bx$, there is $\delta
>0$ such that $\Gamma _{u,\infty}^\theta(x,y)\geq \delta$ for
$(x,y)$ such that $\vert y-Bx\vert =R$. Moreover since, for
$(x,y)$ such that $|y-Bx|>R$,
$$-\partial_{xx}\Gamma_{u,\infty}^\theta(x,y)-\partial_{yy}\Gamma_{u,\infty}^\theta=(\lambda_{u,\infty}^\theta-\theta)\Gamma_{u,\infty}^\theta(x,y)\geq -2\theta \Gamma_{u,\infty}^\theta(x,y),$$
for any $\theta>\theta _1$, with $\theta _1>0$ large enough. It therefore follows from the elliptic comparison principle that
$$
\Gamma_{u,\infty}^\theta(x,y)\geq \delta e^{-2\sqrt{\frac{2\theta}{1+B^2}}(|y-Bx|-R)},
$$
which concludes the proof.
\end{proof}

Finally, we provide a control of the tails of the solution of the Cauchy problem \eqref{eq_n}, which
is a extension (and an improvement) of Lemma~\ref{lem:tails} to the mixed case.

\begin{lem}[Exponential decay of tails of $n(t,x,y)$]\label{lem:tails2} Assume that $r\in L^\infty_{loc}(\R^2)$ and $K\in
L^\infty((0,\infty)\times\R^3)$ satisfy \eqref{Assumption_r} and
\eqref{Assumption_K} respectively. Let Assumption \ref{hyp-mixed}
hold.  Assume that $n_0$ satisfies
\eqref{initial-data}. Then, for any
$\mu>0$, there is $C>0$  such that, for any global nonnegative
solution $n$ of \eqref{eq_n},
\begin{equation}\label{tails-mixed} 0\leq n(t,x,y)\leq Ce^{-\mu\max\left(|y-B(x-ct)|,x-ct\right)},
\end{equation}
for all $t\geq 0$, $x\in \R$, $y \in \R$.
\end{lem}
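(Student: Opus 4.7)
The plan is to extend the strategy of Lemma~\ref{lem:tails} to the mixed case, combining a pointwise Harnack bound on a ``core'' region with an exponentially decaying super-solution outside, exploiting the compact support of $n_0$ to allow an arbitrary $\mu>0$. Working in the moving frame $\tilde n(t,x,y):=n(t,x+ct,y)$ (which satisfies \eqref{eq-tilde-n}), the target becomes
\[\tilde n(t,x,y)\le C_\mu\, e^{-\mu\max(|y-Bx|,\,x)}.\]
As in Lemma~\ref{lem:tails}, integrating \eqref{eq-tilde-n} in $y$ and comparing with a logistic ODE provides the mass bound $N(t,x):=\int_\R \tilde n(t,x,y)\,dy\le N_\infty$.

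Given $\mu>0$, I would set $\delta_\mu := cB\mu+\mu^2((1+B)^2+1)+1$ and, using Assumption~\ref{hyp-mixed} together with the fact that both $\bar r_u$ and $r_c$ tend to $-\infty$ outside large sets, pick $R_\mu>0$ such that $r(x,y)\le -\delta_\mu$ on $\Omega_R^c := \{(x,y):\max(|y-Bx|,x)\ge R_\mu\}$. On the complementary region $\Omega_R := \{|y-Bx|\le R_\mu,\ x\le R_\mu\}$, $r$ is uniformly bounded: on $\{x\le 0\}\cap\Omega_R$, $r_u=\bar r_u(y-Bx)$ depends only on the bounded quantity $y-Bx$, while $\{0\le x\le R_\mu\}\cap\Omega_R$ is a compact subset of $\R^2$. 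Combined with the mass bound, the refined Harnack inequality (Theorem~\ref{th:harnack-mod}) applied with $\omega=\Omega_R$ then yields a uniform pointwise bound $\tilde n(t,x,y)\le C_0$ on $[1,\infty)\times\Omega_R$, the estimate on $[0,1]$ following from $n_0\in L^\infty$ via a comparison with the linear equation $\partial_t w=\Delta w+r_{max}w$.

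To handle $\Omega_R^c$, I would use the time-independent candidate super-solution $\varphi(x,y):=\kappa\, e^{-\mu\Phi(x,y)}$ with $\Phi(x,y):=|y-Bx|+x_+$. The function $\Phi$ is convex and piecewise affine, with pieces delimited by $\{y=Bx\}\cup\{x=0\}$; on each piece, $|\nabla\Phi|^2\le(1+B)^2+1$ and $|\partial_x\Phi|\le 1+B$, so a direct computation yields
\[(\partial_t-c\partial_x-\Delta-r)\varphi = \varphi\bigl[c\mu\,\partial_x\Phi-\mu^2|\nabla\Phi|^2-r\bigr] \ge 0\]
on $\Omega_R^c$, by the choice of $\delta_\mu$. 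Convexity of $\Phi$ gives $\Delta\Phi\ge 0$ distributionally, so the singular contributions across $\{y=Bx\}$ and $\{x=0\}$ only strengthen the super-solution inequality. Picking $\kappa\ge C_0 e^{2\mu R_\mu}$ secures $\tilde n\le\varphi$ on $\partial\Omega_R$, and taking $\kappa$ further large (using the compact support of $n_0$) gives $\tilde n(0,\cdot)\le\varphi$ on $\Omega_R^c$. Since on $\Omega_R^c$ the zero-order coefficient $-r\ge\delta_\mu>0$ is bounded below and both $\tilde n$ and $\varphi$ are bounded, the parabolic comparison principle on the unbounded domain $\Omega_R^c$ yields $\tilde n\le\varphi$ on $[0,\infty)\times\Omega_R^c$. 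Since $\Phi\ge\max(|y-Bx|,x)$, combining with the core bound gives the lemma with $C_\mu$ essentially equal to $\kappa$.

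The main obstacle is to rigorously justify the super-solution computation across the singularities of $\Phi$. I would handle this by mollification: replace $\Phi$ by $\Phi_\eta:=\Phi*\rho_\eta$ for a smooth kernel $\rho_\eta$, so that, by convexity, $\Phi_\eta\to\Phi$ uniformly on compact sets while preserving the gradient bounds $|\nabla\Phi_\eta|^2\le(1+B)^2+1$ and $|\partial_x\Phi_\eta|\le 1+B$. The function $\kappa e^{-\mu\Phi_\eta}$ is then a classical super-solution on $\Omega_R^c$ — the ``$+1$'' in the definition of $\delta_\mu$ absorbing the $O(\eta)$ error coming from the perturbation of $\Phi$ and its gradient on a neighborhood of the singular sets — and one concludes by passing to the limit $\eta\to 0$.
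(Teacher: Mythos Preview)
Your approach is correct but organized differently from the paper's. The paper proceeds in two steps: first it repeats the Lemma~\ref{lem:tails} argument verbatim to obtain \emph{some} crude exponent $\bar\mu>0$ with $n(t,x,y)\le\bar C e^{-\bar\mu\max(|y-B(x-ct)|,\,x-ct)}$; then, given an arbitrary $\mu>0$, it uses this crude bound as boundary data on $\{|y-B(x-ct)|=R\}$ (resp.\ on $\{x-ct=R\}$) and compares with the two separate barriers $\kappa e^{-\mu(|y-B(x-ct)|-R)}$ and $\kappa e^{-\mu(x-ct-R)}$, choosing $\nu$ (and hence $R$) large enough so that each is a supersolution. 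You instead do everything in one pass: you let $R_\mu$ depend on $\mu$ from the start, obtain the core bound by Harnack, and compare with a single barrier built from the convex function $\Phi=|y-Bx|+x_+$. Your route is more direct and avoids the bootstrap; the paper's route avoids having to discuss the corner singularities of $\Phi$ and the associated mollification, since each of its barriers is smooth. Both exploit the same key fact, namely that $r\to-\infty$ as $\max(|y-Bx|,x)\to\infty$, so that $R$ can be chosen \emph{after} $\mu$.

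One small correction: to get the uniform bound $\tilde n\le C_0$ on the (unbounded) core $\Omega_R$ you should invoke the standard parabolic Harnack inequality (Theorem~\ref{thm:Harnack}) together with the mass bound, exactly as in the proof of Lemma~\ref{lem:tails}, rather than Theorem~\ref{th:harnack-mod}. The latter already requires a uniform $L^\infty$ bound on the solution as an input, which is precisely what you are trying to establish at this stage; the time-shifted version does not, and the shift is harmless here since the mass bound is uniform in $t$.
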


\begin{proof} By using arguments similar to those of Lemma~\ref{lem:tails} (for the bounded and unbounded cases), we see that there are $\bar \mu>0$ and $\bar C>0$ such that
\begin{equation}\label{est3}
 n(t,x,y)\leq \bar Ce^{-\bar \mu\max\left(|y-B(x-ct)|,x-ct\right)},
\end{equation}
for all $t\geq 0$, $x\in \R$, $y \in \R$. Details are omitted.

Next, let $\mu>0$ be given, and $\nu>0$ to be determined later. Thanks to
Assumption~\ref{hyp-mixed}, there exists $R>0$ such that
$\min\left(|y-B(x-ct)|,x-ct\right)>R$ implies $r(x,y)<-\nu$.
Similarly to the proof of Lemma~\ref{lem:tails}, we introduce
$\varphi(t,x,y):=\kappa e^{-\mu(|y-B(x-ct)|-R)}$, which
satisfies
$$
 \partial_t\varphi-\partial_{xx}\varphi-\partial_{yy}\varphi-r(x-ct,y)\varphi = \left(\pm\mu B c-\mu^2B^2-\mu^2+\nu\right)\varphi\geq 0,
 $$
for all $t\geq 0$, and $|y-B(x-ct)|>R$, provided we chose $\nu>0$
large enough. Thanks to \eqref{est3}, we can chose $\kappa$ large
enough for $n(t,x,y)\leq \varphi(t,x,y)$ to hold for all $t\geq 0$, and
$|y-B(x-ct)|=R$. Since moreover $n$ satisfies $\partial _t n-\partial _{xx}n-\partial _{yy}n-r(x-ct,y)n\leq 0$, the
parabolic maximum principle implies that $n(t,x,y)\leq
\varphi(t,x,y)$ for $t\geq 0$, and $|y-B(x-ct)|>R$. The same argument can be made with $\varphi(t,x,y):=\kappa e^{-\mu(|x|-R)}$
for $t\geq 0$ and $x\geq R$ with $R>0$ large enough, which
is enough to prove the lemma.
\end{proof}

\subsection{Extinction, survival, propagation}\label{subsection:theomixed}

Equipped with the principal eigenvalues $\lambda _\infty$,
$\lambda_{u,\infty}$, we adapt \eqref{critical-speed} and
\eqref{critical-speed-unconfined} by defining
\begin{equation*}\label{critical-speed2}
c^*:=\begin{cases}2\sqrt{-\lambda_\infty} & \text{ if }
\lambda_\infty< 0\vspace{3pt}\\
-\infty & \text{ if } \lambda_\infty \geq0,
\end{cases}
\end{equation*}
and
\begin{equation*}\label{critical-speed-unconfined2}
c^{**}_u:=\begin{cases}2\sqrt{-\lambda_{u,\infty}\frac{1+B^2}{B^2}}
& \text{ if }
\lambda_{u,\infty}< 0\vspace{3pt}\\
-\infty & \text{ if } \lambda_{u,\infty} \geq0.
\end{cases}
\end{equation*}
Since $-\lambda_{u,\infty}$ can be smaller that $-\lambda_\infty$,
it may happen that $ c_u^{**}\leq c^*$, in contrast with
 Section~\ref{s:propagation} where $c^*\leq c^{**}$ was always true. The last result of this study provides a qualitative description of the
dynamics of the population depending on the relative values of
$c^*$, $c_u^{**}$, and the speed $c$ of the climate shift.

\begin{theo}[Long time behavior in the mixed case]\label{th:mixed} Assume that $r\in L^\infty_{loc}(\R^2)$ and $K\in
L^\infty((0,\infty)\times\R^3)$ satisfy \eqref{Assumption_r} and
\eqref{Assumption_K} respectively. Let Assumption \ref{hyp-mixed}
hold.  Assume that $n_0\not\equiv 0$ satisfies
\eqref{initial-data}.  Let $n$ be a global nonnegative solution of
\eqref{eq_n}.

\begin{enumerate}[(i)]
\item Assume $\max(c^\ast,c_u^{**})<c$. Then the population gets
extinct exponentially fast. More precisely, there are $C>0$ and
$\gamma _0 >0$ such that
\begin{equation}\label{point1}
\sup_{x\in\R} \int_{\R}n(t,x,y)\,dy\leq Ce^{-\gamma_0 t}, \quad \forall t\geq 1.
\end{equation}

\item Assume $c_u^{**}<c<c^*$. Then the population survives and
follows the climate shift, but does not succeed to propagate. More precisely, there are  $\beta>0$, $C>0$ and
$\omega >0$ such that
\begin{equation}\label{point2-minoration}
\int _\R n(t,x+ct,y)\,dy\geq \beta, \quad \forall t\geq 1, \forall
x\in[-1,1],
\end{equation}
while
\begin{equation}\label{point2-majoration}
\int _\R  n(t,x+ct,y)\,dy\leq Ce^{\omega x}, \quad \forall
t\geq 1, \forall x\in \R.
\end{equation}

\item Assume $c^*<c<c_u^{**}$. Then the population survives, but
does not succeed to follow the climate shift. More precisely,
there are $\beta>0$, $C>0$, $\omega >0$, and $\gamma
>0$ such that
\begin{equation}\label{point3-minoration}
\int _\R n\left(t,x+\frac{B^2c}{1+B^2}t,y\right)\,dy\geq \beta,
\quad \forall t\geq 1, \forall x\in[-1,1],
\end{equation}
while
\begin{equation}\label{point3-majoration}
\int _\R n(t,x+ct,y)\,dy\leq Ce^{-\omega x}e^{-\gamma t}, \quad
\forall t\geq 1, \forall x\in \R.
\end{equation}

 \item Assume $c<\min(c^\ast,c_u^{**})$.  Then the population
survives with an increasing species' range. More precisely, there
is $\beta >0$ such that
 \begin{equation}\label{point4-minoration}
\min_{x\in\left[\frac{B^2c}{1+B^2}t,ct\right]}\int _\R
n\left(t,x,y\right)\,dy\geq \beta,  \quad \forall t\geq 1.
\end{equation}
\end{enumerate}

\end{theo}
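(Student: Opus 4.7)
My strategy is to treat the four cases separately, each being handled by assembling pieces of the analyses already carried out in Sections \ref{s:survival} and \ref{s:propagation}, glued together using the mixed-case tail estimate of Lemma \ref{lem:tails2}, the eigenvalue approximations of \eqref{cv-lambda} (and its analogue $\lambda_\infty^\theta\nearrow\lambda_\infty$), and the refined parabolic Harnack inequality of Theorem \ref{th:harnack-mod}, which is the essential tool for controlling the nonlocal competition in absence of a comparison principle. Throughout, I work in the moving frame $\tilde n(t,\xi,y):=n(t,\xi+ct,y)$, which has a time-independent growth rate $r(\xi,y)$.

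For part (i), I treat the two half-planes $\xi\geq 0$ and $\xi\leq 0$ separately. On $\xi\geq 0$ (confined region), the hypothesis $c>c^*$ allows one to take $\theta$ large enough so that $\gamma_1:=\lambda_\infty^\theta+c^2/4>0$; then $Me^{-\gamma_1 t}\Gamma_\infty^\theta$ is a linear super-solution for the $u$-transform $u=e^{c\xi/2}\tilde n$ of \eqref{change-var} (the nonlocal term being harmless for an upper bound). Returning to $\tilde n$ and integrating in $y$ using the tail bound of Lemma \ref{lem:tail_Gamma_m} gives an exponential $t$-decay of $\int\tilde n\,dy$ uniform in $\xi\geq 0$. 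On $\xi\leq 0$ (unconfined region), where $r=r_u$, the hypothesis $c>c_u^{**}$ enables the unconfined super-solution $\varphi$ of Proposition \ref{prop:extinction-unconfined}, which solves the full linear equation with $r_u$ there: parabolic comparison on this region (with boundary data at $\xi=0$ controlled by the bound already obtained on the right) yields the same exponential decay, from which \eqref{point1} follows.

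For the survival parts, the lower bounds \eqref{point2-minoration} and \eqref{point3-minoration} follow by direct adaptation of the proofs of Theorems \ref{th:survival} and \ref{th:invasion} respectively. In part (ii), the sub-solution $\nu\Gamma_R$ of Theorem \ref{th:survival} is built on a rectangle centered at the origin of the moving frame using the mixed principal eigenvalue $\lambda_\infty$, and the contact-point analysis carries over: the nonlocal competition is controlled by splitting it into a tail contribution (small by Lemma \ref{lem:tails2}) and a near-origin contribution (bounded by Theorem \ref{th:harnack-mod} combined with the uniform mass bound \eqref{bornel1}). In part (iii), the sub-solution of Theorem \ref{th:invasion} at the central speed $B^2c/(1+B^2)=(\omega_x^++\omega_x^-)/2$ has bounded support that remains in the region $\xi<0$ where $r=r_u$, so the construction applies verbatim. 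The corresponding upper bounds \eqref{point2-majoration} and \eqref{point3-majoration} are produced by the complementary extinction super-solutions: in (ii), $c>c_u^{**}$ gives decay on $\xi\leq 0$ via the super-solution of Proposition \ref{prop:extinction-unconfined}, coupled with \eqref{bornel1} on $\xi\geq 0$; in (iii), $c>c^*$ gives decay on $\xi\geq 0$ via a super-solution of the form $Me^{-\gamma t-c\xi/2}\Gamma_\infty^\theta$ as in Proposition \ref{prop:extinction}, coupled with Lemma \ref{lem:tails2} on $\xi\leq 0$.

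For part (iv), both survival regimes are present: the arguments of (ii) and (iii) yield positive lower bounds at $x=ct$ and at $x=B^2ct/(1+B^2)$ respectively. It remains to fill in the whole segment $[B^2ct/(1+B^2),ct]$, which I do by chaining the refined Harnack inequality of Theorem \ref{th:harnack-mod} along a finite family of overlapping balls covering the segment: the coefficients of the linear equation satisfied by $n$ on each such ball are uniformly bounded thanks to \eqref{bornel1} and Lemma \ref{lem:tails2}, and Theorem \ref{th:harnack-mod} (which avoids the time shift of the classical parabolic Harnack inequality) propagates the lower bound from one extremity to the other, yielding \eqref{point4-minoration}. The main difficulty throughout will be to ensure that the two localized half-plane constructions match up at $\xi=0$ for the upper bounds and that the purely confined or purely unconfined sub-solutions still operate correctly with the mixed growth rate at the origin, which requires careful use of the tail bounds and of the refined Harnack inequality to close the nonlocal-term estimates on the overlap.
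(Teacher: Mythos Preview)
Your treatment of parts (i)--(iii) follows the same architecture as the paper's proof, and the few places where your description is more schematic than the paper's (for instance, in (ii) the supersolution for \eqref{point2-majoration} is not literally that of Proposition~\ref{prop:extinction-unconfined} but a modification carrying an extra factor $e^{\omega X}$, with $\gamma$ tuned so that the boundary trace at $\xi=0$ is bounded uniformly in $t$) are matters of detail that can be filled in along the lines you indicate.

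Part (iv), however, contains a genuine gap. Your plan is to propagate the lower bound from the endpoints $x=ct$ and $x=\tfrac{B^2c}{1+B^2}t$ across the whole segment $\bigl[\tfrac{B^2c}{1+B^2}t,\,ct\bigr]$ by chaining Theorem~\ref{th:harnack-mod} over a finite family of overlapping balls. But the length of this segment is $\tfrac{c}{1+B^2}t$, which grows linearly in $t$; any covering by balls of fixed radius therefore requires $O(t)$ balls. Each application of Theorem~\ref{th:harnack-mod} degrades the lower bound by a fixed multiplicative factor $C>1$ (and an additive $\delta$), so after $O(t)$ iterations the resulting bound is of order $C^{-O(t)}$, which tends to $0$ and cannot yield a uniform $\beta>0$ as required in \eqref{point4-minoration}.

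The paper circumvents this by a global construction rather than a chain of local estimates: it builds a \emph{single} subsolution $\bar n$ on the moving slab $D=\{t\geq T,\ \tfrac{B^2ct}{1+B^2}\leq x\leq ct,\ |y-B(x-ct)|\leq R\}$, using only the one--dimensional Dirichlet eigenfunction $\tilde\Gamma_{u,R}$ in the transverse variable (so that $\bar n$ is not compactly supported in the longitudinal direction). The lateral boundary conditions at $x=ct$ and $x=\tfrac{B^2c}{1+B^2}t$ are supplied by the lower bounds already obtained in (ii) and (iii), and the Harnack refinement is invoked only \emph{once}, locally at a hypothetical contact point, to control the nonlocal term---not iterated across the slab. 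This is the idea your argument is missing.
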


\begin{rem}\label{remint}
 Notice that if $c<\min(c^*,c_u^{**})$, then the population will survive for $x\in\left[\frac{B^2c}{1+B^2}t,ct\right]$. The
  size of its range will then increase at a speed of at least $\frac c{1+B^2}$. Moreover, this speed will
   provide little information on $\max(c^\ast,c_u^{**})-c$, that is on the tolerance of the population to an increase
    of the climate change speed. The situation is then qualitatively different from the unconfined case, where the growth
     of the range of the population was directly linked to the difference
     $c^{**}-c$ (see subsection \ref{ss:invasion-unconfined} for
     details):
 \begin{eqnarray*}\omega_x^+-\omega_x^-&=&2\sqrt{-\frac{4\lambda _\infty}{1+B^2}
-\frac{B^2}{(1+B^2)^2}c^2}\\
&=&\frac {2B}{1+B^2}\sqrt{(c^{**})^2-c^2}.
\end{eqnarray*}
\end{rem}

\begin{proof}[Proof of $(i)$]
The first lines of the proof of Proposition \ref{prop:extinction} shows that
\eqref{controle1} remains valid here. Therefore, in view of \eqref{1}, for any $\mu>0$, there is $C_\mu >0$ such that, for all $(x,y)$,
\begin{equation}\label{est1}
e^{\frac{cx}2}n(t,x+ct,y)\leq
C_\mu e^{\left(-\lambda_\infty-\frac{c^2}4\right)t}e^{-\mu\max\left(|y-Bx|,x\right)}.
\end{equation}
Then, in particular,
\begin{equation}\label{est-tails}
n(t,ct,y)\leq C_\mu e^{\left(-\lambda_\infty-\frac{c^2}4\right)t}e^{-\mu|y|},\quad \textrm{ for }t\geq 0,\,y\in\R.
\end{equation}

Next, notice that $n$ satisfies
\begin{equation}\label{eq-demi-espace}
 \partial_t n-\Delta n\leq r(x-ct,y)n = r_u (x-ct,y)n\leq r_u^\theta (x-ct,y)n,\quad \textrm{ for } t\geq 0,\,x\leq ct,\,y\in\R.
\end{equation}
We now build a supersolution for \eqref{eq-demi-espace}, using an
approach similar to the one developed in the proof of Proposition
\ref{prop:extinction-unconfined}. Recall that $\omega_Y^+$ was
defined in \eqref{speeds-invasion}. Let $\theta>\max\left(2\lambda
_{u,\infty},2\lambda _\infty ,\theta
_1,\frac{B^2c^2}{64(1+B^2)},2{\omega_Y^+}^2\right)$ to be chosen
later. Define $\Gamma _u^\theta(Y):=\Gamma _u^{\theta,1D}(\rac\,
Y)$, which in turn implies $\Gamma _u^\theta(Y)=\Gamma
_{u,\infty}^{\theta}(x,y)$, with $(x,y)$ related to $(X,Y)$ by
\eqref{cgtvar}. Define
\begin{equation}\label{def:varphi}
\varphi(t,X,Y):=e^{-\gamma
t}e^{\frac{-\omega_Y^+}2(Y-\omega_Y^+t)}\Gamma_u^\theta(Y-\omega_Y^+
t),
\end{equation}
with
$\gamma>0$ to be chosen later.   Recall that $r_u^\theta (x,y)=\bar r _u ^\theta (y-Bx)$. We compute
$$
\partial _t\varphi -\Delta \varphi -\bar r _u^{\theta}\left(\rac\, Y+Bct\right)\varphi=\left(-\gamma +\frac{{\omega _Y ^+}^2}{4}+\lambda _{u,\infty} ^\theta\right)\varphi \geq 0,
$$
as soon as
\begin{equation}\label{def:bargamma}
\gamma\leq \bar\gamma:=
\frac{{\omega_Y^+}^2}4+\lambda_{u,\infty}^\theta=\frac{B^2(c^2-
(c_u^{**})^2)}{4(1+B^2)}+\left(\lambda_{u,\infty}^\theta-\lambda_{u,\infty}\right).
\end{equation}
Since $c^2- (c_u^{**})^2>0$ and
$\lim_{\theta\to\infty}\lambda_{u,\infty}^\theta=\lambda_{u,\infty}$, we have
$\bar \gamma>0$ provided we fix $\theta$ large enough. As a result
\begin{equation}\label{def:barn}
\bar
n(t,x,y):=\varphi\left(t,\frac{x+By}{\sqrt{1+B^2}},\frac{-Bx+y}{\sqrt{1+B^2}}\right)
\end{equation}
is the requested supersolution for \eqref{eq-demi-espace}, that is
\begin{equation}\label{eq:super_ru}
\partial_t \bar n-\Delta \bar n\geq r_u^\theta(x-ct,y) \bar n.
\end{equation}

Now, we take care of the line $x=ct$. Using the definition \eqref{speeds-invasion} of $\omega_Y^+$, we get
\begin{eqnarray}
 \bar n(t,ct,y)%&=&\varphi\left(t,\frac{ct+By}{\sqrt{1+B^2}},\frac{-Bct+y}{\sqrt{1+B^2}}\right)\nonumber\\
&=&e^{-\gamma
t}e^{\frac{-\omega_Y^+}2\left(\frac{-Bct+y}{\sqrt{1+B^2}}-\omega_Y^+t\right)}\Gamma_u^\theta
\left(\frac{-Bct+y}{\sqrt{1+B^2}}-\omega_Y^+ t\right)\nonumber\\
&=&e^{-\gamma t}e^{\frac{Bc\,y}{2(1+B^2)}}\Gamma_u^\theta\left(\frac{y}{\sqrt{1+B^2}}\right)\nonumber\\
&=&e^{-\gamma t}e^{\frac{Bc\,y}{2(1+B^2)}}\Gamma^\theta_{u,\infty}\left(0,\frac y\rac \right)\nonumber\\
&\geq&C_\theta \,e^{-\gamma
t}e^{\frac{Bc\,y}{2(1+B^2)}}e^{-2\sqrt{\frac{2\theta}{1+B^2}}|y|},\nonumber%\geq
%Ce^{-\gamma t}e^{-\left(\theta+\frac{Bc}{2\sqrt{1+B^2}}\right)
%\frac{|y|}{\sqrt{1+B^2}}},\label{eq:raccord}
\end{eqnarray}
in view of Lemma \ref{lem:modified_tails}. It follows from this and \eqref{est-tails} that the ordering
\begin{equation}\label{numero2}
n(t,ct,y)\leq C\,\bar
n(t,ct,y), \quad\text{ for all } t\geq 0, y\in \R,
\end{equation}
is guaranteed if
$\mu:=2\sqrt{\frac{2\theta}{1+B^2}}-\frac{Bc}{2(1+B^2)}>0$ (positivity is insured by $\theta\geq \frac{B^2c^2}{64(1+B^2)}$),
$\gamma:=\min(\bar\gamma,\frac{c^2}4+\lambda_\infty)>0$ (notice that $c>c^*$ implies  $\lambda_\infty+\frac{c^2}4>0$) and $C\geq \frac{C_\mu}{C_\theta}$.

Moreover, since $n_0(\cdot,\cdot)$ has a compact
support and $\bar n(0,\cdot,\cdot)>0$, we have
\begin{equation}\label{numero3}
n_0(x,y)\leq C\bar n(0,x,y), \quad \text{ for all } x\leq 0, y\in \R,
\end{equation}
if $C>0$ is large enough.

It follows from \eqref{eq-demi-espace}, \eqref{eq:super_ru},
\eqref{numero2}, \eqref{numero3} and the parabolic comparison
principle on $\{(t,x+ct,y);\,t\geq 0,\,x\leq 0,\,y\in\R\}$ that
for any $t\geq 0$, $x\leq 0$ and $y\in\R$,
\begin{eqnarray*}
n(t,ct+x,y)&\leq& C\,\bar n(t,ct+x,y)\\
&=&C e^{-\gamma t}e^{\frac{-\omega_Y^+}{2\sqrt{1+B^2}}(y-Bx)}\Gamma_{u,\infty}^\theta\left(x+ct+\frac{B}{\rac}\omega _Y^{+}t,y-\frac{\omega _Y ^{+}}{\rac}t\right),
\end{eqnarray*}
where we have used the expression \eqref{speeds-invasion} for $\omega _Y^+$. Combining again  \eqref{speeds-invasion} with \eqref{2}, we arrive at
\begin{eqnarray}
n(t,ct+x,y)
&\leq& C C_\theta e^{-\gamma t}e^{\frac{-\omega_Y^+}{2\sqrt{1+B^2}}(y-Bx)}e^{-\sqrt{
\frac{\theta}{2(1+B^2)}}\left|y-Bx\right|}\nonumber\\
&\leq& C C_\theta e^{-\gamma t}e^{-\frac 12 \sqrt{\frac \theta{2(1+B^2)}}\left|y-Bx\right|},\nonumber\label{eq-bornebarn}
\end{eqnarray}
using the fact that $\theta>2{\omega_Y^+}^2$.

The estimate \eqref{eq-bornebarn} for $x\leq 0$ and the estimate
 \eqref{est1} for $x\geq 0$ are enough to prove \eqref{point1}.
\end{proof}

\begin{proof}[Proof of $(ii)$]  The proof of \eqref{point2-minoration}, that
is of the survival of the population around $(t,ct,0)$ for $t\geq
0$ is similar to the proof of Theorem~\ref{th:survival}. It is
indeed possible to extend the proof of Theorem~\ref{th:survival}
to the present assumption on $r$, using Lemma~\ref{lem:tails2} to
estimate the tails of the density $n$. We skip the details of this
modification.

We now turn to the proof of the estimate
\eqref{point2-majoration}. Our approach will be similar to the
proof of Theorem~\ref{th:invasion}, more specifically, the proof
of estimates \eqref{par-dessus1}. For some $\theta>0$ to be
determined later, we seek a solution of
\begin{equation}\label{sol-linuA}
\partial _t \psi-\partial _{XX} \psi -\partial_{YY} \psi-\bar
r_u^\theta\left(\rac \,Y+Bct\right) \psi=0,
\end{equation}
in the form
\begin{equation*}
 \psi(t,X,Y):=e^{-\gamma t}e^{\omega X}e^{-\frac {\omega_Y^+}2(Y-\omega _Y^+
t)}\Gamma_u^\theta \left(Y-\omega _Y^+ t\right),
\end{equation*}
with $\gamma >0$, $\omega>0$ to be chosen. We see that $\psi$ is a solution of
\eqref{sol-linuA} if and only if
\begin{equation}\label{cond-sol-linuA}
-\gamma-\omega^2+\frac{{\omega
_Y^+}^2}4+\lambda_{u,\infty}^\theta=0.
\end{equation}

Next, we define
\begin{equation}\label{def:barn2}
\bar
n(t,x,y):=\psi\left(t,\frac{x+By}{\sqrt{1+B^2}},\frac{-Bx+y}{\sqrt{1+B^2}}\right),
\end{equation}
which is then a supersolution of \eqref{eq_n} as soon as \eqref{cond-sol-linuA} holds. As far as the line $x=ct$ is concerned, we have
\begin{eqnarray*}
 \bar n(t,ct,y)&=&e^{-\gamma t}
e^{\omega\left(\frac{ct+By}\rac\right)}
 e^{\frac{-\omega_Y^+}2\left(\frac{-Bct+y}{\sqrt{1+B^2}}-\omega_Y^+t\right)}\Gamma_u^\theta
\left(\frac{-Bct+y}{\sqrt{1+B^2}}-\omega_Y^+ t\right)\nonumber\\
&=&e^{\left(\frac{c\omega }{\rac}-\gamma \right)t+\left(\frac{\omega B-\omega_Y^+/2}\rac\right) y}\Gamma_{u,\infty}^\theta\left(0,\frac y{\rac}\right),\nonumber\\
\end{eqnarray*}
by using the definition \eqref{speeds-invasion} of $\omega _Y^{+}$.
In view of Lemma~\ref{lem:modified_tails}, this yields
\begin{equation}\label{ineqbarn2}
 \bar n(t,ct,y)\geq C_\theta e^{\left(\frac{c\omega }{\rac}-\gamma
\right)t}e^{-\left(2\sqrt{2\theta}+|\omega
B-\omega_Y^+/2|\right)\frac{|y|}{\sqrt{1+B^2}}}.
\end{equation}
We select $\gamma:=\frac{c\omega
}{\rac}$ and $\omega$ the positive solution of \eqref{cond-sol-linuA}.
%\[-\frac{c\omega }{\rac}-\omega^2+\frac{{\omega
%_Y^+}^2}4+\lambda_{u,\infty}^\theta=0,\]
There is a solution of the second order polynomial \eqref{cond-sol-linuA} provided $\theta>0$ is large enough, since its discriminant satisfies
$\Delta=\frac{c^2}{1+B^2}+\left({\omega_Y^+}^2+4\lambda_{u,\infty}^\theta\right)\to_{\theta\to\infty}\frac{c^2}{1+B^2}+\frac{B^2\left(c^2-{c_u^{**}}^2\right)}{1+B^2}>0$. We can also assume that $
\theta >8(\omega B+\omega _Y ^+/2)^2$ so that, in particular,
$2\sqrt{2\theta}>2|\omega B-\omega_Y^+/2|$, and then
thanks to Lemma~\ref{lem:tails2} and \eqref{ineqbarn2},
there exists a constant $C>0$ such that, for all $t\geq 0$ and
$y\in\R$,
\[n(t,ct,y)\leq C\bar n(t,ct,y).\]
The ordering at $t=0$ being obtained as in $(i)$ above, the
comparison principle implies that $n(t,x+ct,y)\leq C\bar
n(t,x+ct,y)=C
\psi\left(t,\frac{x+By}{\sqrt{1+B^2}},\frac{-Bx+y}{\sqrt{1+B^2}}\right)$,
for all $(t,x,y)\in \R_+\times\R_-\times\R$. Using the definition
of $\psi$, we arrive at
$$
n(t,x+ct,y)\leq Ce^{\omega\rac x}e^{\left(\frac{\omega B}{\rac}-\frac{\omega _Y ^{+}}{2\rac}\right)(y-Bx)}\Gamma _u ^{\theta}\left(\frac{-B(x+ct)+y}{\rac}-\omega _Y ^{+}t\right).
$$
Going back to $\Gamma _{u,\infty}^{\theta}$ and using estimate \eqref{2}, we end up with
\begin{eqnarray*}
n(t,x+ct,y)&\leq& CC_\theta e^{\omega\rac x}e^{\left(\frac{\omega B}{\rac}-\frac{\omega _Y ^{+}}{2\rac}\right)(y-Bx)} e^{-\sqrt{\frac{\theta}{2(1+B^2)  }}\vert y-Bx\vert}
\\
&\leq& CC_\theta e^{\omega\rac x} e^{-\frac 12\sqrt{\frac{\theta}{2(1+B^2)  }}\vert y-Bx\vert},
\end{eqnarray*}
using $\theta >2(2 \omega B+\omega _Y ^+)^2$. This estimates
proves \eqref{point2-majoration}.
\end{proof}

\begin{proof}[Proof of $(iii)$]
 The proof of Proposition \ref{prop:extinction} shows that \eqref{controle1} still holds true, and then, thanks
  to Lemma \ref{lem:tail_Gamma_m}, for any $\mu>0$, there exists $C_\mu>0$ such that, for any $(x,y)\in\R^2$,
\begin{eqnarray*}
e^{\frac{cx}2}n(t,x+ct,y)&\leq& C_\mu e^{\left(-\lambda_\infty-\frac{c^2}4\right)t}\Gamma_\infty(x,y)\nonumber\\
&\leq&
C_\mu e^{\left(-\lambda_\infty-\frac{c^2}4\right)t}e^{-\mu\max\left(|y-Bx|,x\right)},
\end{eqnarray*}
which is enough to prove \eqref{point3-majoration}.

We now turn to the proof of the survival of the population, with a
shift slower than the climate change, that is estimate \eqref{point3-minoration}. For ease of writing we take $K\equiv 1$, which
is harmless since $0<k^-\leq K \leq k^+$. The proof shares some arguments with that of \eqref{minoration-n}. First,
for a given $(t_0,x_0,y_0)\in[ 1,\infty)\times \R^2$, we need a control of the nonlocal term $\int _\R n(t_0,x_0,y')dy'$. We
claim that we can reproduce the arguments used to prove \eqref{est-nonlocal}. Indeed, the crucial control of the tails \eqref{tails}
in the unconfined case is replaced by \eqref{tails-mixed} in the present mixed scenario. Hence we can reproduce the proof
of subsection \ref{ss:invasion-unconfined}: we derive \eqref{bidule}, and apply Theorem~\ref{th:harnack-mod} to obtain \eqref{est-nonlocal}. As a result, for given $\mu >0$, there is $C>0$ (as in Lemma \ref{lem:tails2})  that ,
 for a given $(t_0,x_0,y_0)\in[1,\infty)\times \R^2$ and $M>2$ such that $\vert y_0-B(x_0-ct_0)\vert \leq M$, there is $C_M<\infty$, such that
\begin{equation}\label{estim-har2}
\int_\R n(t_0,x_0,y')dy'\leq C_M n(t_0,x_0,y_0)+\frac{3C}{\mu}e^{-\mu M}.
\end{equation}

Next, for $R>0$, proceeding as in the proof of Theorem
\ref{th:invasion} --- that is multiplying $\tilde \Gamma
_{u,R}(Y):=\Gamma_{u,R}^{1D}(\rac \, Y)$, which solves an
analogous of \eqref{pb-vp-Y}, by $\cos\left( \frac x R \frac \pi
2\right)$--- one can construct $\Gamma _{u,R}(X,Y)$ which solves
an analogous of \eqref{pb-vp-XY}, namely
\begin{equation}\label{pb-vp-XYbis}
\begin{cases}
-\partial_{XX} \Gamma_{u,R}-\partial_{YY}\Gamma_{u,R}-\bar
r _u\left(\rac\, Y\right)\Gamma_{u,R}=\lambda_{u,R}\Gamma_{u,R}  \quad\text{ in  }(-R,R)^2\vspace{3pt}\\
\Gamma _{u,R}=0 \quad\text{ on }  \partial ((-R,R)^2)\vspace{3pt}\\
\Gamma_{u,R} >0\quad \text{ on }  (-R,R)^2,\quad \Gamma
_{u,R}(0,0)=1,
 \end{cases}
\end{equation}
where $\lambda _{u,R}\to \lambda _{u,\infty}$ as $R\to\infty$.

For some $R>0$ and $\beta$ to be chosen later,
%Since $\lambda_{u,R}\to \lambda_u$ as $R\to\infty$, there exists $R>0$ such that $c<2\sqrt{-\lambda_{u,R}\frac{1+B^2}{B^2}}$ (we recall that $c<c_u^{**}=2\sqrt{-\lambda_{u,\infty}\frac{1+B^2}{B^2}}$).
the function
\[\varphi(t,x,y):=\beta e^{-\frac {Bc}{2\rac}\left(\frac{Bx-y}\rac-\frac {Bc}\rac t\right)}\Gamma_{u,R}\left(x-\frac{B^2ct}{1+B^2},y+
\frac{Bct}{1+B^2}\right)\] satisfies $\varphi(t,\cdot,\cdot)=0$ on
$\partial\left(\left(\frac{B^2ct}{1+B^2},-\frac{Bct}{1+B^2}\right)+(-R,R)^2\right)$,
and for
$(x,y)\in\left(\left(\frac{B^2ct}{1+B^2},-\frac{Bct}{1+B^2}\right)+(-R,R)^2\right)$,
some straightforward computations yield
\begin{align}
&\partial_t\varphi(t,x,y)-\Delta
\varphi(t,x,y)-r_u\left(x-\frac{B^2ct}{1+B^2},y+
\frac{Bct}{1+B^2}\right)\varphi(t,x,y)\nonumber\\
&=\left(\frac{B^2c^2}{4(1+B^2)}+\lambda_{u,R}\right)\varphi(t,x,y)\nonumber
\\
&=\left(\lambda_{u,R}-\lambda_{u,\infty}+\frac{B^2(c^2-(c_u^{**})^2)}{4(1+B^2)}\right)\varphi(t,x,y)\nonumber\\
&\leq
\frac{B^2(c^2-(c_u^{**})^2)}{8(1+B^2)}\varphi(t,x,y),\label{estim}
\end{align}
if we fix $R>1$ large enough since $c^2-(c_u^{**})^2<0$ and
$\lambda _{u,R}\to \lambda _{u,\infty}$ as $R\to\infty$. Now, observe that $r_u\left(x-\frac{B^2ct}{1+B^2},y+
\frac{Bct}{1+B^2}\right)=r_u(x-ct,y)$; also there is $T>0$ sufficiently large so that, for all $t\geq T$ and all
$(x,y)\in\left(\left(\frac{B^2ct}{1+B^2},-\frac{Bct}{1+B^2}\right)+(-R,R)^2\right)$, $x-ct\leq 0$ so that
$r_u(x-ct,y)=r(x-ct,y)$. As a result \eqref{estim} is recast as
\begin{equation}\label{estim2}
\partial_t\varphi(t,x,y)-\Delta
\varphi(t,x,y)-r\left(x-ct,y\right)\varphi(t,x,y)\leq  \frac{B^2(c^2-(c_u^{**})^2)}{8(1+B^2)}\varphi(t,x,y),
\end{equation}
for all $t\geq T$, all $(x,y)\in \left(\left(\frac{B^2ct}{1+B^2},-\frac{Bct}{1+B^2}\right)+(-R,R)^2\right)$.

We can assume that $\beta>0$ is small enough so that $
 \varphi(T,\cdot,\cdot)<n(T,\cdot,\cdot)$. Assume by contradiction that the set $\{t\geq T:\,\exists
(x,y),\,n(t,x,y)=\varphi(t,x,y)\}$ is non empty, and define
$$
t_0:=\min\{t\geq T:\,\exists
(x,y),\,v(t,x,y)=\varphi(t,x,y)\}\in (T,\infty).
$$
Hence, $\varphi -u$ has a zero maximum value at some point
$(t_0,x_0,y_0)$. This implies that
$$
\left[\partial_t(\varphi-n)-\Delta(\varphi -u)-r(x_0-ct_0,y_0)(\varphi-u)\right](t_0,x_0,y_0)\geq 0.
$$
 Combining \eqref{estim2} with \eqref{estim-har2}, we get
  $$
  0\leq \frac{B^2(c^2-(c_u^{**})^2)}{8(1+B^2)}+C_M\varphi (t_0,x_0,y_0)+\frac{2C}{\mu}e^{-\mu M}.
  $$
Selecting successively $M>2$ large enough and $\beta >0$ small enough,
we get $0\leq c^{2}-(c_u^{**})^2$, that is a contradiction. As a result, we have
\begin{equation}\label{pardessous}
n(t,x,y)\geq \beta e^{-\frac {Bc}{2\rac}\left(\frac{Bx-y}\rac-\frac {Bc}\rac t\right)}\Gamma_{u,R}\left(x-\frac{B^2ct}{1+B^2},y+
\frac{Bct}{1+B^2}\right),
\end{equation}
for all $t\geq T$, all $(x,y)\in \left(\left(\frac{B^2ct}{1+B^2},-\frac{Bct}{1+B^2}\right)+(-R,R)^2\right)$. Now, for a given $-1\leq x_0\leq 1$, the above yields
\begin{eqnarray*}
\int _\R n\left(t,x_0+\frac{B^{2}ct}{1+B^{2}},y\right)\,dy&\geq& \beta e^{-\frac{B^{2}c x_0}{\rac}}\int _\R e^{\frac{Bc}{1+B^{2}}\left(y+\frac{Bct}{\rac}\right)}\Gamma _{u,R}\left(x_0,y+\frac{Bct}{\rac}\right)\,dy \nonumber\\
&\geq&  \beta e^{-\frac{B^{2}c }{\rac}}Ó\min _{-1\leq x\leq 1}\int _\R e^{\frac{Bc}{1+B^{2}}z}\Gamma _{u,R}\left(x,z\right)\,dz,
\end{eqnarray*}
which concludes the proof of \eqref{point3-minoration}.
\end{proof}

\begin{proof}[Proof of $(iv)$] Let $R>0$ to be chosen later. Since $c<c_u^{**}$, we can follow the above proof of \eqref{point3-minoration}
and get \eqref{pardessous}, which in turn provides a small enough $\eta >0$ such that, for all $t\geq T$, all $\max(|x|,|y|)\leq R$,
\begin{equation}\label{machintruc}
n\left(t,x+\frac{B^2ct}{1+B^2},y-\frac{Bct}{1+B^2}\right)\geq \eta.
\end{equation}
Also, since $c<c^{*}$, we can follow the proof of \eqref{point2-minoration} (see also Theorem \ref{th:survival}) and get, up to reducing $\eta >0$, that for all $t\geq T$, all $\max(|x|,|y|)\leq R$,
\begin{equation}\label{machin2}
n\left(t,x+ct,y\right)\geq \eta.
\end{equation}

For $R>0$, $\tilde \Gamma
_{u,R}(Y):=\Gamma_{u,R}^{1D}(\rac \, Y)$ solves
\begin{equation}\label{pb-vp-Ydeux}
\begin{cases}
-\partial_{YY}\tilde \Gamma_{u,R}-\bar
r_u\left(\rac\, Y\right)\tilde \Gamma_{u,R}=\tilde \lambda_{u,R}\tilde \Gamma_{u,R}  \quad\text{ in  }(-R,R)\vspace{3pt}\\
\tilde \Gamma _{u,R}=0 \quad\text{ on }  \partial ((-R,R))\vspace{3pt}\\
\tilde \Gamma_{u,R} >0\quad \text{ on }  (-R,R),\quad \tilde \Gamma _{u,R}(0)=1,
 \end{cases}
 \end{equation}
 with $\tilde \lambda_{u,R} \to
\lambda_{u,\infty}$ as $ R\to\infty$.

Define
\begin{eqnarray*}
\bar n(t,x,y):&=&\beta e^{-\frac{\omega_Y^+}2\left(\frac{-Bx+y}{\sqrt{1+B^2}}-\omega_Y^+t\right)}\tilde \Gamma_{u,R}\left(\frac{-Bx+y}{\sqrt{1+B^2}}-\omega_Y^+t\right)\\
&=&\beta e^{-\frac{\omega_Y^+}2\left(\frac{y-B(x-ct)}{\sqrt{1+B^2}}\right)}\tilde \Gamma_{u,R}\left(\frac{y-B(x-ct)}{\sqrt{1+B^2}}\right),
\end{eqnarray*}
with $\beta >0$ to be chosen later. We aim at applying the comparison principle on the domain $D:=\{(t,x,y):\,t\geq T, \frac{B^2ct}{1+B^2}\leq x\leq ct, \vert y-B(x-ct)\vert \leq R\}$.

We have
\begin{eqnarray*}
\partial_t\bar n(t,x,y)-\Delta \bar n(t,x,y)-\bar r_u(y-B(x-ct))\bar n(t,x,y)
&=&\left(\frac{{\omega_Y^+}^2}4+\tilde \lambda_{u,R}\right)
\bar n(t,x,y)\\
&=&\left(\frac{B^2(c^2-{c_u^{**}}^2)}{1+B^2}+\tilde\lambda_{u,R}-\lambda_{u,\infty}\right)\bar n(t,x,y)\\
&\leq& \frac{B^2(c^2-{c_u^{**}}^2)}{1+B^2}\bar n(t,x,y) ,
\end{eqnarray*} provided $R>0$ is large enough.

Concerning the boundary of $D$, if $\vert y-B(x-ct)\vert =R$ then $\bar n(t,x,y)=0\leq n(t,x,y)$. If $x=ct$ then
$$
\bar n(t,ct,y)\leq \beta e^{\frac{\omega_Y^{+}}{2}\frac{R}{\rac}}\Vert \tilde \Gamma_{u,R}\Vert _\infty \leq \eta \leq n(t,ct,y),
$$
provided $\beta >0$ is small enough. If $x=\frac{B^{2}ct}{1+B^2}$, then
$$
\bar n\left(t,\frac{B^2ct}{1+B^2},y\right)\leq \beta e^{\frac{\omega_Y^{+}}{2}\frac{R}{\rac}}\Vert \tilde \Gamma_{u,R}\Vert _\infty \leq \eta \leq n\left(t,\frac{B^2ct}{1+B^2},y\right),
$$
provided $\beta >0$ is small enough.
If $t=T$, $n(T,x,y)>0$, an we then have $\bar
n(T,x,y)\leq n(T,x,y)$  for all
$\frac{B^2cT}{1+B^2}\leq x\leq cT$, $\vert y-B(x-cT)\vert \leq R$, provided $\beta >0$ is small enough.

By using again (details are omitted)  Theorem~\ref{th:harnack-mod}, as done in subsection \ref{ss:invasion-unconfined} and in the proof of $(iii)$ above, we deduce that
$$
n(t,x,y)\geq \bar n(t,x,y)=\beta e^{-\frac{\omega_Y^+}2\left(\frac{y-B(x-ct)}{\sqrt{1+B^2}}\right)}\tilde \Gamma_{u,R}\left(\frac{y-B(x-ct)}{\sqrt{1+B^2}}\right),
$$
for all $(t,x,y)$ such that $t\geq T$, $\frac{B^2ct}{1+B^2}\leq x\leq ct$, $\vert y-B(x-ct)\vert \leq R$. This is enough to prove \eqref{point3-minoration}.
\end{proof}

\section*{Acknowledgements}
The second author acknowledges support from the European Research Council under the European Union's Seventh Framework Programme (FP/2007-2013) /ERC Grant Agreement n. 321186 - ReaDi - "Reaction-Diffusion Equations, Propagation
and Modelling" held by Henri Berestycki. The third author acknowledges support from the ANR under grant  Kibord: ANR-13-BS01-0004, and MODEVOL: ANR-13-JS01-0009.

\signma
\signhb
\signgr

\end{document}